\documentclass[a4paper,11pt]{amsart}

\pdfoutput=1
\usepackage[text={400pt,660pt},centering]{geometry}

\usepackage{amsthm, amssymb, amsmath, amsfonts, mathrsfs,relsize}
\usepackage{mathtools}
\usepackage{scalerel} 
\usepackage[colorlinks=true, pdfstartview=FitV, linkcolor=blue, citecolor=blue, urlcolor=blue,pagebackref=false]{hyperref}

\usepackage{esint} 
\usepackage{MnSymbol} 
\usepackage{bbm}

\usepackage{setspace}
\usepackage{tikz}
\usepackage[compat=1.1.0]{tikz-feynman}
\usepackage{caption}
\usepackage{enumitem}
\setlist[itemize]{leftmargin=*}

\usepackage{microtype}

\definecolor{labelkey}{gray}{.8}
\definecolor{refkey}{gray}{.8}

\definecolor{darkgreen}{rgb}{0,0.5,0}
\definecolor{darkblue}{rgb}{0,0,0.7}
\definecolor{darkred}{rgb}{0.9,0.1,0.1}

\newcommand{\la}{\langle}
\newcommand{\ra}{\rangle}

\usepackage[titletoc,title]{appendix}
\usepackage{tikz} 
\usetikzlibrary{shapes,arrows,positioning,calc}

\tikzstyle{result} = [rectangle, 
minimum width=4cm, 
minimum height=1cm,  
text width=4cm, 
align=flush center,
font = \footnotesize, 
draw=black]

\tikzstyle{input} = [rectangle, 
minimum width=3cm, 
minimum height=1cm, 
text width=3cm, 
align=flush center,
font = \footnotesize,
draw=black]

\tikzstyle{arrow} = [thick, ->,>=stealth]

\newtheorem{proposition}{Proposition}
\newtheorem{theorem}[proposition]{Theorem}
\newtheorem{lemma}[proposition]{Lemma}

\theoremstyle{remark}
\newtheorem{remark}[proposition]{Remark}

\theoremstyle{definition}

\newtheorem{hypothesis}[proposition]{Hypothesis}

\numberwithin{equation}{section}
\numberwithin{proposition}{section}

\newcommand{\M}{\mathcal M}
\renewcommand{\le}{\leqslant}

\renewcommand{\leq}{\leqslant}
\renewcommand{\geq}{\geqslant}

\renewcommand{\subset}{\subseteq}

\newcommand{\F}{\mathcal{F}}

\renewcommand{\S}{\mathcal{S}}
\newcommand{\D}{{\mathbf{D}}}
\newcommand{\cc}{\mathbf{c}}

\newcommand{\E}{\mathbb{E}}
\newcommand{\Er}{\mathbb{E}_{\rho}}
\renewcommand{\Pr}{\mathbb{P}_{\rho}}

\renewcommand{\L}{\mathcal{L}}

\newcommand{\N}{\mathbb{N}}

\newcommand{\Ll}{\left}
\newcommand{\Rr}{\right}

\newcommand{\1}{\mathbf{1}}
\newcommand{\R}{\mathbb{R}}

\newcommand{\Z}{\mathcal{Z}}
\newcommand{\Zd}{{\mathbb{Z}^d}}
\newcommand{\Td}{\mathbb{T}^d}
\renewcommand{\P}{\mathbb{P}}
\newcommand{\ov}{\overline}
\renewcommand{\bar}{\overline}

\renewcommand{\tilde}{\widetilde}

\renewcommand{\d}{{\mathrm{d}}}

\renewcommand{\epsilon}{\varepsilon}
\newcommand{\T}{\mathbb{T}}

\newcommand{\X}{\mathcal{X}} 

\newcommand{\Ito}{It\^o\ }

\renewcommand{\r}{\mathbf{r}}

\newcommand{\x}{\boldsymbol x}

\newcommand{\Y}{\mathcal{Y}}

\newcommand{\norm}[1]{\left\Vert{#1}\right\Vert}

\title{Quadratic fluctuations of speed-change Kawasaki dynamics}
\author{Chenlin Gu, Baige Zhou}

\address[Chenlin Gu]{Yau Mathematical Sciences Center, Tsinghua University, Beijing, China}
\email{gclmath@tsinghua.edu.cn}  

\address[Baige Zhou]{Department of Mathematical Sciences, Tsinghua University, Beijing, China}
\email{zbg22@mails.tsinghua.edu.cn, baigezhou1@outlook.com}

\begin{document}

	\begin{abstract}
		
		For the speed-change Kawasaki dynamics, we study the weak convergence of its quadratic field, and derive the equilibrium fluctuation. This extends the result of Gon{\c{c}}alves and Jara [ALEA, Lat. Am. J. Probab. Math. Stat. 16, 605–632 (2019)] to the non-gradient case.

		\bigskip
		
		\medskip
		
		\noindent \textsc{Keywords:} interacting particle system, non-gradient process, equilibrium fluctuation, density field correlation.
		
	\end{abstract}
	\maketitle

    \begin{center}
        \textit{Dedicated to Claudio Landim on the occasion of his 60th birthday}
    \end{center}

	
	%
	%
	%
	%
	%
	%
	%
	%
	
	\section{Introduction}
	
	Our object is to understand the behavior of the equilibrium fluctuations of the non-gradient exclusion process. The linear fluctuations for the non-gradient exclusion process have been proved by Funaki in \cite{fun96}. The higher-order fluctuations have also attracted a lot of attention. For example, the quadratic fluctuations of SSEP have been shown by Gon{\c{c}}alves and Jara in \cite{GJ19}. This work aims to develop a parallel result in the non-gradient exclusion process.

	We briefly recall the necessary notation of the exclusion process and the results of the previous work. Let $\Zd$ be the Euclidean lattice, and we use $\X := \{0,1\}^{\Zd}$ to represent the space of the configuration of particles under the exclusion rule. The element of $\X$ will be denoted by $\eta = \left\{\eta(x): x \in \Zd \right\}$. Here $\eta(x) = 0$ means that the site $x$ is vacant and $\eta(x) = 1$ means that the  site is occupied by one particle. We denote by $x \sim y$ for $x,y \in \Zd$ if $\vert x - y\vert = 1$. Then $\{x,y\}$ is called an unoriented bond. For every $\Lambda \subset \Zd$, we denote by $\Lambda^*$ the bonds in $\Lambda$ that 
	\begin{align}\label{eq.defBond}
		\Lambda^* := \left\{ \{x,y\}: x,y \in \Lambda, x \sim y\right\}.
	\end{align} 
	
	For $x,y \in \Zd$, the exchange operator $\eta^{x,y}$ is defined as 
	\begin{align*}
		\eta^{x,y}(z) := \Ll\{\begin{array}{ll}
			\eta(z), & \qquad z \neq x,y; \\
			\eta(y), & \qquad z = x; \\
			\eta(x), & \qquad z = y.
		\end{array}\Rr.
	\end{align*}
	Especially, when $b = \{x,y\}$ is a bond, we also write $\eta^b$ instead of $\eta^{x,y}$, and define the Kawasaki operator $\pi_b \equiv \pi_{x,y}$ 
	\begin{align*}
		\pi_b F(\eta) := F(\eta^b) - F(\eta).
	\end{align*} 
	For every $x \in \Zd$, the translation operator $\tau_x$ is defined as 
	\begin{align*}
		(\tau_x \eta)(y) := \eta({x+y}),
	\end{align*}
	and given a function $F$ on $\X$, we also define $\tau_x F$ as 
	\begin{align*}
		(\tau_x F)(\eta) := F(\tau_x \eta).
	\end{align*}

	The \emph{speed-change exclusion process} on $\Zd$ is defined through the generator
	\begin{align}\label{eq.Generator}
		\L := \sum_{b \in (\Zd)^*} c_b(\eta) \pi_b =\sum_{i=1}^d\sum_{x \in \mathbb Z^d} c_
		{x,x+e_i}(\eta) \pi_{x,x+e_i},
	\end{align}
	where the family of functions
	\begin{align*}
		\left\{c_b(\eta) \equiv c_{x,y}(\eta) = c_{y,x}(\eta); \ b=\{x,y\} \in \left(\Zd\right)^*\right\},
	\end{align*}
	determines the jump rate of particles on the nearest bonds. This model is also called \emph{the speed-change Kawasaki dynamics} or \emph{the lattice gas} in the literature.
	
	The following usual conditions for the jump rate are assumed in the literature; see \cite{FGW24,FUY96}. They are also the setting throughout the paper without specific explanation.
	\begin{hypothesis} The following conditions are assumed for  $\{c_b\}_{b \in (\Zd)^*}$.
		\begin{enumerate}
			\item Non-degenerate and local: $c_{x,y}(\eta)$ depends only on $\{\eta_z: \vert z - x\vert \leq \r\}$ for some integer $\r > 0$, and is uniformly bounded from above and below $1 \leq c_{x,y}(\eta) \leq \lambda$.
			\item Spatially homogeneous: for all $\{x,y\} \in \left(\Zd\right)^*$, $c_{x,y} = \tau_x c_{0,y-x}$.
			\item Detailed balance under Bernoulli product measures: $c_{x,y}(\eta)$ is independent of $\eta(x)$ and $\eta(y)$.
		\end{enumerate}
	\end{hypothesis}
	This model is known to be of \emph{non-gradient} type, i.e. we cannot find functions $\{h_{i,j}\}_{1\leq i,j\leq d}$ such that
	\[
	c_{0,e_i} (\eta)(\eta({e_i}) - \eta(0) ) = \sum_{j=1}^d \Ll((\tau_{e_j} h_{i,j})(\eta) - h_{i,j}(\eta)\Rr),
	\]
	for general $\{c_{b}\}_{b \in (\Zd)^*}$, with $\{e_i\}_{1 \leq i \leq d}$ the canonical basis of $\Zd$. 
	
	\smallskip
	
    Our non-gradient process is defined on the torus. Let $\Td_N := (\mathbb{Z} / N \mathbb{Z})^d$ be the lattice torus of scale $N$, and we can define all the notation by replacing $\Zd$ with $\Td_N$. We denote by $\X^N := \{0,1\}^{\Td_N}$ the configuration space on $\Td_N$, and define 
	\[
	\eta^N_t := \left\{\eta^N_t(x), \ x \in \Td_N\right\},
	\]
	as the $\X^N$-valued Markov jump process on the torus governed by the generator 
    \begin{equation}\label{eq.L_N}
       	\L_{N}:=N^2\L, 
    \end{equation}
	the counterpart of \eqref{eq.Generator} on $\Td_N$. For equilibrium fluctuations, we fix $\rho\in(0,1)$ and consider $\eta^N$ to have an initial distribution of the Bernoulli product measure with density $\rho$. Since this distribution is reversible with respect to $\L_N$, the distribution of $\eta^N_t$ remains the same for all $t> 0$. We thus define the centered configuration similarly by 
	\begin{align*}
		\bar\eta_t^N:=\eta_t^N-\rho.
	\end{align*}
	The fluctuation $\Y^N_t(\d u)$  around the density $\rho$ is defined as follows: 
	\begin{align*}
		\Y^N_t(\d u): =N^{-\frac{d}{2}}\sum_{x \in \Td_N} \bar\eta^N_t(x)\delta_{x/N}(\d u).
	\end{align*}
	$\Y^N_t$ takes value in the Schwartz distribution on the torus $\S'\left(\Td\right)$. 
	As $N \to \infty$, it converges in the c\`adl\`ag topology $D\left([0,T], \S' \left(\Td\right)\right)$. This was proved by Funaki in \cite[Theorem~1]{fun96}, and we restate it here.
	\begin{proposition}\cite[Theorem~1]{fun96}\label{prop.linear fluctuation}
		The process $(\Y_t^N)_{t \in [0,T]}$ converges weakly as $N\to\infty$ in the space $D\left([0,T], \S' \left(\Td\right)\right)$. The limit is the Ornstein--Uhlenbeck process:
		\begin{equation}\label{first-order fluctuation}
			\d \Y_t=\mathrm{Tr}\left(\mathbf D(\rho)\partial^2 \Y_t\right)\,\d t+\sqrt{\cc(\rho)}\nabla\cdot\d\omega(t),
		\end{equation}
		where $\partial^2\Y=\{\partial_{i}\partial_{j}\Y\}_{1\leq i, j\leq d}$ and $\omega=\{\omega_i(t)\}_{1\leq i\leq d}$ is the $d$-dimensional space-time white noise. Moreover, the initial data $\Y_0$ is a white noise with intensity $\chi(\rho)$.
	\end{proposition}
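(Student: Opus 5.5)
The plan follows Funaki's strategy: a martingale decomposition, then a non-gradient replacement, then tightness and identification of the limit.

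\textbf{Martingale decomposition.} Fix a test function $f\in C^\infty(\Td)$ and apply Dynkin's formula to $\Y^N_t(f)$:
\begin{align*}
\Y^N_t(f)=\Y^N_0(f)+\int_0^t \L_N\Y^N_s(f)\,\d s+\M^N_t(f).
\end{align*}
Summation by parts gives
\[
\L_N\Y^N(f)=N^{1-d/2}\sum_{x,i}\partial_i^N f(x/N)\,\tau_x W_i,
\qquad W_i:=c_{0,e_i}(\eta)\bigl(\eta(0)-\eta(e_i)\bigr).
\]
Here the prefactor is $N^{1-d/2}$ rather than $N^{-d/2}$, and this is the core difficulty. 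Because $W_i$ is not a discrete gradient, we cannot gain a second factor $N^{-1}$ by summing by parts once more. The quadratic variation of $\M^N(f)$ converges in $L^1(\Pr)$ to $t\,\cc(\rho)\int|\nabla f|^2$ by the law of large numbers under the product measure, with $\cc(\rho)=\Er[c_{0,e_1}(\eta(0)-\eta(e_1))^2]$ up to normalisation. The martingale jumps are $O(N^{-d/2})$, so they vanish in the limit.

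\textbf{Non-gradient replacement (Varadhan's method).} The key step is a Boltzmann--Gibbs type principle. We show that there exist local functions $F$ such that
\[
W_i+\sum_j \mathbf D_{ij}(\rho)\bigl(\eta(e_j)-\eta(0)\bigr)-\L F
\]
is small in the $H_{-1}$ sense. Concretely, the aim is the bound
\begin{align*}
\Er\Bigl[\sup_{t\le T}\Bigl|\int_0^t N^{1-d/2}\sum_x \partial_i^N f(x/N)\,\tau_x\bigl(W_i+\textstyle\sum_j \mathbf D_{ij}(\eta(e_j)-\eta(0))-\L F\bigr)(\eta_s)\,\d s\Bigr|^2\Bigr]
\le C\,T\,\|\nabla f\|^2\,\epsilon(F),
\end{align*}
where $\inf_F\epsilon(F)=0$. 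This follows from the Kipnis--Varadhan inequality, which bounds the expectation by the $H_{-1}(\L_N)$ norm. It is combined with the structure theorem for closed forms under Bernoulli measures, due to Varadhan and Funaki--Uchiyama--Yau, and the latter is what characterises $\mathbf D(\rho)$ variationally.

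The term $\L F$ is absorbed as follows. First, $N^{-2}\L_N$ applied to the additive functional $N^{-d/2}\sum_x \partial_i^N f\,\tau_xF$ is negligible, because it is a boundary term of order $N^{-1}$. Second, it produces a correction martingale. In the limit this correction changes the martingale's quadratic variation from the naive value to $\cc(\rho)$, which is consistent with the fluctuation--dissipation relation $\cc=2\chi\mathbf D$. The remaining gradient term, after a second summation by parts, yields $\int_0^t\Y^N_s\bigl(\mathrm{Tr}(\mathbf D\partial^2 f)\bigr)\,\d s+o(1)$.

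This $H_{-1}$ estimate is the main obstacle. I would first prove the corresponding variance bound on finite boxes, using a spectral gap of order $\ell^{-2}$ for the speed-change dynamics, together with a two-block/renormalisation argument that is uniform in $N$. The central limit theorem variance characterisation from FUY96 would then identify the limit as $\epsilon(F)\to0$.

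\textbf{Tightness and identification.} Tightness in $D([0,T],\S'(\Td))$ follows from Mitoma's criterion. The martingale part is controlled by Doob's inequality. The integral terms are controlled by the bounds above and Aldous's criterion, since their second moments are $O(|t-s|)$ uniformly in $N$. For any limit point, $\Y_t(f)-\Y_0(f)-\int_0^t\Y_s(\mathrm{Tr}(\mathbf D\partial^2 f))\,\d s$ is a continuous martingale with quadratic variation $t\,\cc\|\nabla f\|^2$. This is the martingale problem for the stated Ornstein--Uhlenbeck equation, which is well posed, so the whole sequence converges. Finally, the initial law is the product Bernoulli$(\rho)$ measure, so the classical central limit theorem gives that $\Y_0$ is white noise with variance $\chi(\rho)=\rho(1-\rho)$.
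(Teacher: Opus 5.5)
The paper does not prove this proposition. It is quoted from \cite[Theorem~1]{fun96} and used as a black box, so there is no in-paper argument to match. Your outline is the standard route to this result: Dynkin's formula, the Kipnis--Varadhan bound, Varadhan's non-gradient decomposition from \cite{FUY96}, a corrector martingale, Mitoma's criterion, and uniqueness for the Ornstein--Uhlenbeck martingale problem. It is sound as a plan, and you correctly single out the $H^{-1}$ bound as the real work.

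\textbf{One claim is wrong.} Your first paragraph says that $\la\M^N(f)\ra_t$ converges to $t\,\cc(\rho)\int|\nabla f|^2$, with $\cc(\rho)=\Er[c_{0,e_1}(\eta(0)-\eta(e_1))^2]$ up to normalisation. This fails for non-gradient rates. By the law of large numbers the raw quadratic variation converges to $t\sum_i\Er[c_{0,e_i}(\eta(0)-\eta(e_i))^2]\int(\partial_i f)^2$. In the notation of \eqref{eq.c.form} this is $t\int\nabla f\cdot\cc(\rho;0)\nabla f$. But $\cc(\rho)=\inf_F\cc(\rho;F)$ is in general strictly smaller, and that difference is exactly the non-gradient effect.

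\textbf{One step is asserted, not argued.} You later attribute the fix to the correction martingale, which is the right mechanism. Let $G_F=N^{-1-d/2}\sum_x\sum_i\partial_i^N f(x/N)\,\tau_xF_i$. The martingale $\M^N(f)-M^{G_F}$ of $\Y^N(f)-G_F$ then has quadratic variation tending to $t\int\nabla f\cdot\cc(\rho;F)\nabla f$, up to the sign convention for $F$. However, ``consistent with the fluctuation--dissipation relation'' does not show that the limit is $\cc(\rho)$. You need that every sequence $F_k$ with $\epsilon(F_k)\to0$ is also a minimizing sequence for $\cc(\rho;\cdot)$, with the same $\D=\cc/(2\chi)$ appearing in the drift. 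This follows from the orthogonality of gradients to the closure of $\{\L F\}$ in Varadhan's variance space, and it must be invoked explicitly. You then conclude via the double limit $N\to\infty$, $k\to\infty$, using that $L^2$-limits of martingales are martingales.

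\textbf{Comparison with the paper's own machinery.} The paper's proof of Theorem~\ref{thm:quadratic fluctuation} runs your scheme one order higher, with the mesoscopic corrector $\phi_L$, $L=L(N)\to\infty$, in place of a fixed local $F$.
\begin{itemize}
\item The correction $Z^N$ in \eqref{eq.corrector} is the quadratic analogue of your $G_F$.
\item The $H^{-1}$ replacement (Proposition~\ref{prop.replacement}) carries the rate $L^{-\alpha}$ from item~(\ref{lem.FluxReplacement}) of Lemma~\ref{lem.Homogenization}, instead of the qualitative statement $\inf_F\epsilon(F)=0$.
\item The identification of the quadratic variation with $\cc(\rho)$ is the homogenization estimate behind \eqref{eq.error in Lambda K}.
\end{itemize}
The same argument applied to $\Y^N$, with corrector $N^{-1-d/2}\sum_{z\in\mathcal Z_L}\phi_L^z\cdot\nabla f(z/N)$, would prove the proposition with a single limit and quantitative replacement errors. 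The cost is reliance on the recent estimates of \cite{FGW24,gu2025relaxation}. Your route needs only the classical closed-form theorem and the spectral gap. In exchange, it requires convergence of block $H^{-1}$ norms to Varadhan's variance, and a double limit.
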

	
	We explain the definition of $\cc, \D, \xi$ above. We denote by $\F_0$ the space of local functions on $\X$. We first define a quadratic form with respect to the function $F \in (\F_0)^d$ 
	\begin{align}\label{eq.c.form}
		\xi \cdot \cc(\rho; F) \xi = \frac{1}{2} \sum_{\vert x\vert = 1} \E_{\rho}\Ll[{c_{0,x}\Ll(\xi \cdot \Ll\{ x(\eta_x - \eta_0) - \pi_{0,x}\Bigg(\sum_{y \in \Zd} \tau_y F\Bigg)\Rr\}\Rr)^2}\Rr].
	\end{align}
	Then \emph{the effective conductivity} $\cc(\rho)$ is the minimization of $\cc(\rho; F)$
	\begin{align*}
		\xi \cdot \cc(\rho) \xi := \inf_{F \in \F_0^d}\xi \cdot \cc(\rho; F) \xi.
	\end{align*}
	\emph{The diffusion matrix} $\mathbf D : (0,1) \to \R^{d \times d}$ is then given by \emph{the Einstein relation} 
	\begin{align}\label{eq.def.D}
		\D(\rho) := \frac{\cc(\rho)}{2 \chi(\rho)},
	\end{align}
	where $\chi(\rho)$ is \emph{the compressibility}
	\begin{align*}
		\chi(\rho) := \rho (1-\rho).
	\end{align*}

	The object of this paper is the limit of the quadratic field $\left\{Q_t^N(\d u,\d v);\ t\in [0,T]\right\}$ defined as follows 
	\begin{equation}\label{eq.def.Q}
		Q_t^N(\d u,\d v) :=N^{-d}\sum_{\substack{ x,y \in \T_N^d\\ x\neq y}}\bar{\eta}^N_t(x)\bar{\eta}^N_t(y)\delta_{x/N}(\d u)\delta_{y/N}(\d v).
	\end{equation}
	Our main result is the counterpart of \cite[Theorem~2.4]{GJ19} in the non-gradient process. In the statement,  $\partial^2_{1,2}$ denotes differentiation with respect to the first and second variables respectively
		\begin{equation*}
			\partial_{1,2}^2 f :=\left\{\partial_{x_i}\partial_{x_j} f+\partial_{y_i}\partial_{y_j} f \right\}_{1\leq i,j\leq d}.
		\end{equation*}

	\begin{theorem}\label{thm:quadratic fluctuation}
		Let $(\mathcal{M}_t)_{t\in [0,T]}$ be the martingale process defined by
		\begin{equation}\label{eq.W}
			\mathcal{M}_t(f) :=\int_0^t\int_{\T^d}\left\{\Y_s\left(\nabla_{1}f(x,\cdot)\right)+\Y_s\left(\nabla_{2}f(\cdot,x)\right)\right\}\cdot\sqrt{\cc(\rho)}\,\d{\omega}(s,x).
		\end{equation}
		for every $f\in C^{\infty}\left(\T^{2d}\right)$, with $\Y$ and $\omega$ given in Proposition~\ref{prop.linear fluctuation}.  The sequence $(Q_t^N)_{t\in [0,T]}$ converges weakly as $N\to\infty$ in the space $D\left([0,T],\S'\left(\T^{2d}\right)\right)$, with a limit $(Q_t)_{t\in [0,T]}$ satisfying
		\begin{equation*}
			\d Q_t=\mathrm{Tr}\left(\D(\rho)\partial^2_{1,2} Q_t\right) \,\d t+\d \mathcal{M}_t,
		\end{equation*}
		and $Q_0$ as a white noise with intensity $\chi(\rho)^2$. 
	\end{theorem}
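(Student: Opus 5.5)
The plan is the standard martingale strategy of \cite{GJ19}, with the gradient computation replaced by a non-gradient decomposition in the spirit of Funaki's proof of Proposition~\ref{prop.linear fluctuation}. Fix $f\in C^\infty(\T^{2d})$, which we may take symmetric. Dynkin's formula gives
\[
Q^N_t(f)=Q^N_0(f)+\int_0^t \L_N Q^N_s(f)\,\d s+M^N_t(f),
\]
with $M^N(f)$ a martingale. The argument then has four steps:
\begin{enumerate}
\item[(i)] Prove tightness of $(Q^N)$ in $D([0,T],\S'(\T^{2d}))$.
\item[(ii)] Identify the drift $\int_0^t \L_N Q^N_s(f)\,\d s$ with $\int_0^t Q^N_s(\mathrm{Tr}(\D(\rho)\partial^2_{1,2}f))\,\d s$ up to an error vanishing in $L^2(\Pr)$.
\item[(iii)] Identify the quadratic variation of $M^N(f)$ with that of $\mathcal M(f)$, jointly with $\Y^N$.
\item[(iv)] Deduce uniqueness of the limit.
\end{enumerate}
The initial law $Q_0$ follows from a CLT for the degenerate U-statistic $N^{-d}\sum_{x\neq y}\bar\eta(x)\bar\eta(y)f(x/N,y/N)$ under the product measure. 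Its limit is the double Wiener integral with variance $2\chi(\rho)^2\|f\|^2_{L^2}$ for symmetric $f$, i.e.\ white noise of intensity $\chi(\rho)^2$ on the off-diagonal.

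\textbf{The drift.} Since $\pi_{x,x+e_i}$ acts on $\bar\eta(x)\bar\eta(y)$ through the current $j_{x,x+e_i}=c_{x,x+e_i}(\eta(x+e_i)-\eta(x))$, summation by parts gives
\[
\L_N Q^N(f)=N^{1-d}\sum_{x\neq y}\sum_i \tau_x j_{0,e_i}\,\bar\eta(y)\,\partial^N_{1,i}f(x/N,y/N)+(\text{symmetric term})+(\text{near-diagonal corrections}).
\]
The corrections come from pairs with $|x-y|\le 1$ and from $y$ in the range of $c$. They carry a factor $N^{1-d}\cdot N^d$ against a mean-zero local function, and I would control them by the Kipnis--Varadhan/$H_{-1}$ inequality, using that they are spatial gradients of local functions.

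For the main term I use Funaki's non-gradient decomposition. For any $F\in\F_0^d$ we have
\[
\tau_x j_{0,e_i}=\sum_j \D_{ij}(\rho)\,(\tau_{x+e_j}\eta(0)-\tau_x\eta(0))+\L\Big(\textstyle\sum_y\tau_y F\Big)\text{-terms}+\tau_x R_F,
\]
where $R_F$ lies in the orthocomplement with $H_{-1}$-norm small when $F$ nearly attains the infimum defining $\cc(\rho)$. The gradient part, after a second summation by parts, yields $Q^N_s(\mathrm{Tr}(\D\partial^2_{1,2}f))$. The $\L F$-part is absorbed into a martingale correction and contributes to the noise; this is exactly why $\cc(\rho)$, rather than the naive conductance, appears in \eqref{eq.W}. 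The remainder $R_F$ is weighted by the field $\bar\eta(y)$, so I cannot directly invoke Funaki's estimate. Instead I view $\sum_x\tau_x R_F\,g_y(x)\bar\eta(y)$, with $g_y(x)=N^{1-d}\partial_1 f(x/N,y/N)$, as a degree-two observable. I would bound its time integral through the variational formula
\[
\E\Big[\sup_t\Big(\int_0^t V\Big)^2\Big]\lesssim T\,\|V\|_{-1,N}^2.
\]

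\textbf{Main obstacle.} The central difficulty is an $H_{-1}$ estimate, uniform in $N$, for these quadratic local functionals with a spectator factor $\bar\eta(y)$. This amounts to a non-gradient ``second-order Boltzmann--Gibbs'' estimate. I would first try a duality decomposition by degree: project onto chaos of order $\le 3$ in $\bar\eta$, treat $\bar\eta(y)$ as frozen away from the support of $\tau_xR_F$, and use spectral gap on mesoscopic boxes. Here Hypothesis (3) ensures that $\bar\eta(y)$ commutes with $\pi_b$ for bonds not containing $y$. This should give $\|V\|_{-1,N}^2\lesssim \|f\|^2_{C^2}\big(\|R_F\|^2_{-1}+o_N(1)\big)$, and then $F\to$ optimizer finishes the step.

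\textbf{Quadratic variation and closing.} The martingale term has quadratic variation
\[
\int_0^t N^{2-2d}\sum_b c_b\big(\pi_b \textstyle\sum\bar\eta\bar\eta f\big)^2\,\d s .
\]
After adding the $\L F$-correction, this becomes a product of a linear field with local functions. By the same replacement argument together with Proposition~\ref{prop.linear fluctuation}, it converges to
\[
\int_0^t\int_{\T^d}\cc(\rho)\big|\Y_s(\nabla_1f(x,\cdot))+\Y_s(\nabla_2f(\cdot,x))\big|^2\,\d x\,\d s .
\]
Tightness follows from Aldous' criterion using these $L^2$ bounds and Mitoma's theorem. Any limit point $(Y,Q)$ then solves the stated linear martingale problem driven by the same $\omega$ as $\Y$. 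Uniqueness follows by the duality argument of \cite{GJ19}: test against $P_{t-s}f$, where $P$ is the semigroup of $\mathrm{Tr}(\D\partial^2_{1,2})$. This identifies the law of $Q$ and completes the proof.
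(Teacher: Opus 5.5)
Your architecture matches the paper's: Dynkin's formula for a corrected field whose extra term turns the $\L F$-part into martingale noise, an $H_{-1}$ bound for the non-gradient remainder with the spectator $\bar\eta(y)$ frozen, replacement of local averages of the carr\'e du champ by $\cc(\rho)$, and uniqueness via \cite{GJ19}. The corrector, however, is genuinely different.

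You follow Funaki's qualitative scheme. You take a fixed local $F$ nearly attaining the infimum defining $\cc(\rho)$, measure the remainder $R_F$ only through Varadhan's variance $\lim_{\ell\to\infty}\ell^{-d}\|\sum_{x\in\Lambda_\ell}\tau_xR_F\|^2_{-1,\Lambda_\ell}$, and pass to a double limit ($N\to\infty$, then $F\to$ optimizer). The paper instead uses the mesoscopic corrector $\phi_L$, $L=L(N)$, in \eqref{eq.corrector}, and imports quantitative homogenization. This means the flux bound (\ref{lem.FluxReplacement}) of Lemma~\ref{lem.Homogenization} with gain $L^{-\alpha}$, and convergence of box averages of $c_{a,a+e_i}\mathbf v_{a,i,j_1}\mathbf v_{a,i,j_2}$ to $\cc(\rho)$ at rate $L^{-\gamma_1}$.

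\emph{What each route buys.} The paper gets a single limit with rate $N^{-\gamma}$ (Proposition~\ref{prop.replacement}), so tightness and identification of the drift are immediate. The price is $\|\phi_L\|_\infty\lesssim L^{d+2}\log L$, the constraint $L^{100}\ll N$, and heavy moment bookkeeping. Your bounded $F$ makes the moment bounds trivial. In exchange you need the non-gradient decomposition of \cite{FUY96,fun96} and block asymptotics of the $H_{-1}$ norm. You also need to choose $F$ depending on the tolerance in the tightness step: for fixed $F$, the Kipnis--Varadhan bound summed over the $T/r$ intervals is of order $T\|V_F\|^2_{-1,N}$, which does not vanish as $r\to0$.

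The spectator is treated identically in both routes. The paper sets $V^z_{e_i}=\E_\rho[G\,W^z\,|\,\F_{\Lambda^z_{2L}}]$, with $W^z$ the spectator sum over $y\notin\Lambda^z_{2L}$, and uses $\E_\rho[(\pi_bV^z_{e_i})^2]\le\E_\rho[(W^z)^2]\,\E_\rho[(\pi_bG)^2]$. This is the precise form of your ``freeze $\bar\eta(y)$''.

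Three points need repair.

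\emph{First}, the rest of your mechanism for the main obstacle would not work. The speed-change generator does not preserve chaos degree, and $c$, $F$ are arbitrary local functions, so nothing reduces to chaos of order $\le3$. Spectral gap on boxes of side $\ell$ only gives $\|\sum_{x\in\Lambda_\ell}\tau_xR_F\|^2_{-1,\Lambda_\ell}\lesssim\ell^{d+2}$, hence $\|V\|_{-1,N}=O(\ell)$. That suffices for tightness but is not small. Smallness must come from the conditioning above, combined with block variance asymptotics and the fact that Varadhan's variance of $R_F$ can be made arbitrarily small.

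\emph{Second}, the quadratic variation alone does not show that the limit is driven by the same $\omega$ as $\Y$. It only produces some white noise $\tilde\omega$. If $\tilde\omega$ were independent of $\Y$, then $Q_t(g\otimes g)$ would be conditionally Gaussian given $\Y$, hence not bounded below by $-\chi(\rho)\|g\|^2_{L^2}$, as it must be. You need either the covariation of $\mathcal M(f)$ with $\mathcal N(g)$ from \eqref{def.N}, or the paper's device. The paper uses the exact identity $Q^N_t(f_1\otimes f_2)=\Y^N_t(f_1)\Y^N_t(f_2)-N^{-d}\sum_x\bar\eta^N_t(x)^2f_1(x/N)f_2(x/N)$. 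Its limit $Q_t(f_1\otimes f_2)=\Y_t(f_1)\Y_t(f_2)-\chi(\rho)\int_{\Td}f_1f_2$ yields, via It\^o's formula, condition (iii) of \cite[Theorem~3.9]{GJ19} in Proposition~\ref{prop.mart.unique}. This identity also fixes the joint law of $(Q_0,\Y_0)$, which your duality argument with $P_{t-s}f$ needs and which the marginal law of $Q_0$ does not supply.

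\emph{Third}, your description of $Q_0$ as a double Wiener integral (variance $2\chi(\rho)^2\|f\|^2_{L^2}$ for symmetric $f$) is correct. But such a variable is not Gaussian, e.g.\ $Q_0(g\otimes g)=\Y_0(g)^2-\chi(\rho)\|g\|^2_{L^2}$, so the gloss ``i.e.\ white noise'' is wrong. The same caveat applies to the paper's check of (v) in Proposition~\ref{prop.mart.unique}. There, factorizing $\E_\rho[\exp(\mathrm{i}sQ^N_t(f))]$ over pairs $x\neq y$ is not legitimate, because the products $\bar\eta(x)\bar\eta(y)$ are dependent.
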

	\begin{remark}\label{remark.probability space}
		Let us make more comments on the weak convergence mentioned above. The weak convergence usually does not specify the probability space. However, as the definition \eqref{eq.W} indicates, the process $(\mathcal{M}_t)_{t \in [0,T]}$ and $(\Y_t)_{t \in [0,T]}$ live in the same probability space. Therefore, Theorem~\ref{thm:quadratic fluctuation} implies the following weak convergence in $D\left([0,T], \S' \left(\Td\right)\times\S'\left(\T^{2d}\right)\right)$
		\begin{align*}
			(\Y^N_t, Q^N_t)_{t \in [0,T]} \xRightarrow{N \to \infty} (\Y_t, Q_t)_{t \in [0,T]}.
		\end{align*} 
		This convention is kept throughout the paper, and will also be recalled from time to time for some other related processes.
	\end{remark}

	We mention the organization of the paper and highlight the novelty. In Section~\ref{section.pre}, we introduce the notation and the basic tools. By Dynkin's formula, we get a decomposition for our target process $(Q_t^N)_{t\in [0,T]}$, and we treat the martingale term and the drift term separately in Section~\ref{section.martingale} and Section~\ref{section.drift}. The main argument is the tightness and the characterization of the limit. Concerning the non-gradient model, an important step is to correct the process $Q^N$ as 
	\[
	\mathcal{Q}^N :=Q^N+Z^N,
	\]
	with $Z^N$ defined in \eqref{eq.corrector}. Afterwards, we develop the \emph{replacement argument} to analyze the modified field $\mathcal Q^N$. The diffusion matrix $\D$ is closely related to the homogenization theory, and the proof in this paper makes use of some recent quantitative results from \cite{FGW24, gu2025relaxation}; see Section~\ref{subsec.corrector} for details. In Section~\ref{section.characterization}, we get a characterization for the quadratic field following \cite[Theorem~3.9]{GJ19}.

	\section{Preliminary}\label{section.pre}
	\subsection{Notation}
	\subsubsection{Probability space}
	Recall that $\Td_N := (\mathbb{Z} / N \mathbb{Z})^d$ stands for the lattice torus of scale $N$.	For every $\Lambda \subset \Td_N$, we denote by $\F_{\Lambda}$ the $\sigma$-algebra generated by $\left\{\eta^N(x): x \in \Lambda\right\}$ and write $\F$ as a shorthand notation of $\F_{\Td_N}$. 
	
	Given $\rho \in (0,1)$ as the density of particles, let $\Pr = \operatorname{Bernoulli}(\rho)^{\otimes \Td_N}$ stand for the Bernoulli product measure on $\X^N$. The triplet $\left(\X^N, \F, \Pr\right)$ is the probability space in this paper. For the expectation under $\Pr$, we use the notation $\Er[\ \cdot\ ]$. 
	
	Since the law of $(\eta^N_t)_{t \geq 0}$ is invariant under $\Pr$, we sometimes omit the subscript when calculating the expectation under $\Pr$. Meanwhile, the constant $T > 0$ is fixed throughout the paper to indicate the interval of time, and we abuse $Q \equiv (Q_t)_{t \in [0,T]}$ in some statement. The meaning will become clear in the context, and these conventions apply to all the processes.

	\subsubsection{Geometry}
	We denote the hypercube of side length $L$ by 
	\begin{equation*}
		\Lambda_L := \left(-\frac{L}{2}, \frac{L}{2}\right)^d \cap \T^d_N.
	\end{equation*}
	For simplicity, we assume that $L$ is an odd integer and is a factor of $N$, which allows us to divide  $\Td_N$ into a disjoint union  
	\begin{equation*}
		\Td_N=\bigsqcup_{z\in \mathcal Z_L}\Lambda_L^z,
	\end{equation*}
	with the box of length $L$ centered at $z$
	\begin{equation*}
		\Lambda_L^z := z+\Lambda_L,
	\end{equation*}
	and the set of $z$
	\begin{equation*}
		\mathcal Z_L:=L\mathbb Z^d\cap \Td_N.
	\end{equation*}
	We use $z$ to denote the block index and $x$ to denote microscopic lattice sites. For every $x\in \Td_N$, there exists a unique  $z$ such that $x\in \Lambda_L^z$, and we denote it by $z(x)$. We define $\partial \Lambda$ as the boundary of $\Lambda$: 
	\[
	\partial \Lambda:=\{x\in \Lambda: \exists y\notin \Lambda, x\sim y\},
	\]
	and $\Lambda^{-}$ the interior of $\Lambda$:
	\[
	\Lambda^-:=\Lambda\backslash\partial\Lambda.
	\]
	Recall the set of bonds in \eqref{eq.defBond} and we define its enlarged version by 
	\begin{equation*}
		\bar{\Lambda^*}:=\left\{\{x,y\}:x\in \Lambda, y=x+e_i, 1\leq i\leq d\right\}.
	\end{equation*}
	This notation provides a better structure for bonds:
	\begin{equation*}
		\left(\Td_N\right)^*=\bigsqcup_{z\in \mathcal Z_L}\bar{\left(\Lambda_L^z\right)^*}.
	\end{equation*}
	We also use the following convention for the summation 
	\begin{align*}
		\sum_x f(x) \equiv \sum_{x\in \Td_N} f(x).
	\end{align*}
	For every $\xi\in\R^d$, we define the affine function
	\begin{equation*}
		\ell_{\xi}=\sum_x\left(\xi\cdot x\right)\eta^N(x).
	\end{equation*}
    Please note that for every $\xi\in \R^d$ and $b\in \left(\Td_N\right)^*$, $\pi_b \ell_{\xi}$ has no ambiguity, although the summation of $x$ is on the torus.  
	\subsubsection{Test function and discrete derivative.}
	Throughout the paper, $f$ is called a \textit{test function} if and only if $f\in C^\infty\left(\T^{2d}\right)$ and is symmetric in the sense 
	\begin{equation*}
		\forall u,v\in\Td, \qquad f(u,v)=f(v,u).
	\end{equation*}
	We define the discrete derivative notation
	\begin{equation*}
		\nabla_{1,i}^{N}f\left(\frac{x}{N},\frac{y}{N}\right):=\frac{f\left(\frac{x+e_i}{N},\frac{y}{N}\right)-f\left(\frac{x}{N},\frac{y}{N}\right)}{\frac{1}{N}},
	\end{equation*}
	and
	\begin{equation*}
		\nabla_{2,i}^{N}f\left(\frac{x}{N},\frac{y}{N}\right):=\frac{f\left(\frac{x}{N},\frac{y+e_i}{N}\right)-f\left(\frac{x}{N},\frac{y}{N}\right)}{\frac{1}{N}}.
	\end{equation*}
	We use the subscript $1$ and $2$ to indicate differentiation with respect to the first and second variables respectively. The following identity is then valid for the test function and all $1\leq i\leq d$ thanks to the symmetry 
	\begin{equation*}
		\nabla_{1,i}^{N}f\left(\frac{x}{N},\frac{y}{N}\right)=\nabla_{2,i}^{N}f\left(\frac{y}{N},\frac{x}{N}\right).
	\end{equation*}
	Then the discrete gradients for the first and second variables are defined respectively as 
	\begin{align*}
		\nabla_1^{N}f(x/N,y/N) &:=\left\{\nabla_{1,i}^{N}f(x/N,y/N)\right\}_{1\leq i\leq d}, \\
		\nabla_2^{N}f(x/N,y/N) &:=\left\{\nabla_{2,i}^{N}f(x/N,y/N)\right\}_{1\leq i\leq d}.
	\end{align*}
	\subsection{Criteria of tightness}
	We recall Mitoma's criterion for the tightness of distribution-valued processes.  
	\begin{proposition}[\cite{Mitoma1983}, Mitoma's criterion]
		The sequence of processes \(\left\{ (X_t^N)_{t\in [0,T]} \right\}_{N \in \mathbb{N}}\) is tight in  
		\( D\left([0, T],\S'\left(\mathbb{T}^{2d}\right)\right) \) if and only if \(\left\{ (X_t^N(f))_{t\in [0,T]} \right\}_{N \in \mathbb{N}}\) is tight in  	\( D([0, T], \mathbb{R}) \) for every \( f \in C^\infty\left(\mathbb{T}^{2d}\right) \). Moreover, if every limit point of \(\left\{ (X_t^N(f))_{t\in [0,T]} \right\}_{N \in \mathbb{N}}\) is supported on continuous, real-valued trajectories for every \( f \in C^\infty\left(\mathbb{T}^{2d}\right) \),  
		then every limit point of \(\left\{ (X_t^N)_{t\in [0,T]} \right\}_{N \in \mathbb{N}}\) is supported on \( C\left([0, T],\S'\left(\mathbb{T}^{2d}\right)\right) \) .
	\end{proposition}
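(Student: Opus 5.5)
The plan is the classical Hilbertian-norm argument on the nuclear space $\S(\T^{2d}) = C^\infty(\T^{2d})$. The ``only if'' direction is immediate, because for fixed $f$ the map $D([0,T],\S'(\T^{2d}))\ni X \mapsto X(f)\in D([0,T],\R)$ is continuous and continuous images of tight families are tight. For the converse I will work with the Fourier basis $\{e_k\}_{k\in\mathbb Z^{2d}}$ of $L^2(\T^{2d})$ and the Sobolev norms
\[
\|g\|_{m}^2 := \sum_{k}(1+|k|^2)^{m}|\hat g(k)|^2 .
\]
I denote by $H_m$ the completion under $\|\cdot\|_m$, with dual $H_{-m}$. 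Then $\S'(\T^{2d}) = \bigcup_m H_{-m}$ and $\S = \bigcap_m H_m$. The key structural fact is that the embedding $H_{m+p}\hookrightarrow H_m$ is Hilbert--Schmidt as soon as $p>d$, since $\sum_k (1+|k|^2)^{-p}<\infty$ in dimension $2d$.

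\textbf{Step 1 (uniform control in one Sobolev norm).} Fix $\epsilon>0$. By the tightness of each $X^N(f)$, the quantity $\sup_N \P(\sup_{t\le T}|X^N_t(f)|>M)$ tends to $0$ as $M\to\infty$. I consider the functional
\[
\Phi(f):=\sup_N \E\Bigl[1\wedge \sup_{t\le T}|X^N_t(f)|\Bigr].
\]
It is lower semicontinuous on the Fréchet space $\S$, because it is a supremum of continuous functionals; continuity of each term follows from the fact that each $X^N_t$ is a.s.\ a continuous functional and dominated convergence applies. It is also subadditive and symmetric, and $\Phi(f/n)\to 0$ for each $f$. A Baire category argument, as in Mitoma, then yields $m$ and $\delta>0$ such that $\|f\|_m\le\delta$ implies $\Phi(f)\le\epsilon$. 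This is the step I expect to be the main obstacle. It is where nuclearity of $\S$ must be combined with a Baire argument to upgrade ``pointwise in $f$'' control to a uniform Hilbertian bound, and the measurability and lower semicontinuity details must be handled with care.

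\textbf{Step 2 (compact containment).} From Step 1 and the Hilbert--Schmidt property with $p>d$, I will deduce
\[
\sup_N \P\Bigl(\sup_{t\le T}\|X^N_t\|_{-m-p}>M\Bigr)\le C\epsilon + o_M(1).
\]
To do this, expand $\|X_t\|_{-m-p}^2=\sum_k (1+|k|^2)^{-m-p}|X_t(e_k)|^2$ and estimate via a Gaussian-randomization or Chebyshev argument, as in Mitoma's proof. Since balls of $H_{-m-p}$ are compact in $H_{-m-2p}$, this gives compact containment of the paths in a fixed Hilbert space.

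\textbf{Step 3 (modulus of continuity and the continuity statement).} The Aldous/Skorokhod modulus $w'$ in $H_{-m-2p}$ is controlled by the moduli of the finitely many coordinates $X^N(e_k)$, $|k|\le K$, which are tight by hypothesis. The tail $|k|>K$ is made small uniformly in $N$ using Step 2. This establishes tightness in $D([0,T],H_{-m-2p})$, hence in $D([0,T],\S'(\T^{2d}))$.

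For the final claim, let $X$ be a limit point. For each $f$ in the countable family $\{e_k\}$, the trajectory $X(f)$ is a.s.\ continuous. Taking a countable intersection of full-measure events, and using that $X$ takes values in $D([0,T],H_{-m-2p})$ with $\sup_t\|X_t\|_{-m-2p}<\infty$, density of finite Fourier sums then gives continuity of $t\mapsto X_t(f)$ for all $f$. Hence $X\in C([0,T],\S'(\T^{2d}))$ a.s.
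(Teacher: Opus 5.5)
The paper does not prove this statement; it only cites Mitoma. Your Steps~1--2 follow the standard route and are sound up to routine details. Step~1 is the Baire category argument for the lower semicontinuous, subadditive functional $\sup_N\mathbb{E}[1\wedge\sup_t|X^N_t(f)|]$. Step~2 uses Gaussian randomization over a Hilbert--Schmidt embedding to make $\sup_N\mathbb{P}(\sup_t\|X^N_t\|_{-q}>M)$ small. Two small corrections. Continuity of $f\mapsto\sup_t|X_t(f)|$ along a fixed c\`adl\`ag path needs that $\{X_t\}_{t\le T}$ is bounded, hence equicontinuous by Banach--Steinhaus. And nuclearity enters in Step~2, not in the Baire step.

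The genuine gap is in Step~3. The Skorokhod modulus $w'$ of an $\mathbb{R}^n$-valued path is not controlled by the moduli $w'$ of its coordinates, because $w'$ needs one partition that serves all coordinates at once. So tightness of each $X^N(e_k)$, $|k|\le K$, in $D([0,T],\mathbb{R})$ does not give tightness of $(X^N(e_k))_{|k|\le K}$ in $D([0,T],\mathbb{R}^n)$.

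A counterexample to the step: take a real orthonormal Fourier basis, $k_1\neq k_2$, $t_0\in(0,T)$, and set $X^N_t:=\mathbf{1}_{[t_0,T]}(t)\,e_{k_1}+\mathbf{1}_{[t_0+1/N,T]}(t)\,e_{k_2}$.
\begin{itemize}
\item Every coordinate $X^N(e_k)$ has at most one jump, at a converging time, so each is tight.
\item Compact containment is trivial, since the paths take finitely many values.
\item Yet $X^N(e_{k_1}+e_{k_2})=\mathbf{1}_{[t_0,T]}+\mathbf{1}_{[t_0+1/N,T]}$ has $w'(\cdot,\delta)\ge 1$ once $1/N<\delta$. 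Hence $X^N$ is not tight in $D([0,T],\mathcal{S}'(\mathbb{T}^{2d}))$.
\end{itemize}
So ``finitely many tight coordinates plus a uniformly small tail'' does not yield tightness. You must use the hypothesis for linear combinations of test functions. Two standard repairs:
\begin{itemize}
\item Use the fact (e.g.\ Ethier--Kurtz) that a sequence in $D([0,T],\mathbb{R}^n)$ is tight iff all coordinates and all pairwise sums are tight, applied with $f=e_k+e_l$.
\item Or use Jakubowski's criterion directly: compact containment (your Step~2) plus tightness of $X^N(f)$ for a point-separating family of continuous functions closed under addition, here $X\mapsto X(f)$, $f\in C^\infty(\mathbb{T}^{2d})$.
\end{itemize}
In the C-tight setting, which is all the paper actually uses via Proposition~\ref{Prop.Tightness.real-valued}, your coordinatewise argument does go through. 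There the uniform modulus $\omega$ of a vector is bounded by the sum of the coordinate moduli.

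Finally, in your continuity claim, knowing that every $t\mapsto X_t(f)$ is continuous gives only weak continuity. To conclude $X\in C([0,T],\mathcal{S}'(\mathbb{T}^{2d}))$, note that each jump $\Delta X_t\in H_{-q}$ annihilates all $e_k$ and therefore vanishes.
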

    To make the limit for the sequence of real-valued processes lie in the Skorokhod space $D([0, T], \mathbb{R})$, we usually use the moduli 
    \[
    w'(\varphi, r) := \inf_{\{t_i\}_{i=0, \dots, k}} \max_{0 \le i \le k-1} \sup_{s, t \in [t_i, t_{i+1})}\vert \varphi(t)-\varphi(s)\vert, \qquad r > 0,
    \]
    where \(\{t_i\}_{i=0, \dots, k}\) runs over all partitions \(0 = t_0 < \dots < t_k = T\) for \(k \in \mathbb{N}_0\), such that \(\min_i |t_{i+1} - t_i| > r\).  
    One can show that \(\varphi \in D([0, T], \mathbb R)\) if and only if \(\lim_{r \to 0} w'(\varphi, r) = 0\); see \cite[Theorem~3.21, Chapter~VI]{Limit_theorems_for_stochastic_processes} and \cite[Theorem~12.3]{Convergence_of_probability_measures}.

    In order to obtain the limit process with continuous trajectory, one approach is to use a stronger topology \emph{the modulus of continuity} $\omega(\varphi,r)$: 
    \begin{equation*}
		\omega(\varphi,r):=\sup_{s,t\in[0,T], \vert t-s\vert\leq r}\vert\varphi(t)-\varphi(s)\vert. 
	\end{equation*}
    We will use the following criteria, which is called \emph{the C-tightness}. One can find it in the references \cite[Theorem~1.3, Remarks~1.4, 1.5]{kipnis1998scaling} and  \cite[Theorem~3.21, Chapter~VI]{Limit_theorems_for_stochastic_processes}. 
	\begin{proposition}\label{Prop.Tightness.real-valued}
		A process \(\left\{ (X_t^N)_{t\in [0,T]} \right\}_{N \in \mathbb{N}}\) is tight on the space $D\left([0,T],\R\right)$ and admits the limit in $C\left([0,T],\R\right)$ if

		\begin{itemize}[leftmargin=2em]
			\item [(i)] For every $t\in [0,T]$, the sequence $\left\{X^N_t\right\}_{N\in \mathbb{N}}$ is tight in $\R$;
            
			\item [(ii)] For all $\varepsilon>0$, we have 
			\[
            \inf_{r>0}\limsup_{N\to\infty}\P\left[\omega\left(X^N,r\right)\geq \varepsilon\right]=0.
            \]
		\end{itemize}
	\end{proposition}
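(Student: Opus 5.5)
The plan is to reduce the statement to the classical characterization of tightness in $D([0,T],\R)$ (see \cite[Theorem~13.2]{Convergence_of_probability_measures}). That characterization says $\{X^N\}$ is tight if and only if two conditions hold:
\begin{equation*}
\lim_{a\to\infty}\limsup_{N\to\infty}\P\Bigl[\sup_{t\in[0,T]}\vert X^N_t\vert\geq a\Bigr]=0,
\qquad
\inf_{r>0}\limsup_{N\to\infty}\P\bigl[w'(X^N,r)\geq\varepsilon\bigr]=0 \ \ \forall\varepsilon>0 .
\end{equation*}
Once tightness is established, continuity of the limit points is a separate argument using the maximal-jump functional. I will derive both tightness conditions from (i) and (ii), and then prove continuity.

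\emph{Step 1: the modulus $w'$.} I compare $w'$ with $\omega$. For $0<r<T/2$, choose a partition of $[0,T]$ into $k=\lfloor T/(2r)\rfloor$ intervals of equal length $T/k$. Since $2r\le T/k<4r$ whenever $k\ge 1$, every interval is longer than $r$ and shorter than $4r$. Hence any two points $s,t$ in the same interval satisfy $\vert t-s\vert\le 4r$, which gives
\begin{equation*}
w'(\varphi,r)\le\omega(\varphi,4r).
\end{equation*}
Consequently $\P[w'(X^N,r)\geq\varepsilon]\le\P[\omega(X^N,4r)\geq\varepsilon]$, and condition (ii) yields the second tightness condition after relabelling $r$.

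\emph{Step 2: compact containment.} Covering $[0,T]$ by $\lceil T/r\rceil$ steps of length at most $r$ starting from $0$ gives the deterministic bound
\begin{equation*}
\sup_{t\in[0,T]}\vert X^N_t\vert\le\vert X^N_0\vert+\lceil T/r\rceil\,\omega(X^N,r).
\end{equation*}
Given $\delta>0$, I first fix $r$ by (ii) so that $\limsup_N\P[\omega(X^N,r)\ge 1]<\delta$. Then I use (i) at $t=0$ to choose $a$ with $\sup_N\P[\vert X^N_0\vert\ge a]<\delta$. With these choices, $\limsup_N\P[\sup_t\vert X^N_t\vert\ge a+\lceil T/r\rceil]<2\delta$. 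Together with Step 1, the Billingsley characterization gives tightness in $D([0,T],\R)$.

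\emph{Step 3: continuity of limit points.} This is the only step needing some care, because $\omega(\cdot,r)$ is not continuous for the Skorokhod topology. I will instead use the maximal-jump functional
\begin{equation*}
J(\varphi):=\sup_{t\in(0,T]}\vert\varphi(t)-\varphi(t-)\vert .
\end{equation*}
This functional is continuous on $D([0,T],\R)$ in the $J_1$ topology (\cite[Chapter~VI]{Limit_theorems_for_stochastic_processes}), and it satisfies $J(\varphi)\le\omega(\varphi,r)$ for every $r>0$. Let $X$ be the limit along a subsequence $X^{N_k}$. Since $\{J>\varepsilon\}$ is open, the portmanteau theorem gives, for every $r>0$,
\begin{equation*}
\P[J(X)>\varepsilon]\le\liminf_{k}\P[J(X^{N_k})>\varepsilon]\le\limsup_{N}\P[\omega(X^N,r)\ge\varepsilon].
\end{equation*}
Taking the infimum over $r$ and using (ii), the right-hand side vanishes. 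Therefore $J(X)=0$ almost surely, that is, $X\in C([0,T],\R)$ almost surely.
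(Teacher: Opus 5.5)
Your proof is correct. It is essentially the standard argument behind the results the paper cites in place of a proof (\cite[Theorem~1.3, Remarks~1.4, 1.5]{kipnis1998scaling} and \cite[Chapter~VI]{Limit_theorems_for_stochastic_processes}): bound $w'$ by $\omega$, and rule out jumps of limit points through the continuous maximal-jump functional. The only variation is that you invoke Billingsley's sup-norm form of the criterion \cite{Convergence_of_probability_measures}. That forces your Step~2, but it also shows hypothesis (i) is only needed at $t=0$, whereas the Kipnis--Landim version assumes pointwise tightness at each $t$ directly.
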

    In the next sections, we combine these two criteria to conclude the tightness for sequences of martingale and drift separately. \subsection{Corrector}\label{subsec.corrector}
	Since we treat the non-gradient process, the corrector method is needed to close the equation. This corrector is introduced to eliminate the non-gradient part of the drift in the Kawasaki dynamics. The corrector in this paper is $\phi_L=\left\{\phi_{L,e_i}\right\}_{1\leq i\leq d}$ introduced in \cite{FGW24} (see also \cite[(4.10)]{gu2025relaxation}), which is the unique minimizer defined below
	\begin{equation*}
		\phi_{L,e_i} := \arg \min_{\substack{\phi \in \F_0(\Lambda_L^-)\\ \Er[\phi] = 0}} \Ll\{  \sum_{b\in\ov{\Lambda_L^*}} \Er\Ll[c_b(\pi_b (\ell_{e_i} + \phi))^2\Rr] \Rr\}.
	\end{equation*}
	Roughly, we have $\phi_{L,e_i} \simeq e_i \cdot F_L$, where $F_L$ is the minimizer of \eqref{eq.c.form} in $\F_0(\Lambda_L)$. Then the centered flux $\mathbf{g}_{L,e_i,b}$ is defined as 
	\begin{equation}\label{eq.flux}
		\mathbf{g}_{L,e_i,b}:= c_b\pi_b\left(\ell_{e_i}+\phi_{L,e_i}\right)-\pi_b\ell_{\mathbf D(\rho)e_i}.
	\end{equation}
	It measures the error of the replacement argument, as we expect that 
	\begin{align}\label{eq.replace}
		c_b\pi_b\left(\ell_{e_i}+\phi_{L,e_i}\right) \simeq \pi_b\ell_{\mathbf D(\rho)e_i}.
	\end{align}
	We then define their version after shift as
	\begin{align*}
		\phi^z_{L,e_i}  := \tau_z \phi_{L,e_i}, \qquad \mathbf{g}^z_{L,e_i,b} := \tau_z \mathbf{g}_{L,e_i,b}.
	\end{align*}

	The following estimates about the corrector $\phi_L$ and $\mathbf{g}_{L}$ are developed in \cite{FGW24} and \cite{gu2025relaxation}. We highlight that, (4) is a quantitative version of the replacement argument \eqref{eq.replace}.
	\begin{lemma}\label{lem.Homogenization}
		\begin{enumerate}
			\item\label{lem.Correctorlocal} \cite[Proposition 4.3 (1)]{gu2025relaxation}:
			The local corrector $\phi_L^z$ is a local function and  $\F_{(\Lambda_L^z)^-}$-measurable.
			\item\label{lem.CorrectorLp} \cite[Lemma 4.4]{FGW24}: 
			There exists a finite positive constant $C(d,\lambda)$ such that the $L^\infty$ and $L^{2}$ norms for the corrector $\phi_L^z$ satisfy the following estimate:
			\begin{align*}
				&\norm{\phi_{L,e_i}^z}_{L^\infty}\leq CL^{d+2}\log L,\qquad 1\leq i\leq d,\ z\in \Z_L,\\
				&\norm{\phi_{L,e_i}^z}_{L^2}^2\leq CL^{d+2},\ \ \ \ \ \ \ \qquad 1\leq i\leq d,\ z\in \Z_L.
			\end{align*}
			\item\label{lem.CorrectorH1} \cite[Proposition 4.3 (3)]{gu2025relaxation}:
			There exists a finite positive constant $C(\lambda,\rho)$ such that the corrector satisfies:
			\begin{equation*}
				\sum_{b\in{(\Lambda_L^z)^*}}\E_{\rho}\left[c_b(\eta)\left(\pi_b\phi_{L,e_i}^z\right)^2\right]\leq CL^d,\quad 1\leq i\leq d,\ z\in \mathcal Z_L.
			\end{equation*}
			\item\label{lem.FluxReplacement}\cite[Proposition 4.3 (4)]{gu2025relaxation}
			There exists an exponent $\alpha(\lambda,\r) > 0$ and a positive constant $C(\lambda,\r) < \infty$, such that for every $G : \X \to \R$ and $1\leq i\leq d$, we have 
			\begin{align*}
				\Ll\vert L^{-d}\sum_{b \in \bar{\left(\Lambda_L^z\right)^*} }\E_\rho\left[(\pi_b G) \mathbf g_{L,e_i, b}^z\right] \Rr\vert \leq C L^{-\alpha}\left(L^{-d}\sum_{b \in \bar{\left(\Lambda_L^z\right)^*} }\E_\rho\left[(\pi_b G)^2\right]\right)^\frac{1}{2}.
			\end{align*}
		\end{enumerate}
	\end{lemma}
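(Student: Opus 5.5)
These four estimates are imported from \cite{FGW24} and \cite{gu2025relaxation}. The plan below reconstructs them from the variational definition of $\phi_{L,e_i}$, roughly in the order (1), (3), (2), (4).

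\textbf{Items (1) and (3).} The functional
\[
\phi\mapsto\sum_{b\in\ov{\Lambda_L^*}}\Er\bigl[c_b(\pi_b(\ell_{e_i}+\phi))^2\bigr]
\]
is a convex quadratic form on the finite-dimensional space $\{\phi\in\F_0(\Lambda_L^-):\Er[\phi]=0\}$.
\begin{itemize}
\item Existence of a minimizer is immediate from finite dimensionality.
\item For uniqueness, I would show the Dirichlet part $\sum_b\Er[c_b(\pi_b\phi)^2]$ is positive definite on mean-zero interior functions. The bonds of $\ov{\Lambda_L^*}$ crossing $\partial\Lambda_L^-$ connect the interior to the exterior, so a function with vanishing energy must be constant, hence zero.
\item Item (1) then follows because $\phi_L$ is by construction $\F_{\Lambda_L^-}$-measurable, and translation by $\tau_z$ moves this to $(\Lambda_L^z)^-$.
\item For item (3), compare with the competitor $\phi=0$. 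The energy of $\ell_{e_i}+\phi_L$ is at most that of $\ell_{e_i}$, which is at most $\lambda\cdot\#\ov{\Lambda_L^*}\le CL^d$. The bound on $\sum_b\Er[c_b(\pi_b\phi_L)^2]$ then follows from the elementary inequality $(a-b)^2\le 2a^2+2b^2$.
\end{itemize}

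\textbf{Item (2).}
\begin{itemize}
\item \emph{$L^2$ bound.} I would combine (3) with a Poincaré inequality for Kawasaki dynamics in a box of side $L$, whose spectral gap is of order $L^{-2}$, uniformly in the particle number. Functions of the particle number $N_{\Lambda}$ are not controlled by interior bonds alone. So I would split $\phi_L=(\phi_L-\E[\phi_L\mid N_{\Lambda}])+\E[\phi_L\mid N_{\Lambda}]$:
\begin{itemize}
\item the first piece is controlled by the canonical spectral gap, costing $L^2\cdot L^d$;
\item the second piece is a function of one variable, controlled through the boundary-crossing bonds by a one-dimensional Poincaré inequality in the particle number.
\end{itemize}
Together these give $\|\phi_L\|_{L^2}^2\le CL^{d+2}$.
\item \emph{$L^\infty$ bound.} I would use the Euler--Lagrange equation $-\L_{\ov{\Lambda}}\phi_L=\sum_b\pi_b^*(c_b\pi_b\ell_{e_i})$, whose right-hand side is a bounded local divergence. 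I would then represent $\phi_L$ through the semigroup, $\phi_L=\int_0^\infty e^{t\L}h\,\d t$. For $t\lesssim L^2\log L$ I would use the trivial $L^\infty$ bound, summed over $L^d$ bonds. For larger $t$ I would use exponential decay from the spectral gap together with the finiteness of the state space. This yields $CL^{d+2}\log L$.
\end{itemize}

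\textbf{Item (4): the main obstacle.} The first-variation identity of the minimization gives exact orthogonality
\[
\sum_b\Er\bigl[c_b\,\pi_b(\ell_{e_i}+\phi_L)\,\pi_b\psi\bigr]=0\qquad\text{for all }\psi\in\F_0(\Lambda_L^-).
\]
However, item (4) allows an arbitrary $G$ and subtracts the homogenized flux $\pi_b\ell_{\D e_i}$, so the missing ingredient is a quantitative convergence rate. I would follow the renormalization scheme of Armstrong--Kuusi--Mourrat adapted to Kawasaki dynamics in \cite{FGW24}, in four steps.
\begin{itemize}
\item Introduce the subadditive energy $\nu(\Lambda_L,\xi)$ and its dual $\nu^*(\Lambda_L,q)$.
\item Prove that $\nu(\Lambda_L,\xi)+\nu^*(\Lambda_L,q)-\xi\cdot q$ decays like $L^{-\alpha}$ at $q=\cc(\rho)\xi$. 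This is done by iterating over triadic scales: the variance of the spatial averages of gradients and fluxes on subcubes is shown to contract at each scale. The contraction uses independence across disjoint blocks under $\Pr$, and the finite-range condition of Hypothesis (1) handles interactions near block boundaries.
\item Deduce that the spatial averages of the flux $c_b\pi_b(\ell_{e_i}+\phi_L)$ over $\Lambda_L$ are $L^{-\alpha}$-close to $\D(\rho)e_i$ in the weak sense, using the Einstein relation \eqref{eq.def.D}.
\item Pair with an arbitrary $\pi_bG$ by decomposing $G$ into its projection onto interior gradients, which is killed by orthogonality, plus a remainder. The remainder is handled by Cauchy--Schwarz against the $H^{-1}$-type smallness obtained in the previous step.
\end{itemize}
The real difficulty is obtaining the algebraic rate $L^{-\alpha}$ rather than mere convergence. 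This is where the multiscale argument, rather than a soft subadditivity argument, is indispensable.
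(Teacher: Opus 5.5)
The paper gives no argument for this lemma: all four items are quoted from \cite{FGW24} and \cite{gu2025relaxation}, so your reconstruction has to stand on its own. Items (1) and (3) are correct as you wrote them. The $L^2$ half of (2) is fine in outline. The one-dimensional step in the particle number needs care, but a moving-particle comparison with flips at the inner boundary gives $\Er[\psi^2]\le CL^2\sum_{b\in\Lambda_L^*}\Er[(\pi_b\psi)^2]$ directly for mean-zero $\F_{\Lambda_L^-}$-measurable $\psi$.

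\textbf{The $L^\infty$ half of (2) fails as written.}

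First, $\phi_L$ satisfies its Euler--Lagrange equation only after conditioning, $\Er\bigl[\sum_{b\in\ov{\Lambda_L^*}}c_b\pi_b(\ell_{e_i}+\phi_L)\,\big|\,\F_{\Lambda_L^-}\bigr]=0$. So the semigroup must be that of $\mathcal A:=\Er\bigl[\sum_{b}c_b\pi_b(\cdot)\,\big|\,\F_{\Lambda_L^-}\bigr]$ acting on $\F_{\Lambda_L^-}$-measurable functions: Kawasaki moves inside with averaged rates, plus creation and annihilation at the inner boundary. With the box dynamics itself, $\int_0^\infty e^{t\L}h\,\d t$ is not even $\F_{\Lambda_L^-}$-measurable.

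More seriously, a spectral gap of order $L^{-2}$ plus finiteness of the state space only gives
\[
\norm{e^{t\mathcal A}h}_{L^\infty}\le\mu_{\min}^{-1/2}e^{-ct/L^2}\norm{h}_{L^2},\qquad \mu_{\min}=\min(\rho,1-\rho)^{|\Lambda_L^-|},
\]
so the prefactor is $e^{cL^d}$. The tail is harmless only for $t\gtrsim L^{d+2}$. Since $\norm{h}_{L^\infty}\sim L^d$ in general, this yields $L^{2d+2}$, not $L^{d+2}\log L$.

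The $\log L$ comes from a logarithmic Sobolev inequality of order $L^2$ for this dynamics, as for Kawasaki dynamics in a box (Yau). Hypercontractivity then gives $L^\infty$-mixing after time $O(L^2\log\log\mu_{\min}^{-1})=O(L^2\log L)$. Hence $\norm{\phi_L}_{L^\infty}\lesssim\norm{h}_{L^\infty}\,t_{\mathrm{mix}}\lesssim L^{d+2}\log L$.

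\textbf{For item (4), your last step is circular.} Splitting $G$ into its projection on $\F_{\Lambda_L^-}$-measurable functions plus a remainder reduces nothing. The remainder has no more energy than $G$ and the same pairing with $\mathbf g$. Bounding that pairing ``by Cauchy--Schwarz against $H^{-1}$-type smallness'' is exactly the dual-norm statement (4). Closeness of flux averages over $\Lambda_L$, which amounts to testing with affine $G$ only, does not control general $G$.

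The missing bridge is the maximizer $u$ of the \emph{unconstrained} dual problem
\[
\nu^*(\Lambda_L,\D e_i):=\frac{1}{|\Lambda_L|}\sup_{w}\sum_{b\in\ov{\Lambda_L^*}}\Er\Bigl[-\tfrac12 c_b(\pi_bw)^2+\pi_b\ell_{\D e_i}\,\pi_bw\Bigr].
\]
Its Euler--Lagrange equation $\sum_b\Er[c_b\,\pi_bu\,\pi_bw]=\sum_b\Er[\pi_b\ell_{\D e_i}\,\pi_bw]$ holds for \emph{every} $w$. Set $v:=\ell_{e_i}+\phi_L$ and $\nu(\Lambda_L,e_i):=\frac{1}{2|\Lambda_L|}\sum_b\Er[c_b(\pi_bv)^2]$. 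Then for all $G$
\[
\sum_b\Er\bigl[(\pi_bG)\,\mathbf g_{L,e_i,b}\bigr]=\sum_b\Er\bigl[c_b\,\pi_bG\,\pi_b(v-u)\bigr].
\]
Using also $\sum_b\Er[\pi_b\ell_{\D e_i}\,\pi_b\phi_L]=0$, one gets
\[
\frac{1}{2|\Lambda_L|}\sum_b\Er\bigl[c_b(\pi_b(v-u))^2\bigr]=\nu(\Lambda_L,e_i)+\nu^*(\Lambda_L,\D e_i)-\frac{1}{|\Lambda_L|}\sum_b\Er\bigl[\pi_b\ell_{e_i}\,\pi_b\ell_{\D e_i}\bigr]=:J_L .
\]
Cauchy--Schwarz then gives (4) with $CL^{-\alpha}$ replaced by $(2\lambda J_L)^{1/2}$. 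Conversely, taking $G=v-u$ shows that (4) forces $J_L\lesssim L^{-2\alpha}$.

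So (4) is equivalent to algebraic decay of $J_L$, which is what the multiscale argument of \cite{FGW24} must deliver. Orthogonality to interior gradients plays no further role. Your multiscale sketch is only a heuristic for that input, since without a random environment ``variance of spatial averages'' would have to be reinterpreted. That is acceptable only as a citation, which is exactly how the paper uses it.
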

	
	\subsection{Dynkin's formula}
	Inspired by \cite{fun96}, we define the corrected process $Z^N(f)$ for every test function $f$,  
    \begin{equation}\label{eq.corrector}
        Z^N(f) := 2N^{-1-d}\sum_{\substack{ z\in \mathcal Z_L\\ }}\sum_{y:y\notin \Lambda_L^z} \bar\eta^N(y)\phi_L^z\left(\eta^N\right)\cdot{\nabla_{1}^{N}f\left(\frac{z}{N},\frac{y}{N}\right)}.
    \end{equation}
	The length $L \equiv L(N)$ depends on $N$ and satisfies $1 \ll L(N) \ll N$. We assume the usual condition in the following paragraphs
	\begin{hypothesis}\label{hyp}
		$L(N) \xrightarrow{N \to \infty} +\infty, \frac{L^{100}(N)}{N} \xrightarrow{N \to \infty} 0$. 
	\end{hypothesis}
    The first observation is that, the term $Z^N(f)$ is indeed small.
    \begin{lemma}\label{lemma.Z}
        Under the Hypothesis~\ref{hyp}, for every $t >0$, we have $Z^N_t(f) \xrightarrow[N \to \infty]{L^2} 0$. 
    \end{lemma}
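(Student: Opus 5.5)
Since the law of $\eta^N_t$ is $\Pr$ for every $t$, I only need to show $\Er[Z^N(f)^2]\to 0$ at a fixed time. I expand the square:
\begin{align*}
\Er\big[Z^N(f)^2\big] = 4N^{-2-2d}\sum_{z,z'\in\mathcal Z_L}\ \sum_{y\notin\Lambda_L^z}\ \sum_{y'\notin\Lambda_L^{z'}} \Er\Big[\bar\eta(y)\bar\eta(y')\,\big(\phi_L^z\cdot\nabla_1^Nf(\tfrac zN,\tfrac yN)\big)\big(\phi_L^{z'}\cdot\nabla_1^Nf(\tfrac{z'}N,\tfrac{y'}N)\big)\Big].
\end{align*}
The key structural input is Lemma~\ref{lem.Homogenization}~\eqref{lem.Correctorlocal}: $\phi^z_L$ is $\F_{(\Lambda_L^z)^-}$-measurable and has mean zero. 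Because $y\notin\Lambda_L^z$, the factor $\bar\eta(y)$ is independent of $\phi^z_L$. I then sort the terms by the relative position of the four objects $y,y',\Lambda^z_L,\Lambda^{z'}_L$, and use product-measure independence together with $\Er[\bar\eta]=0$ and $\Er[\phi^z_L]=0$.

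\emph{Case $z\neq z'$.} The boxes are disjoint, so $\phi^z_L$ and $\phi^{z'}_L$ are independent. If $y\notin\Lambda^{z'}_L$ and $y\neq y'$, then $\bar\eta(y)$ is independent of everything else, and the term vanishes. The same holds symmetrically in $y'$. If $y=y'$, the site lies outside both boxes, so $\phi^z_L$ is independent of the rest and the term vanishes. The surviving terms have $y\in\Lambda^{z'}_L$ and $y'\in\Lambda^{z}_L$. In that case the expectation factorizes as $\Er[\bar\eta(y')\phi^z_L]\,\Er[\bar\eta(y)\phi^{z'}_L]$. By Cauchy--Schwarz and Lemma~\ref{lem.Homogenization}~\eqref{lem.CorrectorLp}, each factor is at most $C L^{(d+2)/2}$ in absolute value. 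Summing over $y\in\Lambda^{z'}_L$ and $y'\in\Lambda^z_L$ gives $L^{2d}$ choices. With $\|\nabla^N_1 f\|_\infty\le C$, this case contributes at most
\[
C N^{-2-2d}\,|\mathcal Z_L|^2 L^{2d}L^{d+2} = C N^{-2}L^{d+2}.
\]
A sharper bound is available: $\sum_{y\in\Lambda}\Er[\bar\eta(y)\phi]=\Er\big[(\sum_y\bar\eta(y))\phi\big]$ is bounded by $\sqrt{L^d}\,\|\phi\|_{L^2}$. The crude bound above is already enough, though.

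\emph{Case $z=z'$.} Here $y,y'$ both lie outside the box. If $y\neq y'$, the two factors $\bar\eta(y)$ and $\bar\eta(y')$ are independent of each other and of $\phi^z_L$, so the term vanishes. The diagonal $y=y'$ gives
\[
4N^{-2-2d}\sum_z\sum_y \chi(\rho)\,\Er\big[(\phi^z_L\cdot\nabla_1^Nf)^2\big]\le C N^{-2-2d}\,(N/L)^d\, N^d\, L^{d+2} = C N^{-2-d}\cdot N^d L^2 = CN^{-2}L^{2}.
\]
Combining both cases, $\Er[Z^N_t(f)^2]\le C N^{-2}L^{d+2}$. This tends to zero by Hypothesis~\ref{hyp}, since $L^{d+2}\le L^{100}\ll N$. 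Note that the cruder $L^\infty$ bound from Lemma~\ref{lem.Homogenization}~\eqref{lem.CorrectorLp} would also suffice.

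The main point requiring care is the case bookkeeping for $z\neq z'$, specifically checking that every configuration outside the cross case $y\in\Lambda^{z'}_L$, $y'\in\Lambda^z_L$ really vanishes by independence. This rests entirely on the locality statement in Lemma~\ref{lem.Homogenization}~\eqref{lem.Correctorlocal} and on the disjointness of the boxes $\Lambda^z_L$. The sizes of the remaining terms are harmless because of the extra $N^{-2}$ prefactor.
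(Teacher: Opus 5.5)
Your argument is correct and is essentially the paper's proof. You expand the second moment and use the locality of $\phi^z_L$ together with product-measure independence. (Its zero mean comes from the definition of the corrector, not from Lemma~\ref{lem.Homogenization}~\eqref{lem.Correctorlocal}.) This kills everything except the diagonal terms ($z=z'$, $y=y'$) and the cross terms ($y\in\Lambda^{z'}_L$, $y'\in\Lambda^{z}_L$). The only difference is that you bound the cross terms through $\|\phi_L\|_{L^2}^2\le CL^{d+2}$ and obtain $CN^{-2}L^{d+2}$, whereas the paper uses the cruder bound $C_fN^{-2}\|\phi_L\|_\infty^2$. Both vanish under Hypothesis~\ref{hyp}.
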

    \begin{proof}
        With this Hypothesis, the $L^2$-norm of $Z^N(f)$ is given by 
    \begin{align*}
        \E_\rho\left[\left(Z^N(f)\right)^2\right]&= 4N^{-2-2d}\E_\rho\left[\left(\sum_{\substack{ z\in \mathcal Z_L}}\sum_{y:y\notin \Lambda_L^z} \bar\eta^N(y)\phi_L^z\left(\eta^N\right)\cdot{\nabla_{1}^{N}f\left(\frac{z}{N},\frac{y}{N}\right)}\right)^2\right]\\
        &\leq C_fN^{-2-2d}\sum_{\substack{ z\in \mathcal Z_L}}\sum_{y:y\notin \Lambda_L^z} \E_\rho\left[\bar\eta^N(y)^2\right]\E_\rho\left[\left|\phi_L^z\left(\eta^N\right)\right|^2\right]\notag\\
        &\quad +C_fN^{-2-2d}\sum_{\substack{ z_1,z_2\in \mathcal Z_L\\ z_1\neq z_2}}\sum_{y_1\in \Lambda_L^{z_2}}\sum_{y_2\in \Lambda_L^{z_1}} \E_\rho\left[\left|\bar\eta^N(y_1)\phi_L^{z_2}\right|\right]\E_\rho\left[\left|\bar\eta^N(y_2)\phi_L^{z_1}\right|\right]\notag\\
        &\leq C_fN^{-2}\norm{\phi_{L}}_{\infty}^2\notag,
    \end{align*}
    which shows that the corrector is indeed a small term in $L^2$.
    \end{proof}
    
      We then denote by $\mathcal Q^N$ the modified quadratic field
    \begin{equation}\label{eq.correction}
        \mathcal{Q}^N(f) :=Q^N(f)+Z^N(f). 
    \end{equation}
	By Dynkin's formula, for every test function $f$, we have the decomposition
	\begin{equation}\label{eq.Dynkin}
		\mathcal{Q}_t^N(f)=\mathcal{Q}_0^N(f)+\mathcal{A}_t^N(f)+\M_t^N(f),
	\end{equation}
	where $\mathcal A^N \equiv (\mathcal A^N_t)_{t \geq 0}$ is the drift term
	\begin{equation*}
		\mathcal A_t^N(f) :=\int_0^t \L_N \mathcal Q_s^N(f)\,\d s.
	\end{equation*}
	Here $\L_N$ defined in \eqref{eq.L_N} is the generator of the speed-change Kawasaki dynamics. The martingale term $\M^N \equiv (\M^N_t)_{t \geq 0}$ is defined as
	\begin{equation*}
		\M_t^N(f) :=\mathcal{Q}_t^N(f)-\mathcal{Q}_0^N(f)-\mathcal{A}_t^N(f).
	\end{equation*}
	The rest of the paper is devoted to the study of $\left(\M^N, \mathcal A^N, Z^N\right)$, which allows us to understand the limit behavior of the quadratic field.

	\section{Martingale}\label{section.martingale}
	We study the martingale term $\M^N$ in this section, which states the following result.
	\begin{proposition}[Limit of martingale]\label{prop.Characterization for the convergence of the martingale term}
		For $f\in C^\infty\left(\mathbb{T}^{2d}\right)$, the sequence of martingales $\{\mathcal M^N(f)\}_{N \in \N}$ admits a subsequential limit $\mathcal M(f)$ as $N\to\infty$ in $D([0, T], \mathbb{R})$. Every such limit point $\mathcal M(f)$ is a martingale in $C\left([0,T],\R\right)$ and satisfies
		\begin{multline*}
			\left\langle{\mathcal M}(f)\right\rangle_t= \int_0^t\int_{\T^d}\left\{\Y_s\left(\nabla_{1}f(x,\cdot)\right)+\Y_s\left(\nabla_{2}f(\cdot,x)\right)\right\}\\\cdot \,\cc(\rho)\left\{\Y_s\left(\nabla_{1}f(x,\cdot)\right)+\Y_s\left(\nabla_{2}f(\cdot,x)\right)\right\}\,\d x\,\d s.
		\end{multline*}
	\end{proposition}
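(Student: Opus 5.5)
We will work with the carré du champ. For a jump process with generator $\L_N$, the predictable quadratic variation of $\M^N(f)$ is
\begin{equation*}
\langle \M^N(f)\rangle_t=\int_0^t N^2\sum_{b}c_b(\eta^N_s)\left(\pi_b \mathcal Q^N_s(f)\right)^2\d s .
\end{equation*}
For a bond $b=\{x,x+e_i\}$, exchanging $\eta(x)$ and $\eta(x+e_i)$ changes $Q^N(f)$ by
\[
2N^{-d-1}\bigl(\eta(x+e_i)-\eta(x)\bigr)\sum_{y\neq x,x+e_i}\bar\eta(y)\nabla^N_{1,i}f(x/N,y/N),
\]
up to a diagonal error of order $N^{-d}$. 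The change of $Z^N(f)$ is $2N^{-1-d}\sum_{y}\bar\eta(y)\,\pi_b\phi_L^{z(x)}\cdot\nabla_1^Nf(z/N,y/N)$, plus a small term coming from $y=x$ outside the box. Combining the two,
\[
N\pi_b\mathcal Q^N(f)\approx 2N^{-d/2}\,\pi_b\bigl(\ell_{e_i\cdot}+\phi_L^{z(x)}\bigr)\cdot \mathcal Y^N\bigl(\nabla_1 f(z/N,\cdot)\bigr),
\]
with $\nabla_1 f$ frozen at the block centre. The error in freezing $x\mapsto z$ is $O(L/N)$. Since $f$ is symmetric, $2\nabla_1 f(x,\cdot)=\nabla_1f(x,\cdot)+\nabla_2 f(\cdot,x)$.

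\textbf{Step 1: the quadratic variation.} The quadratic variation becomes a Riemann sum over blocks:
\[
\int_0^t \sum_z N^{-d}\,\mathcal Y^N_s(\Phi_z)\cdot\Bigl[\sum_{b\in\bar{(\Lambda^z_L)^*}}c_b\,\pi_b(\ell+\phi^z_L)\otimes\pi_b(\ell+\phi^z_L)\Bigr]\mathcal Y^N_s(\Phi_z)\,\d s,
\]
where $\Phi_z=\nabla_1 f(z/N,\cdot)+\nabla_2 f(\cdot,z/N)$. The bracketed local function depends only on $\eta$ in $\Lambda_L^z$. We then replace it by its mean $L^d\,\cc_L$, and we will have $\cc_L\to\cc(\rho)$ as $L\to\infty$ by the variational definition of $\phi_L$ and Lemma~\ref{lem.Homogenization}(3),(4).

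This replacement is a law of large numbers across blocks. Conditioning on $\mathcal Y^N(\Phi_z)$ is the subtle point, because that field also depends on the sites inside $\Lambda^z_L$. We handle it by removing the box contribution from $\mathcal Y^N(\Phi_z)$, which costs $N^{-d/2}L^{d}\cdot$, a negligible amount. After this, independence of disjoint blocks under $\Pr$ applies. Computing the $L^2(\Pr)$ norm of the fluctuation, with $\|\phi_L\|_\infty\le CL^{d+2}\log L$ from Lemma~\ref{lem.Homogenization}(2) and Hypothesis~\ref{hyp}, shows that the error vanishes.

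\textbf{Step 2: identification.} This step gives $\int_0^t\int \mathcal Y^N_s(\Phi_x)\cdot\cc(\rho)\mathcal Y^N_s(\Phi_x)\,\d x\,\d s$ plus an $o(1)$ error in $L^1$. By Proposition~\ref{prop.linear fluctuation} and the continuous mapping theorem, this expression converges jointly with $\mathcal Y^N$ (see Remark~\ref{remark.probability space}) to the claimed quadratic functional of $\mathcal Y$.

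\textbf{Step 3: tightness and continuity.} We apply Proposition~\ref{Prop.Tightness.real-valued}. Condition (i), and uniform integrability in $L^2$, follow from $\E[\M^N_t(f)^2]=\E\langle\M^N(f)\rangle_t\le Ct$, which holds because $\E[\mathcal Y(\Phi)^4]$ is bounded uniformly. Condition (ii) follows from Aldous' criterion applied to the quadratic variation. The integrand has bounded second moment, so $\E|\langle\M^N\rangle_{t+r}-\langle\M^N\rangle_t|\le Cr$, and the Burkholder–Davis–Gundy inequality controls the increments. Continuity of the limit comes from the jump size bound
\[
\sup_t|\Delta\M^N_t(f)|\le \sup_{b}|\pi_b\mathcal Q^N(f)|\le CN^{-1}\Bigl(N^{-d}\sum_y|\bar\eta(y)|\Bigr)+CN^{-1-d}\|\phi_L\|_\infty N^d\to0,
\]
using Hypothesis~\ref{hyp}.

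\textbf{Step 4: martingale property of the limit.} We use uniform integrability: a fourth-moment BDG bound gives $\E[\sup_t\M^N_t(f)^4]$ bounded, and this bound in turn uses that the quadratic variation has bounded second moment. Passing to the limit, $\M(f)^2-\langle\M(f)\rangle$ is a martingale with respect to the filtration generated by $(\M,\mathcal Y)$.

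\textbf{Main obstacle.} The main obstacle is Step 1, namely the block averaging of the local conductance weighted by the random quadratic coefficients $\mathcal Y^N(\Phi_z)$, together with showing $\cc_L\to\cc(\rho)$. I would first use Lemma~\ref{lem.Homogenization}(4) with $G=\ell_{\xi}+\phi_L$ to identify the mean. Then I would bound the variance across blocks, splitting it into a diagonal term (the same block) and an off-diagonal term; the off-diagonal term vanishes by independence once the box contribution has been removed.
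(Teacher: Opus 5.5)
Your outline is sound, but at the central step it takes a different route from the paper.

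\textbf{The paper's route.} It replaces $\pi_{x,x+e_i}\mathcal Q^N(f)$ by $2N^{-1-d/2}\Y^N(\nabla^N_1 f(x/N,\cdot))\cdot\mathbf v_{x,i}$ (Lemma~\ref{lemma.R_2.expectation}) and then introduces a third scale $L\ll K\ll N$. Smoothness of $x\mapsto\Y^N(\nabla^N_1 f(x/N,\cdot))$ lets it freeze the random weight on $K$-boxes. The $K$-box averages of $c_{a,a+e_i}\mathbf v_{a,i}\otimes\mathbf v_{a,i}$ concentrate by splitting into sublattices spaced $2L$ apart and applying Burkholder's inequality. The weight is then decoupled simply by Cauchy--Schwarz, and the mean is identified by quoting the quantitative homogenization theorem of \cite{FGW24}.

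\textbf{Your route.} You keep the block-frozen weight $\Y^N(\Phi_z)$, decouple it from $A_z=\sum_b c_b\mathbf v_b\otimes\mathbf v_b$ by deleting the box, and compute the variance of $\sum_z N^{-d}W_z\cdot(A_z-\E_\rho A_z)W_z$ directly, with no intermediate scale. Your identification of the mean is also different, and it works. Take $G=\ell_{e_j}+\phi_{L,e_j}$ in Lemma~\ref{lem.Homogenization}(\ref{lem.FluxReplacement}), and note that $\sum_b\E_\rho[\pi_b\phi_{L,e_j}\,\pi_b\ell_{\D e_i}]=-2\E_\rho[\phi_{L,e_j}\sum_b\pi_b\ell_{\D e_i}]=0$. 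This holds because the sum telescopes to sites outside $(\Lambda_L)^-$, while $\phi_{L,e_j}$ is centered and $\F_{(\Lambda_L)^-}$-measurable. One then gets $L^{-d}\sum_b\E_\rho[c_b\mathbf v_b^{j}\mathbf v_b^{i}]=2\chi(\rho)\D_{ij}(\rho)+O(L^{-\alpha})=\cc_{ij}(\rho)+O(L^{-\alpha})$. This uses only the lemmas stated in the paper and the Einstein relation, with Lemma~\ref{lem.Homogenization}(\ref{lem.CorrectorH1}) bounding the energy factor.

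\textbf{Comparison.} The paper's route never tracks correlations between $\Y^N$ and local functions, at the price of an extra scale and losses in $\norm{\phi_L}_\infty$. Yours is more direct but needs that bookkeeping. Your tightness argument (BDG plus the deterministic jump bound $O(N^{-1}\norm{\phi_L}_\infty)$) is a harmless variant of the paper's fourth-moment modulus estimate.

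\textbf{Points stated too quickly.} First, the off-diagonal terms do not vanish by independence. After deleting $\Lambda_L^z$, the weight $W_z$ still contains the sites of $\Lambda_L^{z'}$ on which $A_{z'}$ depends, and $W_{z'}$ still contains those of $\Lambda_L^{z}$. Write $W_z=U_z+V_{z,z'}$ with $V_{z,z'}$ the contribution of $\Lambda_L^{z'}$. Every term lacking $V_{z,z'}$ or $V_{z',z}$ does vanish, and the survivors factor, e.g.\ as $\E_\rho[U_zU_{z'}]\,\E_\rho[V_{z,z'}(A_{z'}-\E_\rho A_{z'})]\,\E_\rho[V_{z',z}(A_z-\E_\rho A_z)]$. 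Summed over pairs, these are of order $N^{-d}$ times a power of $L$, hence negligible under Hypothesis~\ref{hyp}. Since $A_z$ depends on the $\mathbf r$-enlarged box, neighbouring blocks must also be grouped with the diagonal or handled by a coloring.

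Second, the uniform bounds $\E_\rho\langle\M^N(f)\rangle_t\le Ct$ and $\sup_N\E_\rho[\mathcal B^N(f)^2]<\infty$ do not follow from fourth moments of $\Y^N$ alone. Any bound on $A_z$ through $\norm{\phi_L}_\infty$ grows with $L$. They require the same decoupling together with $\vert\E_\rho A_z\vert\le CL^d$ from Lemma~\ref{lem.Homogenization}(\ref{lem.CorrectorH1}). This is exactly what Lemma~\ref{lemma.I_1.expectation} and the case $k=0$ in the proof of Proposition~\ref{prop.B.bound.L1.L2} supply.

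Finally, the statement covers all $f\in C^\infty(\T^{2d})$, so you should add the reduction to the symmetric part of $f$.
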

	
	Recall that for every test function $f$, the martingale term ${\mathcal M}^N(f)$ has the form
	\begin{equation*}
		\mathcal M_t^N(f)=\mathcal{Q}_t^N(f)- \mathcal{Q}_0^N(f)-\int_0^t \mathcal{L}_{N}\mathcal{Q}_s^N(f)\,\d s.
	\end{equation*}
	Its associated quadratic variation can be expressed as
	\begin{equation*}
		\left\langle{\mathcal M}^N(f)\right\rangle_t=\int_0^t \mathcal{B}_s^N(f)\,\d s,
	\end{equation*}
	where $\mathcal{B}_s^N(f)$ is \emph{the carr\'e du champ operator}
	\begin{align}\label{eq.quadratic variation}
		\mathcal{B}_s^N(f)&=\L_{N}\left(\mathcal{Q}_s^N(f)^2\right)-2 \mathcal{Q}_s^N(f)\L_{N}\mathcal{Q}_s^N(f)\\
		&=N^{2}\sum_{i=1}^d\sum_{x}c_{x,x+e_i}\left(\eta^N_s\right)\left(\pi_{x,x+e_i}\mathcal{Q}_s^N(f)\right)^2\notag.
	\end{align}
	The tightness will be established in Section~\ref{sec.M.tight}, and the characterization of the limit is given in Section~\ref{sec.M.limit}. Moment estimates about $\mathcal{B}^N(f)$ are presented in Section~\ref{sec.M.moment} and Appendix.

	\subsection{Tightness}\label{sec.M.tight}
	\begin{proposition}\label{prop.tightness.quadratic.M}
		For every test function $f$, the sequence of martingales and the corresponding quadratic variation  
        \[
        \left\{\mathcal M^N_t(f), t\in [0,T]\right\}_{N \in \N}, \quad \left\{\la\mathcal M^N(f)\ra_t, t\in [0,T]\right\}_{N \in \N}
        \]
         are tight in $D([0, T], \mathbb{R})$ and and all limit points are concentrated on $C\left([0,T],\R\right)$. If we have the following weak convergence in $D\left([0,T], \S' \left(\T^{2d}\right)\right)$ along a subsequence $N_k\to\infty$
         \begin{align}\label{eq.M.Nk.convergence}
			(\M^{N_k}_t)_{t \in [0,T]} \xRightarrow{N_k \to \infty} (\M_t)_{t \in [0,T]},
		\end{align}
         then the limit process $\mathcal M_t$ is a continuous martingale and the following weak joint convergence holds 
         \begin{equation}\label{eq.M.Nk.joint}
         \Ll(\mathcal M^{N_k}_t,\la\mathcal M^{N_k}\ra_t\Rr)_{t\in [0,T]}  \xRightarrow{N_k \to \infty} \Ll(\M_t,\la\mathcal M\ra_t\Rr)_{t \in [0,T]}.
         \end{equation}
	\end{proposition}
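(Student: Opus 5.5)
We will verify the two conditions of Proposition~\ref{Prop.Tightness.real-valued} separately for $\la\M^N(f)\ra$ and for $\M^N(f)$, and then pass to the limit in the martingale property using uniform integrability.

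\textbf{Step 1: tightness of the quadratic variation.} Since $\la\M^N(f)\ra_t=\int_0^t\mathcal B^N_s(f)\,\d s$, the key input is a uniform moment bound
\begin{equation*}
\sup_N \E_\rho\left[\mathcal B^N_s(f)^2\right]\le C_f,
\end{equation*}
which we expect from the moment estimates of Section~\ref{sec.M.moment} and the Appendix. To see why it should hold, write $\pi_{x,x+e_i}\mathcal Q^N(f)$ explicitly. The exchange on the bond $\{x,x+e_i\}$ changes $Q^N(f)$ by
\begin{equation*}
-2N^{-d}(\eta(x+e_i)-\eta(x))\sum_{y\ne x,x+e_i}\bar\eta(y)\,N^{-1}\nabla^N_{1,i}f(x/N,y/N),
\end{equation*}
which is of order $N^{-1-d/2}$ in $L^2$. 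The change of $Z^N(f)$ is $N^{-1-d}$ times terms involving $\pi_b\phi_L^z$ and $\bar\eta(y)$, controlled by Lemma~\ref{lem.Homogenization}(2),(3). Squaring, multiplying by $N^2$ and summing over $N^d$ bonds gives $\mathcal B^N_s(f)=O(1)$. The bound on the fourth moment uses the Gaussian-like behaviour of $\sum_y\bar\eta(y)g(y)$ under the product measure, together with Hypothesis~\ref{hyp} to absorb the polynomial powers of $L$ from $\|\phi_L\|_\infty$. Given this, stationarity and Cauchy--Schwarz yield
\begin{equation*}
\E\left[\left|\la\M^N(f)\ra_t-\la\M^N(f)\ra_s\right|^2\right]\le C_f|t-s|^2.
\end{equation*}
By Kolmogorov's criterion, this gives condition (ii) of Proposition~\ref{Prop.Tightness.real-valued} and continuity of all limit points; condition (i) is immediate from the $L^2$ bound.

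\textbf{Step 2: tightness of $\M^N(f)$.} We use Burkholder--Davis--Gundy. The jumps of $\M^N(f)$ equal the jumps of $\mathcal Q^N(f)$, which are bounded deterministically by
\begin{equation*}
\sup_b\left|\pi_b\mathcal Q^N(f)\right|\le C_f\left(N^{-1}+N^{-1-d}L^{d}\|\phi_L\|_\infty\right)\to0
\end{equation*}
under Hypothesis~\ref{hyp}. Hence
\begin{equation*}
\E\left[|\M^N_t-\M^N_s|^4\right]\le C\,\E\left[\left(\la\M^N\ra_t-\la\M^N\ra_s\right)^2\right]+C\,\E\left[\sup_{r\in[s,t]}|\Delta\M^N_r|^4\right]\le C_f|t-s|^2+o(1).
\end{equation*}
The Aldous or Kolmogorov-type criterion, combined with the vanishing maximal jump, then gives C-tightness. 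Point tightness follows from $\E[(\M^N_t)^2]=\E[\la\M^N\ra_t]\le C_ft$.

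\textbf{Step 3: identification of the limit.} Along $N_k$ we take a joint subsequential limit of $(\M^{N_k},\la\M^{N_k}\ra)$, which is possible by the tightness in Steps 1--2. The fourth-moment bounds imply that $\M^{N_k}_t$ and $(\M^{N_k}_t)^2-\la\M^{N_k}\ra_t$ are uniformly integrable. Their martingale property is expressed through expectations of bounded continuous functionals of the path up to time $s$, and these pass to the limit. Hence $\M$ and $\M^2-A$ are continuous martingales, where $A$ is the limit of $\la\M^{N_k}\ra$. Since $A$ is continuous, increasing and adapted, uniqueness of the Doob--Meyer decomposition gives $A=\la\M\ra$, which is \eqref{eq.M.Nk.joint}.

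\textbf{Main obstacle.} The hard part is the uniform fourth-moment bound on $\mathcal B^N(f)$, specifically the contribution of the corrector term $Z^N$. Its $L^\infty$ norm carries a factor $L^{d+2}\log L$, and it must be killed by the extra $N^{-1}$ using $L^{100}\ll N$. I would first try the crude bound via $\|\phi_L\|_\infty$ and defer any finer estimate to the Appendix.
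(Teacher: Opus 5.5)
Your proof is correct, and its tightness part follows the paper. Both arguments reduce everything to the uniform $L^1$ and $L^2$ bounds on $\mathcal B^N(f)$ from Proposition~\ref{prop.B.bound.L1.L2}, then apply Doob and a fourth-order BDG estimate on a grid of mesh $r$. For $\la\M^N(f)\ra$, your Kolmogorov argument is equivalent to the paper's grid argument using monotonicity.

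The differences are in the details.

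\textbf{Step 2.} You use BDG in the form valid for jump martingales, with the extra term $\E[\sup_{u\in[s,t]}|\Delta\M^N_u(f)|^4]$. At order four, BDG controls increments by $[\M^N(f)]$, not by $\la\M^N(f)\ra$, so this term is genuinely needed. The paper's display \eqref{ineq.Quadratic.Doob.BDG} omits it, harmlessly since it is small.

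Your deterministic jump bound is wrong as written, though. An exchange on a bond of $\Lambda_L^z$ changes $\phi_L^z$ by up to $2\|\phi_L\|_\infty$. This change multiplies $2N^{-1-d}\sum_{y\notin\Lambda_L^z}\bar\eta^N(y)\nabla_1^Nf(z/N,y/N)$, which is a sum of $N^d$ bounded terms. So the correct bound is $C_fN^{-1}(1+\|\phi_L\|_\infty)$, not $C_fN^{-1-d}L^d\|\phi_L\|_\infty$. It still tends to zero by Lemma~\ref{lem.Homogenization}(2) and Hypothesis~\ref{hyp}, so nothing breaks. Because of this additive $o(1)$, Kolmogorov's criterion does not apply verbatim; you should use the paper's grid argument. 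It gives $\limsup_N\P[\omega(\M^N(f),r)\ge\varepsilon]\le C_fTr\varepsilon^{-4}$.

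\textbf{Step 3.} You identify the limit by hand. You use uniform integrability of $\M^{N_k}_t(f)$ and $(\M^{N_k}_t(f))^2-\la\M^{N_k}(f)\ra_t$ (from the fourth-moment bound), pass to the limit in the martingale identities, and invoke uniqueness of the Doob--Meyer decomposition for the continuous limit. The paper instead cites Jacod--Shiryaev: IX.1.12 for the martingale property, and VI.6.13 together with VI.6.26 for joint convergence via predictable uniform tightness. The citation is shorter. However, VI.6.26 concerns the optional bracket $[\M^{N_k}(f)]$, whereas your route lands directly on $\la\M^{N_k}(f)\ra$ without an extra jump estimate.

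Finally, you should add one sentence to get convergence along the whole of $N_k$ rather than a further subsequence. Since $\la\M(f)\ra$ is a measurable functional of the path of $\M(f)$, the joint limit law is uniquely determined.
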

	The tightness is reduced to the moment estimates below, whose proof is postponed to the next section.
	\begin{proposition}\label{prop.B.bound.L1.L2}
		For every test function $f$ and every $t > 0$, the carr\'e du champ $\mathcal B^N(f)$ is bounded in $L^1$ and $L^2$ uniformly with respect to $N$.
	\end{proposition}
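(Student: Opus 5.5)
We need to bound $\mathcal B^N(f)$ in $L^1$ and $L^2$ uniformly in $N$. By stationarity under $\Pr$ the law of $\mathcal B^N_s(f)$ does not depend on $s$, so it suffices to bound $\Er[\mathcal B^N(f)]$ and $\Er[\mathcal B^N(f)^2]$ for $\eta\sim\Pr$. Since $\|\mathcal B\|_{L^1}\le\|\mathcal B\|_{L^2}$, the $L^2$ bound is enough, but we will compute $L^1$ first as a warm-up.

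\emph{Computing the jumps.} Fix a bond $b=\{x,x+e_i\}$ and split $\pi_b\mathcal Q^N(f)=\pi_b Q^N(f)+\pi_b Z^N(f)$. For the quadratic part, a direct computation with the symmetry of $f$ gives
\begin{equation*}
\pi_b Q^N(f)=-2N^{-d-1}\bigl(\eta(x+e_i)-\eta(x)\bigr)\sum_{y\neq x,x+e_i}\bar\eta(y)\,\nabla^N_{1,i}f\bigl(\tfrac xN,\tfrac yN\bigr),
\end{equation*}
up to an error of order $N^{-d-1}$ from the diagonal terms. Call the sum $S_{x,i}(\eta):=N^{-d/2}\sum_y\bar\eta(y)\nabla^N_{1,i}f(x/N,y/N)$; it is a linear field with bounded test function.

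For the corrector part, $\pi_b Z^N(f)$ has two contributions. When $b$ meets a site $y$, we get $N^{-1-d}(\eta(x+e_i)-\eta(x))\phi_L^{z}\cdot\nabla_1^Nf$ summed over $z$, of size $N^{-1-d}\cdot (N/L)^d\|\phi_L\|_\infty$. When $b$ lies in the box $\Lambda^{z(x)}_L$, we get $N^{-1-d}\sum_{y\notin\Lambda_L^z}\bar\eta(y)\,\pi_b\phi_L^{z}\cdot\nabla^N_1f(z/N,y/N)=N^{-1-d/2}\,\pi_b\phi_L^z\cdot \tilde S_{z}$, where $\tilde S_z$ is again a linear field. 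Lemma~\ref{lem.Homogenization}(1) ensures $\phi^z_L$ only depends on the interior of the box, so $\tilde S_z$ is independent of $\pi_b\phi^z_L$ (the excluded $y$ are in the box).

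\emph{$L^1$ bound.} We have $\mathcal B^N\le 2N^2\lambda\sum_b[(\pi_bQ^N)^2+(\pi_bZ^N)^2]$. The first term gives $CN^{-d}\sum_{x,i}S_{x,i}^2$, whose expectation is bounded by $\chi(\rho)\|\nabla f\|_\infty^2$. For the second, by independence, the expectation is
\begin{equation*}
CN^{-d}\sum_{z}\sum_{b\in\bar{(\Lambda_L^z)^*}}\Er[(\pi_b\phi^z_L)^2]\,\Er[|\tilde S_z|^2]\le CN^{-d}(N/L)^d L^d = C,
\end{equation*}
by Lemma~\ref{lem.Homogenization}(3). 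The bond-meets-site contribution is $N^2\cdot N^d\cdot N^{-2}L^{-2d}\|\phi_L\|_\infty^2$ per unit, which is negligible under Hypothesis~\ref{hyp} since $\|\phi_L\|_\infty\le CL^{d+2}\log L$. Some care is needed because the jump can be adjacent to several boxes; this is handled by the bounded overlap.

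\emph{$L^2$ bound (main obstacle).} For the quadratic part, we need $\Er[(N^{-d}\sum_x S_x^2)^2]\le \sup_x\Er[S_x^4]$, which is bounded by the standard fourth-moment computation for sums of independent centered bounded variables, giving $C\|\nabla f\|_\infty^4$. The difficulty lies in the corrector part:
\begin{equation*}
\Er\Bigl[\Bigl(N^{-d}\sum_z\sum_b(\pi_b\phi^z_L)^2\tilde S_z^2\Bigr)^2\Bigr].
\end{equation*}
Here, $L^2$ control of $\pi_b\phi$ alone is insufficient since fourth moments of $\pi_b\phi$ appear. We will use Cauchy--Schwarz across $z$ pairs. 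For $z\ne z'$, the variables $\pi\phi^z$ and $\pi\phi^{z'}$ are independent, but $\tilde S_z$ correlates with $\pi\phi^{z'}$. To address this, we split $\tilde S_z$ into the part outside $\Lambda^z\cup\Lambda^{z'}$ and a remainder of size $N^{-d/2}L^d$. We bound diagonal pairs by the crude $L^\infty$ estimate: $N^{-2d}(N/L)^d L^{2d}\|\phi\|_\infty^2\cdot L^{d}\|\phi\|^2_{L^2}$, which is $N^{-d}\mathrm{poly}(L)\to0$ by Hypothesis~\ref{hyp}. For off-diagonal pairs, independence yields a product of the $L^1$-type bounds, plus a remainder controlled by $\|\phi\|_\infty$ and powers of $L/N$. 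This step, which involves decoupling the linear field from the corrector via locality (Lemma~\ref{lem.Homogenization}(1)) and absorbing polynomial losses in $L$ using $L^{100}\ll N$, is the main technical point and would be detailed in the Appendix.
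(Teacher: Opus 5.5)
\textbf{There is a gap in the term where the bond hits the linear sites of other boxes.}

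Your treatment of the quadratic part $\pi_bQ^N(f)$ is sound, and so is your treatment of the in-box part $N^{-1-d/2}\,\pi_b\phi^z_L\cdot\tilde S_z$ of $\pi_bZ^N(f)$. Separating them, rather than working with the combined vector $\mathbf v_{x,i}$ as the paper does, costs nothing. For $z\neq z'$ your split of $\tilde S_z$ even closes with Lemma~\ref{lem.Homogenization}(3) alone, because the piece of $\tilde S_z$ supported in $\Lambda^{z'}_L$ is deterministically $O(N^{-d/2}L^d)$.

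The gap is the remaining piece of $\pi_bZ^N(f)$, call it $R_b$. Here the bond $b=\{x,x+e_i\}$ hits the sites $y$ in the linear factors of boxes $z$ with $b\not\subset\Lambda^z_L$; this is the paper's $\mathcal R^N_{1,1}$. Your count $N^2\cdot N^d\cdot N^{-2}L^{-2d}\|\phi_L\|_\infty^2=N^dL^{-2d}\|\phi_L\|_\infty^2$ is not small: under Hypothesis~\ref{hyp} the prefactor $N^dL^{-2d}$ alone tends to infinity. So the claim that this contribution is negligible is false, and the $L^2$ part does not repair it.

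Two ideas are missing, and both are needed.
\begin{enumerate}
\item \emph{The derivative gain.} When $x,x+e_i\notin\Lambda^z_L$, applying $\pi_b$ to $\sum_{y\notin\Lambda^z_L}\bar\eta(y)\nabla^N_1f(z/N,y/N)$ gives $(\eta(x+e_i)-\eta(x))\{\nabla^N_1f(z/N,x/N)-\nabla^N_1f(z/N,(x+e_i)/N)\}$. This carries an extra factor $N^{-1}$.
\item \emph{Orthogonality across boxes.} The sum over the $(N/L)^d$ boxes cannot be handled by the triangle inequality and $\|\phi_L\|_\infty$. Even with the $N^{-1}$ gain that route gives $N^{d-2}L^{-2d}\|\phi_L\|_\infty^2$, which does not close for $d\geq 2$ given $\|\phi_L\|_\infty\le CL^{d+2}\log L$. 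Instead, use that the $\phi^z_L$ are centered and independent across boxes. Being $\F_{(\Lambda^z_L)^-}$-measurable, they are also independent of $\eta(x),\eta(x+e_i)$ whenever the $z$-term is nonzero.
\end{enumerate}
With these, the second moment of the sum over $z$ reduces to $\sum_z\E_\rho[|\phi^z_L|^2]$ times the squared coefficients. Combined with $\|\phi_L\|^2_{L^2}\le CL^{d+2}$, this yields a bound of order $C_f(N^{-2}L^2+N^{-d}L^{d+1})$. The second term comes from the $O(N^dL^{-1})$ bonds crossing a box boundary, where only the at most two adjacent boxes lose the $N^{-1}$ gain. This is exactly how the paper controls $\mathcal R^N_{1,1}$.

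\textbf{The fourth moment of $R_b$ is also unaddressed.} For the $L^2$ bound on $\mathcal B^N(f)$, this piece enters through $\E_\rho\bigl[\bigl(N^2\sum_bR_b^2\bigr)^2\bigr]$, and your $L^2$ paragraph never treats it; it covers only the in-box corrector part. The fix is the pairing form of the same orthogonality. For centered independent $X_z$,
\[
\E_\rho\Bigl[\Bigl(\sum_zX_z\Bigr)^4\Bigr]=\sum_z\E_\rho[X_z^4]+3\sum_{z\neq z'}\E_\rho[X_z^2]\,\E_\rho[X_{z'}^2].
\]
Bound the single-box terms through $\|\phi_L\|_\infty^2\|\phi_L\|_{L^2}^2$ and the paired terms through $\|\phi_L\|_{L^2}^4$, again together with the $N^{-1}$ gain. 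The boundary bonds can then be handled crudely with $\|\phi_L\|_\infty$. With these two repairs, your decomposition does give uniform $L^1$ and $L^2$ bounds on $\mathcal B^N(f)$.
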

	We use Proposition~\ref{prop.B.bound.L1.L2} to give the tightness for martingale $\M^N(f)$.
	
	\begin{proof}[Proof of Proposition~\ref{prop.tightness.quadratic.M}] We need to verify the C-tightness criterion stated in Proposition~\ref{Prop.Tightness.real-valued}.
		As to condition (i), for every $t\in[0,T]$,
		\begin{align*}
			\E_{\rho}\left[\left\vert \mathcal M_t^N(f)\right\vert^2\right]= \E_{\rho}\left[\left\la \mathcal M^N(f)\right\ra_t\right]=\int_0^t \E_{\rho}\left[ \mathcal B_s^N(f)\right]\d s\leq C_ft,
		\end{align*}
		where the last step follows from the stationarity and the $L^1$-boundedness of $\mathcal B^N$ in Proposition~\ref{prop.B.bound.L1.L2}. This gives the tightness for $\M _t^N(f)$ and $\la\M^N(f) \ra_t$ for fixed time $t\in [0,T]$.

		For the condition (ii), by Chebyshev's inequality, we have
		\begin{equation}\label{ineq.Chebyshev}
			\P_{\rho}\left[\omega\left(\M^N(f),r\right)\geq \varepsilon\right]\leq\frac{1}{\varepsilon^4}\E_{\rho}\left[\omega\left(\M^N(f),r\right)^4\right].
		\end{equation}
		We calculate the fourth moment
		\begin{equation*}
			\omega\left(\M^N(f),r\right)^4=\sup_{\substack{ s,t\in[0,T]\\\vert t-s\vert\leq r}}\left\vert \M_t^N(f)-\M_s^N(f)\right\vert^4\\
			\leq C \sup_{\substack{t\in[0,T], t-s\leq r \\s\in\{0,r,\cdots,\lfloor \frac{T}{r} \rfloor r\}}}\left\vert \M_t^N(f)-\M_s^N(f)\right\vert^4.
		\end{equation*}
		Therefore, we have
		\begin{align}\label{eq.martingale.modulus.Quadratic}
			\E_{\rho}\left[\omega\left(\M^N(f),r\right)^4\right]&\leq C\E_{\rho}\Bigg[ \sup_{\substack{t\in[0,T], t-s\leq r \\s\in\{0,r,\cdots,\lfloor \frac{T}{r} \rfloor r\}}}\left\vert \M_t^N(f)-\M_s^N(f)\right\vert^4\Bigg]\\ 
			&\leq C\sum_{s\in\{0,r,\cdots,\lfloor \frac{T}{r} \rfloor r\}}\E_{\rho}\left[\sup_{\substack{t\in[0,T], t-s\leq r }}\left\vert \M_t^N(f)-\M_s^N(f)\right\vert^4\right]\notag.
		\end{align}
		By Doob's inequality and Burkholder--Davis--Gundy's inequality, we obtain
		\begin{align}\label{ineq.Quadratic.Doob.BDG}
			\E_{\rho}\left[\sup_{\substack{t\in[0,T], t-s\leq r }}\left\vert \M_t^N(f)-\M_s^N(f)\right\vert^4\right]&\leq C\E_{\rho}\left[\left\vert \M_{s+r}^N(f)-\M_s^N(f)\right\vert^4\right]
			\\&\leq C \E_{\rho}\left[\left(\left\la \M^N(f)\right\ra_{s+r}-\left\la \M^N(f)\right\ra_s\right)^2\right]\notag\\
			&\leq C_{f} r^2\notag,
		\end{align}
		where the last step follows from $L^2$-moment of $\mathcal B^N(f)$
		\begin{align}\label{eq.second moment.quadratic variation}
			\E_{\rho}\left[\left(\left\la \M^N(f)\right\ra_{s+r}-\left\la \M^N(f)\right\ra_s\right)^2\right]&=\E_{\rho}\left[\left(\int_s^{s+r} \mathcal B_\kappa^N(f)\,\d\kappa\right)^2\right]\\&\leq r\E_{\rho}\left[\int_s^{s+r} \mathcal B_\kappa^N(f)^2\d\kappa\right]\notag\\
			&=r\int_s^{s+r}\E_{\rho}\left[ \mathcal B_\kappa^N(f)^2\right]\,\d\kappa\notag\\
			&\leq C_{f}r^2\notag.
		\end{align}
		Combining  \eqref{ineq.Chebyshev}-\eqref{ineq.Quadratic.Doob.BDG}, we have
		\begin{align*}
			\P_{\rho}\left[\omega\left(\M^N(f),r\right)\geq \varepsilon\right]
			&\leq \frac{C}{\varepsilon^4}\sum_{s\in\{0,r,\cdots,\lfloor \frac{T}{r} \rfloor r\}}\E_{\rho}\left[\sup_{\substack{t\in[0,T], t-s\leq r }}\left\vert \M_t^N(f)-\M_s^N(f)\right\vert^4\right]\\
			&\leq \frac{C_{f}}{\varepsilon^4}\sum_{s\in\{0,r,\cdots,\lfloor \frac{T}{r} \rfloor  r\}}r^2\\
			&\leq \frac{C_{f}}{\varepsilon^4}Tr.
		\end{align*}
		This concludes the tightness for the sequence of martingale $\{\mathcal M_t^N(f), t\in [0,T]\}_{N\in \mathbb N}$ in $D([0, T], \mathbb{R})$. Along any convergent subsequence, the limit points are supported on $C\left([0,T],\R\right)$.

        As to the modulus of $\la \mathcal M^N(f)\ra_t$, we have
        \begin{align}\label{eq.quadratic variation.modulus}
            \P_{\rho}\left[\omega\left(\la\M^N(f)\ra,r\right)\geq \varepsilon\right]&\leq\frac{1}{\varepsilon^2}\E_{\rho}\left[\omega\left(\la\M^N(f)\ra,r\right)^2\right]\\
            &\leq \frac{1}{\varepsilon^2}\E_{\rho}\left[\sup_{t\in [0,T]}\left(\la\M^N(f)\ra_{(t+r)\wedge T}-\la\M^N(f)\ra_t\right)^2\right]\notag\\
            &\leq \frac{C}{\epsilon^2}\sum_{t\in\{0,r,\cdots,\lfloor \frac{T}{r} \rfloor r\}}\E_{\rho}\left[\left(\la\M^N(f)\ra_{(t+r)\wedge T}-\la\M^N(f)\ra_t\right)^2\right]\notag.
        \end{align}
        Here from the first step to the second step, we use that the quadratic variation is an increasing process. Combining \eqref{eq.second moment.quadratic variation} and \eqref{eq.quadratic variation.modulus}, we have
        \begin{align*}
			\P_{\rho}\left[\omega\left(\la\M^N(f)\ra,r\right)\geq \varepsilon\right]
			&\leq \frac{C}{\epsilon^2}\sum_{t\in\{0,r,\cdots,\lfloor \frac{T}{r} \rfloor r\}}\E_{\rho}\left[\left(\la\M^N(f)\ra_{(t+r)\wedge T}-\la\M^N(f)\ra_t\right)^2\right]\\
			&\leq \frac{C_{f}}{\varepsilon^2}\sum_{s\in\{0,r,\cdots,\lfloor \frac{T}{r} \rfloor  r\}}r^2\\
			&\leq \frac{C_{f}}{\varepsilon^2}Tr.
		\end{align*}
        This concludes the tightness for the sequence of quadratic variation $\{\la\mathcal M^N(f)\ra_t, t\in [0,T]\}_{N\in \mathbb N}$ in $D([0, T], \mathbb{R})$ and every limit point is supported on $C\left([0,T],\R\right)$.
 
        Since for every test function $f$, $\M_t^N(f)$ is uniformly bounded in $L^2$, we have uniform integrability for $t\in [0,T]$ and $N\in \mathbb N$. By \cite[Proposition~1.12, Chapter IX]{Limit_theorems_for_stochastic_processes}, this implies that along any subsequence $N_k$ such that the convergence in \eqref{eq.M.Nk.convergence} holds, the limit $\M_t(f)$ is a continuous martingale with respect to the natural filtration. Since we have the tightness of $\la \mathcal M^N(f)\ra_t$ for every fixed $t \in [0,T]$, \cite[Proposition~6.13, Chapter VI]{Limit_theorems_for_stochastic_processes} implies that $\mathcal M^N_t(f)$ is predictably uniformly tight. Then by \cite[Theorem~6.26, Chapter VI]{Limit_theorems_for_stochastic_processes}, we have the joint weak convergence along any convergent subsequence
        \begin{equation*}
         (\mathcal M^{N_k}_t,\la\mathcal M^{N_k}\ra_t)_{t\in [0,T]}  \xRightarrow{N_k \to \infty} (\M_t,\la\mathcal M\ra_t)_{t \in [0,T]}.
         \end{equation*}
        This completes the proof.
	\end{proof}
	The characterization of the limit quadratic variation will be the key ingredient in identifying the law of the limit martingale which will be left to Section~\ref{sec.M.limit}.
	\subsection{Moment estimate of $\mathcal{B}^N$}\label{sec.M.moment}
	In this section, we give the moment estimates of $\mathcal{B}^N$ and some related terms. Recall \eqref{eq.quadratic variation} that for every test function $f$, 
	\begin{align*}\label{eq.quadratic variation}
		\mathcal{B}^N(f)=N^{2}\sum_{i=1}^d\sum_{x}c_{x,x+e_i}\left(\eta^N\right)\left(\pi_{x,x+e_i}\mathcal{Q}^N(f)\right)^2\notag.
	\end{align*}
	We make a decomposition
	\begin{equation}\label{eq.pi.Q}
		\pi_{x,x+e_i}\mathcal{Q}^N(f)=\mathcal I_1^N(f,x,i)+\mathcal R_1^N(f,x,i).
	\end{equation}
	The main term is ${\mathcal  I}_1^N$:
	\begin{equation*}
		{\mathcal  I}_1^N(f,x,i)=2N^{-1-d}\sum_{\substack{y:y\notin \Lambda_L^{z(x)}\\y\neq x,x+e_i}}\bar\eta^N(y)
		\mathbf{v}_{x,i}\cdot{\nabla_{1}^{N}f\left(\frac{z(x)}{N},\frac{y}{N}\right)},
	\end{equation*}
	where the vector field $\mathbf{v}_{x,i}=\{\mathbf{v}_{x,i,j}\}_{1\leq j\leq d}$ is defined by
	\begin{align}\label{eq.v}
		\mathbf{v}_{x,i,j} &:=\pi_{x,x+e_i}\left(\ell_{e_j}+\phi_L^{z(x)}\cdot e_j\right) \\
		&=\left(\bar\eta^N(x)-\bar\eta^N(x+e_i)\right)e_i\cdot e_j+\pi_{x,x+e_i}\phi_L^{z(x)}\cdot e_j. \notag
	\end{align}
	The remainder $\mathcal R_1^N$ consists of three parts:
	\begin{equation*}
		\mathcal R_1^N(f,x,i)=\mathcal R_{1,1}^N(f,x,i)+\mathcal R_{1,2}^N(f,x,i)+\mathcal R_{1,3}^N(f,x,i). 
	\end{equation*}
	The three terms are
	\begin{align*}
		\mathcal R_{1,1}^N(f,x,i)&:=2N^{-1-d}\sum_{\substack{ z\in \mathcal Z_L }}
		\phi_L^{z}\cdot\pi_{x,x+e_i}\left(\sum_{y:y\notin \Lambda_L^z}\bar\eta^N(y){\nabla_{1}^{N}f\left(\frac{z}{N},\frac{y}{N}\right)}\right),\\
		\mathcal R_{1,2}^N(f,x,i)&:=2N^{-1-d}\sum_{\substack{y\in \Lambda_L^{z(x)}\\y\neq x,x+e_i}}\bar\eta^N(y)
		\left(\bar\eta^N(x)-\bar\eta^N(x+e_i)\right)\nabla_{1,i}^{N}f\left(\frac{x}{N},\frac{y}{N}\right),\\
		\mathcal R_{1,3}^N(f,x,i)&:=2N^{-1-d}\sum_{\substack{y:y\notin \Lambda_L^{z(x)}\\y\neq x+e_i}}\bar\eta^N(y)\left(\bar\eta^N(x)-\bar\eta^N(x+e_i)\right)\\
		&\hspace{5cm}\left\{\nabla_{1,i}^{N}f\left(\frac{x}{N},\frac{y}{N}\right)-{\nabla_{1,i}^{N}f\left(\frac{z(x)}{N},\frac{y}{N}\right)}\right\}\notag.
	\end{align*}
	The three terms have the following interpretations. The first remainder $\mathcal R_{1,1}^N$ is  the Kawasaki operator $\pi$ acting on the linear statistic part of the corrector. The second remainder $\mathcal R_{1,2}^N$ describes the terms that are near the diagonal, and the third remainder $\mathcal R_{1,3}^N$ is to fix the slope. 
	
	For the proof of Proposition~\ref{prop.B.bound.L1.L2}, we need some preparations. We calculate the second and fourth moments for $\mathcal  I_1^N$ and the three remainders $\mathcal R_{1,1}^N$, $\mathcal R_{1,2}^N$ and $\mathcal R_{1,3}^N$. Indeed, $\mathcal  I_1^N$ is the main contribution for the moments of $\mathcal B^N$, which will be stated as follows. The moments of the remainders are all small, which will be postponed to the appendix. 
	
	\begin{lemma}\label{lemma.I_1.expectation}
		For every test function $f$, we have the following moment estimates for $\mathcal  I_1^N$:
		\begin{equation*}
			N^2\sum_{i=1}^d\sum_x\E_\rho\left[{\mathcal  I}_1^N(f,x,i)^2 \right]\leq C_{f},
		\end{equation*}
		and
		\begin{equation*}
			N^4\sum_{i=1}^d\sum_x\E_\rho\left[{\mathcal  I}_1^N(f,x,i)^4\right]\leq C_{f}N^{-d}\Vert \phi_L\Vert_\infty^4.
		\end{equation*}
	\end{lemma}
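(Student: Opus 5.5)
The plan is to exploit the product structure of $\mathcal I_1^N(f,x,i)$. The vector $\mathbf v_{x,i}$ is $\F_{\Lambda_L^{z(x)}}$-measurable: indeed $\phi_L^{z(x)}$ is $\F_{(\Lambda_L^{z(x)})^-}$-measurable by Lemma~\ref{lem.Homogenization}~(\ref{lem.Correctorlocal}), and $x,x+e_i$ lie in $\Lambda_L^{z(x)}$ or at distance one from it. The sum over $y$ only involves sites outside $\Lambda_L^{z(x)}$, with $y\neq x+e_i$. Setting $S_{x,j}:=\sum_{y}\bar\eta^N(y)\,\nabla_{1,j}^N f(z(x)/N,y/N)$ over the admissible $y$, we have
\[
\mathcal I_1^N(f,x,i)=2N^{-1-d}\sum_{j=1}^d \mathbf v_{x,i,j}\,S_{x,j}.
\]
If $x+e_i$ exits the box, we simply move that one site into the ``box'' variables; this changes nothing below. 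Under the product measure $\P_\rho$, the vector $\mathbf v_{x,i}$ is then independent of $(S_{x,j})_j$, so every moment factorizes.

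\textbf{Second moment.} By independence and Cauchy--Schwarz over $j$,
\[
\E_\rho\bigl[\mathcal I_1^N(f,x,i)^2\bigr]\le CN^{-2-2d}\sum_j \E_\rho\bigl[\mathbf v_{x,i,j}^2\bigr]\,\E_\rho\bigl[S_{x,j}^2\bigr].
\]
The variables $\bar\eta(y)$ are independent and centered, so $\E_\rho[S_{x,j}^2]\le \chi(\rho)\,N^d\,\|\nabla f\|_\infty^2$. The prefactor then becomes $N^2\cdot N^{-2-2d}\cdot N^d=N^{-d}$. It remains to sum $\E_\rho[\mathbf v_{x,i,j}^2]$ over $x$ and $i$, and this is where the $L^1$ bound is won. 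Using $c_b\ge 1$ and grouping the sites $x$ by blocks,
\[
\sum_{x\in\Lambda_L^z}\sum_i\E_\rho\bigl[\mathbf v_{x,i,j}^2\bigr]\le C\,L^d+2\sum_{b\in\overline{(\Lambda_L^z)^*}}\E_\rho\bigl[c_b(\pi_b\phi_{L,e_j}^z)^2\bigr]\le CL^d.
\]
The last step uses Lemma~\ref{lem.Homogenization}~(\ref{lem.CorrectorH1}). Bonds of $\overline{(\Lambda_L^z)^*}$ leaving the box see $\phi^z_L$ only through one endpoint in $\partial\Lambda^z_L$, where $\phi^z_L$ does not depend; hence $\pi_b\phi^z_L=0$ on those bonds. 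Summing over the $(N/L)^d$ blocks gives $\sum_{x,i}\E_\rho[\mathbf v_{x,i,j}^2]\le CN^d$. Altogether the left-hand side of the first claim is at most $C_f N^{-d}\cdot N^d=C_f$.

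\textbf{Fourth moment.} Again by independence,
\[
\E_\rho\bigl[\mathcal I_1^4\bigr]\le CN^{-4-4d}\sum_j\E_\rho\bigl[\mathbf v_{x,i,j}^4\bigr]\,\E_\rho\bigl[S_{x,j}^4\bigr].
\]
For a sum of independent centered bounded variables, expanding the fourth power gives $\E_\rho[S_{x,j}^4]\le C\|\nabla f\|_\infty^4 N^{2d}$. For the other factor we use the crude bound $|\mathbf v_{x,i,j}|\le 2+2\|\phi_L\|_\infty\le C\|\phi_L\|_\infty$, since $\|\phi_L\|_\infty\ge 1$ for large $L$; otherwise one can keep an additive constant. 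Then
\[
N^4\sum_{i,x}\E_\rho\bigl[\mathcal I_1^4\bigr]\le C_f\, N^4\cdot dN^d\cdot N^{-4-4d}\cdot N^{2d}\,\|\phi_L\|_\infty^4=C_fN^{-d}\|\phi_L\|_\infty^4,
\]
which is the second claim.

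\textbf{Main obstacle.} The only delicate point is the second moment, where the crude $L^\infty$ bound on $\phi_L$ would be far too lossy. It must be replaced by the Dirichlet-energy bound Lemma~\ref{lem.Homogenization}~(\ref{lem.CorrectorH1}), and this requires the bond-by-block bookkeeping above: each bond $\{x,x+e_i\}$ is attached to the block $z(x)$, the overlap with $\overline{(\Lambda_L^z)^*}$ must be checked, and the exceptional site $x+e_i\notin\Lambda_L^{z(x)}$ needs care in the independence argument.
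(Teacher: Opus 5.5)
Your argument is correct and is essentially the paper's proof. Both use independence of $\mathbf v_{x,i}$ from the outside sum, Cauchy--Schwarz in $j$, a per-block bound $\sum_{i}\sum_{x\in\Lambda_L^z}\E_\rho[|\mathbf v_{x,i}|^2]\le CL^d$ for the second moment, and the crude $L^\infty$ bound with pairing of the $\bar\eta^N(y)$'s for the fourth. The only variation is that you get the per-block bound from the corrector Dirichlet estimate, Lemma~\ref{lem.Homogenization}~(\ref{lem.CorrectorH1}), rather than from the energy convergence of \cite[Proposition~6.1]{FGW24}; this is a weaker but sufficient input.

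One caveat: $\|\phi_L\|_\infty\ge 1$ need not hold. For constant rates $c_b\equiv 1$ the minimization gives $\phi_L\equiv 0$. So the fourth-moment bound should be read as $C_fN^{-d}(1+\|\phi_L\|_\infty)^4$, which is your ``additive constant'' fallback and also what the paper's own argument actually yields.
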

	\begin{proof}
		We make a direct calculation:
		\begin{align}\label{eq.I_1.expectation.square.1}
			&\quad N^2\sum_{i=1}^d\sum_x\E_\rho\left[{\mathcal  I}_1^N(f,x,i)^2 \right]\\
			&= 4N^{-2d}\sum_{i,j=1}^d\sum_{\substack{ z\in \mathcal Z_L }}\sum_{x\in \Lambda_L^z}\E_\rho\left[\left\{\sum_{\substack{y:y\notin \Lambda_L^{z}\\y\neq x,x+e_i}}\bar\eta^N(y)
			\mathbf{v}_{x,i,j}{\nabla_{1,j}^{N}f\left(\frac{z}{N},\frac{y}{N}\right)}\right\}^2\right]\notag\\
			&\leq 4d N^{-2d}\sum_{i,j=1}^d\sum_{\substack{ z\in \mathcal Z_L }}\sum_{x\in \Lambda_L^z}\E_{\rho}\left[\left(\mathbf{v}_{x,i,j}\right)^2\right]\left\{\sum_{\substack{y:y\notin \Lambda_L^z\\y\neq x+e_i}}\E_{\rho}\left[\bar\eta^N(y)^2\right]{\nabla_{1,j}^{N}f\left(\frac{z}{N},\frac{y}{N}\right)}^2\right\}\notag\\
			&\leq C_{f}N^{-d} \sum_{\substack{ z\in \mathcal Z_L }}\sum_{i=1}^d\sum_{x\in\Lambda_L^z}\E_{\rho}\left[\left|\mathbf{v}_{x,i}\right|^2\right]\notag,
		\end{align}
		where from the second line to the third line, we use Cauchy--Schwarz inequality and independence. Note that
		\begin{equation}\label{eq.I_1.expectation.square.2}
			\sum_{i=1}^d\sum_{x\in\Lambda_L^z}\E_{\rho}\left[\left|\mathbf{v}_{x,i}\right|^2\right]
			\leq \E_{\rho}\left[\sum_{i=1}^d\sum_{x\in\Lambda_L^z}c_{x,x+e_i}\left|\mathbf{v}_{x,i}\right|^2\right]
			\leq L^d\left(\sum_{j=1}^d\cc_{jj}(\rho)+CL^{-\gamma_1}\right)\leq CL^d,
		\end{equation}
		where the second step is shown in \cite[Proposition~6.1]{FGW24}. Combining \eqref{eq.I_1.expectation.square.1} and \eqref{eq.I_1.expectation.square.2}, we have
		\begin{align*}
			\quad N^2\sum_{i=1}^d\sum_x\E_\rho\left[{\mathcal  I}_1^N(f,x,i)^2 \right]
			\leq C_{f}N^{-d} \sum_{\substack{ z\in \mathcal Z_L }}\sum_{i=1}^d\sum_{x\in\Lambda_L^z}\E_{\rho}\left[\left|\mathbf{v}_{x,i}\right|^2\right]\leq C_{f}.
		\end{align*}
		For the fourth moment, we have
		\begin{align*}
			&\quad N^4\sum_{i=1}^d\sum_x\E_\rho\left[{\mathcal I}_1^N(f,x,i)^4\right]\\
			&\leq 16d^3 N^{-4d}\sum_{\substack{ z\in \mathcal Z_L }}\sum_{i,j=1}^d\sum_{x\in \Lambda_L^z}\E_{\rho}\left[\left(\mathbf{v}_{x,i,j}\right)^4\right]\E_{\rho}\left[\left\{\sum_{\substack{y:y\notin \Lambda_L^z\\y\neq x+e_i}}\bar\eta^N(y){\nabla_{1,j}^{N}f\left(\frac{z}{N},\frac{y}{N}\right)}\right\}^4
			\right]\notag\\
			&\leq C_{f}N^{-4d} \left(\frac{N}{L}\right)^d L^d \left(N^{2d}+N^d\right)\Vert \phi_L\Vert_\infty^4\notag\\
			&\leq C_{f}N^{-d}\Vert \phi_L\Vert_\infty^4,
		\end{align*}
		where from the first line to the second line, we use Cauchy--Schwarz inequality and from the second line to the third line, we use the observation that the expectation can be non-zero only when four $y$'s pair each other.
	\end{proof}   
	We put the moment estimates of the remainder $\mathcal R_1^N$ here, with proofs deferred to the Appendix.
	\begin{lemma}\label{lemma.R_1.expectation}
		For every test function $f$, we have the following moment estimates for the remainder term $\mathcal R_1^N$:
		\begin{align*}
			&N^2\sum_{i=1}^d\sum_{x}\E_{\rho}\left[\mathcal R_{1}^N(f,x,i)^2\right]
			\leq C_{f}\left(N^{-2}L^2+N^{-d}L^{d+2}\right),
		\end{align*}
		and
		\begin{multline*}
			\quad N^4\sum_{i=1}^d\sum_{x}\E_{\rho}\left[\mathcal R_{1}^N(f,x,i)^4\right]\\
			\leq C_{f}N^{-d}\left(N^{-4}L^4+N^{-2-d}L^{d+3}+N^{-2d}L^{2d+3}+\left(N^{-4-d}L^{-2d}+N^{-2d}L^{-1}\right)\Vert \phi_L\Vert_\infty^4\right)\notag.
		\end{multline*}
	\end{lemma}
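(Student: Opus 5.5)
We treat the three pieces $\mathcal R_{1,1}^N$, $\mathcal R_{1,2}^N$, $\mathcal R_{1,3}^N$ separately and combine them with $(a+b+c)^p\le 3^{p-1}(a^p+b^p+c^p)$ for $p=2,4$. Throughout we use three tools. First, the variables $\bar\eta^N(y)$ are independent and centered under $\Pr$, so a moment of a sum over $y$ vanishes unless the indices pair off. Second, by Lemma~\ref{lem.Homogenization}(\ref{lem.Correctorlocal}) the corrector $\phi_L^z$ is $\F_{(\Lambda_L^z)^-}$-measurable. Third, we use the bounds on $\phi_L$ from Lemma~\ref{lem.Homogenization}(\ref{lem.CorrectorLp}). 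In all three cases we first compute the $x$-dependence of the prefactor, then sum over the $dN^d$ bonds and multiply by $N^2$ or $N^4$.

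\emph{The term $\mathcal R_{1,1}^N$.} The operator $\pi_{x,x+e_i}$ acts on $\sum_{y\notin\Lambda_L^z}\bar\eta^N(y)\nabla_1^Nf(z/N,y/N)$ only when $x$ or $x+e_i$ lies outside $\Lambda_L^z$. In that case it produces
\begin{equation*}
\left(\bar\eta^N(x+e_i)-\bar\eta^N(x)\right)\nabla_1^Nf\left(\frac{z}{N},\frac{\cdot}{N}\right)
\end{equation*}
evaluated at $x$ or $x+e_i$, plus boundary corrections when exactly one endpoint lies in $\Lambda_L^z$. For the generic case where both endpoints are outside $\Lambda_L^z$, we use a discrete integration by parts in $y$. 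This replaces the difference by $N^{-1}$ times a second derivative, so that
\begin{equation*}
\mathcal R_{1,1}^N\approx 2N^{-2-d}\left(\bar\eta^N(x)-\bar\eta^N(x+e_i)\right)\sum_z\phi_L^z\cdot\partial^2 f .
\end{equation*}
Here the sum over $z$ has $(N/L)^d$ terms, which are independent across distinct blocks and centered. Hence the second moment is of order $N^{-4-2d}(N/L)^d\|\phi_L\|_{L^2}^2$. After the factor $N^2\cdot N^d$ this gives $N^{-2}L^{-d}L^{d+2}=N^{-2}L^2$. This matches the first term of the stated $L^2$ bound.

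The boundary bonds, where $x\in\partial\Lambda_L^z$ with $z=z(x)$ or a neighbouring block, do not gain the factor $N^{-1}$. However, there are only $(N/L)^dL^{d-1}$ such bonds. For each of them only $O(1)$ blocks $z$ contribute, so the term is bounded by $N^{-1-d}\|\phi_L\|$. Their total contribution is at most $N^2\cdot N^{d}L^{-1}\cdot N^{-2-2d}\|\phi_L\|_{L^2}^2\le N^{-d}L^{d+1}$, which is absorbed into $N^{-d}L^{d+2}$.

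The fourth moments follow the same pairing argument. We use $\|\phi_L\|_\infty$ wherever the block-sum pairing does not directly give an $L^2$ norm, and this produces the terms carrying $\|\phi_L\|_\infty^4$ in the statement.

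\emph{The terms $\mathcal R_{1,2}^N$ and $\mathcal R_{1,3}^N$.} For $\mathcal R_{1,2}^N$ the sum runs over the $L^d$ sites of $\Lambda_L^{z(x)}$, and the summands are independent of the factor $\bar\eta^N(x)-\bar\eta^N(x+e_i)$ since $y\neq x,x+e_i$. The second moment is therefore at most $C_fN^{-2-2d}L^d$, and after multiplying by $N^{2+d}$ this gives $N^{-d}L^d$. The fourth moment is $C_fN^{-4-4d}L^{2d}$ by pairing, and after the factor $N^{4+d}$ this gives $N^{-3d}L^{2d}$. Both are within the stated bounds.

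For $\mathcal R_{1,3}^N$ we use the slope error $|x-z(x)|\le L$, which gives
\begin{equation*}
\left|\nabla_{1,i}^Nf\left(\frac{x}{N},\frac{y}{N}\right)-\nabla_{1,i}^Nf\left(\frac{z(x)}{N},\frac{y}{N}\right)\right|\le C_fL/N.
\end{equation*}
With the sum over $N^d$ sites $y$, the second moment is $N^{-2-2d}\cdot N^d\cdot L^2N^{-2}$, and after the factor $N^{2+d}$ this gives $N^{-2}L^2$. The fourth moment similarly gives $N^{-4}L^4\cdot N^{-d}$.

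\emph{Main obstacle.} The main difficulty is the bookkeeping for $\mathcal R_{1,1}^N$. In the fourth moment, the correctors of the same block $z$ appear in several factors, together with the variables $\bar\eta^N$ that sit inside the blocks, and these are not independent of the correctors. To handle this, we split the indices according to whether the blocks coincide. For terms in which an $\bar\eta^N(y)$ lies inside the same block as the corrector it multiplies, we drop the independence and bound that corrector by $\|\phi_L\|_\infty$. This is where the terms $\left(N^{-4-d}L^{-2d}+N^{-2d}L^{-1}\right)\|\phi_L\|_\infty^4$ come from, and we expect the exact powers to require some care.
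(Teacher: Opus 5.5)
Your decomposition is the paper's. So are your estimates for $\mathcal R_{1,2}^N$, $\mathcal R_{1,3}^N$ and for the second moment of $\mathcal R_{1,1}^N$; for the last, the paper gets $C_f(N^{-2}L^2+N^{-d}L^{d+1})$ by the same generic/crossing-bond count. The gap is the fourth moment of $\mathcal R_{1,1}^N$, which is half of the statement. You assert it rather than derive it, and the mechanism in your ``main obstacle'' paragraph does not occur.

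In $\mathcal R_{1,1}^N$ the operator $\pi_{x,x+e_i}$ acts only on the linear statistic. With $g_z(y):=\nabla_1^Nf(z/N,y/N)$,
\begin{equation*}
\mathcal R_{1,1}^N(f,x,i)=2N^{-1-d}\left(\eta^N(x+e_i)-\eta^N(x)\right)\sum_{z\in\mathcal Z_L}\phi_L^z\cdot h_z(x,i),
\end{equation*}
where $h_z(x,i):=g_z(x)\mathbf{1}_{\{x\notin\Lambda_L^z\}}-g_z(x+e_i)\mathbf{1}_{\{x+e_i\notin\Lambda_L^z\}}$ is deterministic. No $\bar\eta^N(y)$ from inside a block ever appears. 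If $h_z(x,i)\neq 0$, at most one endpoint of the bond lies in $\Lambda_L^z$, and then it lies in $\partial\Lambda_L^z$. By Lemma~\ref{lem.Homogenization}(\ref{lem.Correctorlocal}), $\eta^N(x+e_i)-\eta^N(x)$ is therefore independent of every corrector that appears. Since the $\phi_L^z$ are also independent across blocks and centered,
\begin{multline*}
\Er\left[\mathcal R_{1,1}^N(f,x,i)^4\right]=16N^{-4-4d}\,\Er\left[\left(\eta^N(x+e_i)-\eta^N(x)\right)^4\right]\\
\times\Bigg(\sum_{z}\Er\left[(\phi_L^z\cdot h_z)^4\right]+3\sum_{z_1\neq z_2}\Er\left[(\phi_L^{z_1}\cdot h_{z_1})^2\right]\Er\left[(\phi_L^{z_2}\cdot h_{z_2})^2\right]\Bigg).
\end{multline*}
This is the paper's block-pairing computation. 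The $\norm{\phi_L}_\infty^4$ terms come from the diagonal $z_1=\dots=z_4$, where a single corrector appears to the fourth power. They do not come from dropping independence with bulk variables, as your last paragraph claims. Your earlier sentence (use $\norm{\phi_L}_\infty$ where block pairing gives no $L^2$ norm) was pointing the right way.

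To finish the count, use the size of $h_z$ for each $z$:
\begin{itemize}
\item $|h_z|\le C_f/N$ on the $O(N^d)$ bonds with both endpoints outside $\Lambda_L^z$; this is a difference quotient, not a summation by parts;
\item $|h_z|\le C_f$ on the $O(L^{d-1})$ bonds crossing $\partial\Lambda_L^z$;
\item $h_z=0$ otherwise.
\end{itemize}
Multiply by $N^4$ and sum over $(i,x)$. The off-diagonal part, with $\norm{\phi_L}_{L^2}^2\le CL^{d+2}$, gives the three polynomial terms $C_fN^{-d}(N^{-4}L^4+N^{-2-d}L^{d+3}+N^{-2d}L^{2d+3})$. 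The diagonal part gives
\[
C_fN^{-4d}(N/L)^d\left(N^{d-4}+L^{d-1}\right)\norm{\phi_L}_\infty^4=C_fN^{-d}\left(N^{-4-d}L^{-d}+N^{-2d}L^{-1}\right)\norm{\phi_L}_\infty^4 .
\]

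The ``care'' you anticipated concerns the statement itself: one gets $L^{-d}$, not the printed $L^{-2d}$. The paper's proof reaches the same expression $(N/L)^dN^{d-4}\norm{\phi_L}_\infty^4$ and then writes $N^{2d-4}L^{-2d}$, which appears to be an arithmetic slip. The available corrector bounds do not give $\Er[(\phi_L^z)^4]\le CL^{-d}\norm{\phi_L}_\infty^4$, so aim for $L^{-d}$. This is harmless where the lemma is used, because $\norm{\phi_L}_\infty\le CL^{d+2}\log L$ and $L^{100}\ll N$.
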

	We end this section with the proof of Proposition~\ref{prop.B.bound.L1.L2}:
	\begin{proof}[Proof of Proposition~\ref{prop.B.bound.L1.L2}]
		Recall that by \eqref{eq.pi.Q}, we have
		\begin{align*}
			\E_\rho\left[\mathcal{B}^N(f)\right]&=N^{2}\E_\rho\left[\sum_{i=1}^d\sum_{x}c_{x,x+e_i}\left(\eta^N\right)\left(\pi_{x,x+e_i}\mathcal{Q}^N(f)\right)^2\right]\\
			&\leq \lambda N^{2}\sum_{i=1}^d\sum_{x} \E_\rho\left[\left(\pi_{x,x+e_i}\mathcal{Q}^N(f)\right)^2\right]\\
            &= \lambda N^{2}\sum_{i=1}^d\sum_{x} \E_\rho\left[\left(\mathcal I_1^N(f,x,i)+\mathcal R_1^N(f,x,i)\right)^2\right].
		\end{align*}
		Combining Cauchy--Schwarz inequality, Lemma~\ref{lemma.I_1.expectation} and Lemma~\ref{lemma.R_1.expectation}, we conclude that $\mathcal{B}^N(f)$ is bounded in $L^1$.
		
		As for $L^2$-norm of $\mathcal B^N(f)$, we calculate
		\begin{align*}
			\E_\rho\left[\left|\mathcal{B}^N(f)\right|^2\right]&=N^{4}\E_\rho\left[\left(\sum_{i=1}^d\sum_{x}c_{x,x+e_i}\left(\eta^N\right)\left(\pi_{x,x+e_i}\mathcal{Q}^N(f)\right)^2\right)^2\right]\\
			&\leq d N^{4} \sum_{i=1}^d \E_\rho\left[\left(\sum_{x}c_{x,x+e_i}\left(\eta^N\right)\left(\pi_{x,x+e_i}\mathcal{Q}^N(f)\right)^2\right)^2\right]\notag\\
			&\leq \lambda^2d N^{4} \sum_{i=1}^d \sum_{x_1,x_2}\E_\rho\left[\left(\pi_{x_1,x_1+e_i}\mathcal{Q}^N(f)\right)^2\left(\pi_{x_2,x_2+e_i}\mathcal{Q}^N(f)\right)^2\right]\notag.
		\end{align*}
		Here the first line to the second line follows from Cauchy--Schwarz inequality and the second line to the third line follows from the uniform upper bound of $c$. For the simplicity of notations, we omit the parameter $f$ in the following calculation. We calculate
		\begin{align*}
			&\quad\E_\rho\left[\left(\pi_{x_1,x_1+e_i}\mathcal{Q}^N\right)^2\left(\pi_{x_2,x_2+e_i}\mathcal{Q}^N\right)^2\right]\\
			&=\E_\rho\bigg[\left({\mathcal I}_1^N(x_1,i)+\mathcal R_{1}^N(x_1,i)\right)^2\left({\mathcal I}_1^N(x_2,i)+\mathcal R_{1}^N(x_2,i)\right)^2\bigg]\notag\\
			&\leq 4\E_\rho\bigg[\left({\mathcal I}_1^N(x_1,i)^2+\mathcal R_1^N(x_1,i)^2\right)\left({\mathcal I}_1^N(x_2,i)^2+\mathcal R_1^N(x_2,i)^2\right)\bigg]\notag.
		\end{align*}
		There are $4$ terms to be estimated. Except for the term
		\begin{equation*}
			\E_\rho\left[{\mathcal I}_1^N(x_1,i)^2{\mathcal I}_1^N(x_2,i)^2\right],
		\end{equation*}
		we estimate the other $3$ terms in the same way. Take 
		\begin{equation*}
			\E_\rho\left[{\mathcal I}_1^N(x_1,i)^2{\mathcal R}_1^N(x_2,i)^2\right],
		\end{equation*}
		for an example. By Cauchy--Schwarz inequality, we have
		\begin{align}
			&\quad N^{4} \sum_{i=1}^d\sum_{x_1,x_2}\E_\rho\left[{\mathcal I}_1^N(x_1,i)^2{\mathcal R}_1^N(x_2,i)^2\right]\notag\\
			&\leq N^{4} \sum_{i=1}^d\sum_{x_1,x_2}\E_\rho\left[{\mathcal I}_1^N(x_1,i)^4\right]^{\frac{1}{2}}\E_\rho\left[{\mathcal R}_1^N(x_2,i)^4\right]^{\frac{1}{2}}\notag\\
			&\leq \left(N^4\sum_{i=1}^d\sum_{x_1,x_2}\E_\rho\left[{\mathcal I}_1^N(x_1,i)^4\right]\right)^{\frac{1}{2}}\left(N^4\sum_{i=1}^d\sum_{x_1,x_2}\E_\rho\left[{\mathcal R}_1^N(x_2,i)^4\right]\right)^{\frac{1}{2}}\notag\\
			&\leq \left(N^4N^d\sum_{i=1}^d\sum_{x}\E_\rho\left[{\mathcal I}_1^N(x,i)^4\right]\right)^{\frac{1}{2}}\left(N^4N^d\sum_{i=1}^d\sum_{x}\E_\rho\left[{\mathcal R}_1^N(x,i)^4\right]\right)^{\frac{1}{2}}\notag.
		\end{align}
		By Lemma~\ref{lemma.I_1.expectation} and Lemma~\ref{lemma.R_1.expectation}, we  have bounds for the two multiplicative terms:
        \begin{multline*}
            N^{4+d}\sum_{i=1}^d\sum_{x}\E_\rho\left[{\mathcal I}_1^N(x,i)^4\right]\\
            \leq C_{f}\left(N^{-4}L^4+N^{-2-d}L^{d+3}+N^{-2d}L^{2d+3}+\left(N^{-4-d}L^{-2d}+N^{-2d}L^{-1}\right)\Vert \phi_L\Vert_\infty^4\right),
        \end{multline*}
        and 
        \begin{equation*}
           N^{4+d}\sum_{i=1}^d\sum_{x}\E_\rho\left[{\mathcal R}_1^N(x,i)^4\right]\leq C_f\Vert \phi_L\Vert_\infty^4.
        \end{equation*}
        Therefore, with Hypothesis~\ref{hyp} and the corrector estimate in \eqref{lem.CorrectorLp} of Lemma~\ref{lem.Homogenization}, we know that 
        \[
        N^{4} \sum_{i=1}^d\sum_{x_1,x_2}\E_\rho\left[{\mathcal I}_1^N(x_1,i)^2{\mathcal R}_1^N(x_2,i)^2\right]=o(N).
        \]
        And we have the same estimate for the other two terms 
        \[
        N^{4} \sum_{i=1}^d\sum_{x_1,x_2}\E_\rho\left[{\mathcal R}_1^N(x_1,i)^2{\mathcal I}_1^N(x_2,i)^2\right],\quad N^{4} \sum_{i=1}^d\sum_{x_1,x_2}\E_\rho\left[{\mathcal R}_1^N(x_1,i)^2{\mathcal R}_1^N(x_2,i)^2\right].
        \]
		Now we need to estimate the main contribution: 
		\begin{equation*}
			\quad N^{4} \sum_{i=1}^d \sum_{x_1,x_2}\E_\rho\left[{\mathcal I}_1^N(x_1,i)^2{\mathcal I}_1^N(x_2,i)^2\right].
		\end{equation*}
		By Cauchy-Schwarz inequality, this term is bounded by 
        \begin{equation}\label{eq.I_1.I_1.expectation.square}
			C_{f}N^{-4d}\sum_{\mathcal E}
			\E_\rho\left[\bar\eta^N(y_1)\bar\eta^N(y_1')\bar\eta^N(y_2)\bar\eta^N(y_2')\left|\mathbf{v}_{x_1,i}\right|^2\left|\mathbf{v}_{x_2,i}\right|^2\right],
		\end{equation}
		where $\mathcal E$ denotes collection of admissible tuples, consisting of all variables satisfying
        \begin{align}\label{eq.defE}
            \mathcal{E} := \left\{
		      \begin{aligned}
			         &z_1, z_2 \in \mathcal Z_L,\ 1 \leq i \leq d,\\
			         &x_1 \in \Lambda_L^{z_1},\ x_2 \in \Lambda_L^{z_2},\\
			         &y_1, y_1' \in \Td_N \setminus \left( \Lambda_L^{z_1} \cup \{x_1 + e_i\} \right),\\
			         &y_2, y_2' \in \Td_N \setminus \left( \Lambda_L^{z_2} \cup \{x_2 + e_i\} \right)
		      \end{aligned}
		      \right\}.
        \end{align}
        
		To calculate the expectation in \eqref{eq.I_1.I_1.expectation.square}, we make use of a Wick diagram. For each pair of the terms $\bar\eta^N(y)$'s and $\left|\mathbf{v}_{x,i}\right|^2$'s, if their supports have non-empty intersections, we draw a bond between them. Otherwise, there is no bond between the two terms. We notice that the expectation in \eqref{eq.I_1.I_1.expectation.square} vanishes once some $\bar\eta^N(y)$ is not paired. Therefore, we only focus on the non-vanishing tuples, denoted by $\mathcal{E}_*$. 
        \begin{align*}
            \mathcal{E}_* := \{(i, z_1, z_2, x_1, x_2, y_1, y_1', y_2, y_2') \in \mathcal{E}: &\\
            &\text{ all the } \bar\eta^N(y) \text{ are paired in the Wick diagram}\}.
        \end{align*}
        We then study the contribution case by case.		
		
		\textbf{Case 1:} $\left(\Lambda_L^{z_{1}}\cup \{x_1+e_i\}\right)\cap\left(\Lambda_L^{z_{2}}\cup \{x_2+e_i\}\right)\neq \emptyset$.

		In this case, there will be bonds between $\left|\mathbf{v}_{x_1,i}\right|^2$ and $\left|\mathbf{v}_{x_2,i}\right|^2$. There are two possible situations. 
		
		\textbf{Case 1.1:} $z_1=z_2$.
		
		In this case, the expectation is not zero if and only if two of $y_1,y_1',y_2,y_2'$ share the same value and the other two share the same value at the same time. The contribution in $\mathcal E_*$ is 
		\begin{equation*}
			\mathcal E_{1,1}=\mathcal E_*\cap\{z_1=z_2\}
		\end{equation*}
		\begin{figure}[htbp]
			\centering
			\includegraphics[width=0.6 \textwidth]{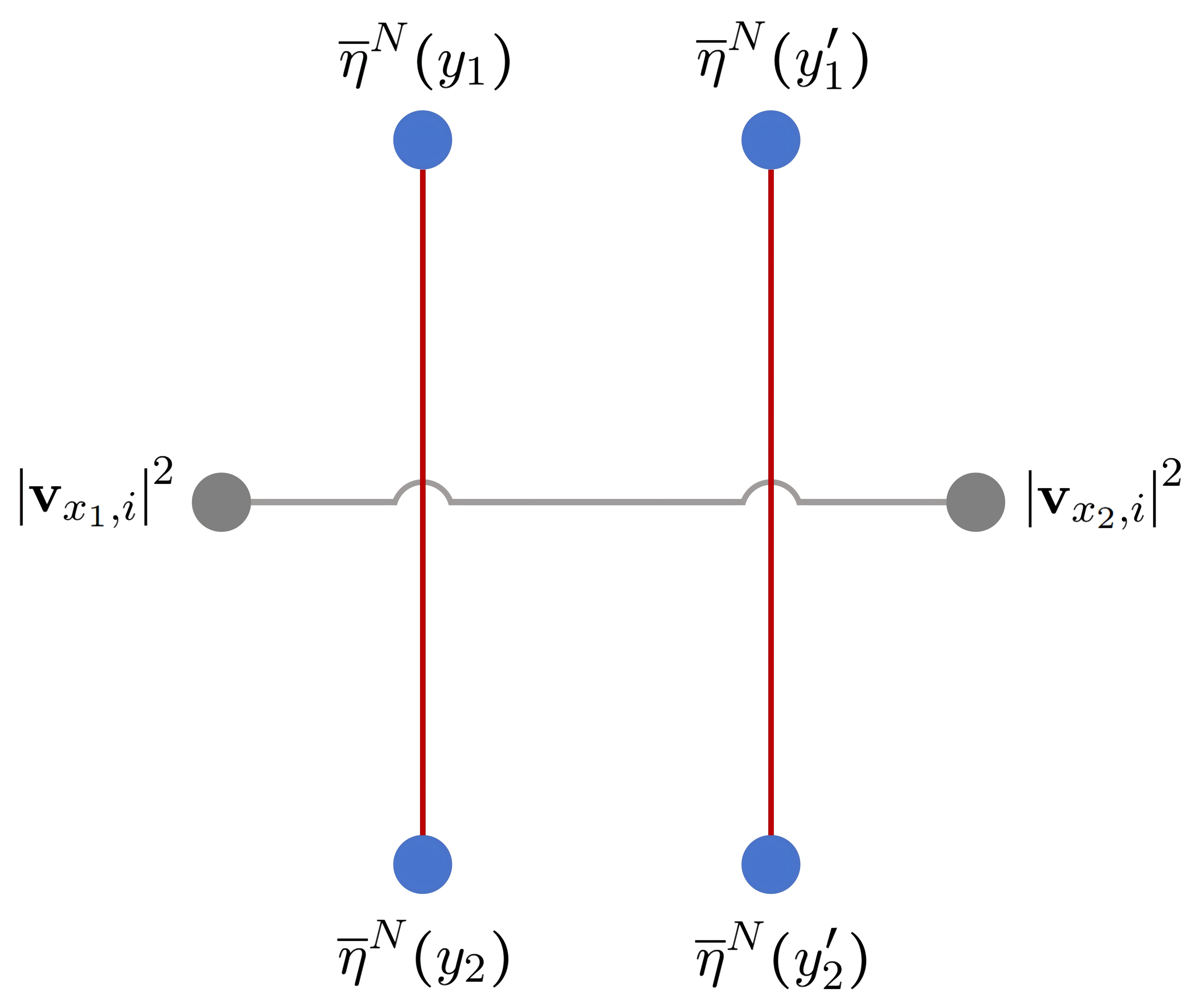}
			\caption*{Case 1.1}
		\end{figure}

		Therefore, the contribution in \eqref{eq.I_1.I_1.expectation.square} is bounded by
		\begin{align}\label{eq.I_1.I_1.expectation.square.Case1.1}
			&\quad N^{-4d}\sum_{\mathcal E_{1.1}}
			\E_\rho\left[\bar\eta^N(y_1)\bar\eta^N(y_1')\bar\eta^N(y_2)\bar\eta^N(y_2')\left|\mathbf{v}_{x_1,i}\right|^2\left|\mathbf{v}_{x_2,i}\right|^2\right]\\
			&\leq N^{-4d}\sum_{\substack{z\in \mathcal Z_L }}\sum_{i=1}^d\sum_{x_1,x_2\in \Lambda_L^{z}} \E_\rho\left[\left|\mathbf{v}_{x_1,i}\right|^2\left|\mathbf{v}_{x_2,i}\right|^2 \1_{\mathcal E_{1,1}}\right] \notag \\
            & \qquad \cdot \sum_{\substack{y_1,y_1',y_2,y_2'}}\E_\rho\left[\bar\eta^N(y_1)\bar\eta^N(y_1')\bar\eta^N(y_2)\bar\eta^N(y_2')\1_{\mathcal E_{1,1}}\right]\notag\\
			&\leq CN^{-4d} \left(\frac{N}{L}\right)^d \left(L^d\right)^2 \Vert \phi_L\Vert_\infty^4\left(N^{2d}+N^{d}\right)\notag\\
			&\leq CN^{-d}L^d\Vert \phi_L\Vert_\infty^4\notag.
		\end{align}
		\textbf{Case 1.2:} $z_1\neq z_2$ and the intersection occurs through a boundary bond.
		
		In this case, $\Lambda_L^{z_1}\cap\Lambda_L^{z_2}=\emptyset$. Since there is an intersection between $\mathbf{v}_{x_1,i}$ and $\mathbf{v}_{x_2,i}$, $x_1+e_i\in \Lambda_L^{z_2}$ or $x_2+e_i\in \Lambda_L^{z_1}$. We take the first case as an example. This implies $z_2=z_1+Le_i$. We have the observation that when $x_1+e_i\in \Lambda_L^{z_2}$ with $z_1\neq z_2$, we have
		\begin{equation*}
			\left|\mathbf{v}_{x
				_1,i}\right|^2=\left(\bar\eta^N(x_1)-\bar\eta^N(x_1+e_i)\right)^2.
		\end{equation*}
		The contribution in $\mathcal E_*$ is 
		\begin{equation*}
			\mathcal E_{1,2}=\mathcal E_*\cap\left\{z_1\neq z_2, \ x_1+e_i\in \Lambda_L^{z_2} \ \mathrm{or}\  x_2+e_i\in \Lambda_L^{z_1}\right\}.
		\end{equation*}
		To make the expectation non-zero, there are two possibilities:
		\begin{itemize}
			\item $y_1,y_1',y_2,y_2'$ pair with each other,
			\item $y_2=y_2'$ and $y_1,y_1'\in \Lambda_L^{z_2}\cup \{x_2+e_i\}$.
		\end{itemize}
        \begin{figure}[htbp]
			\centering
			\includegraphics[width=0.6\textwidth]{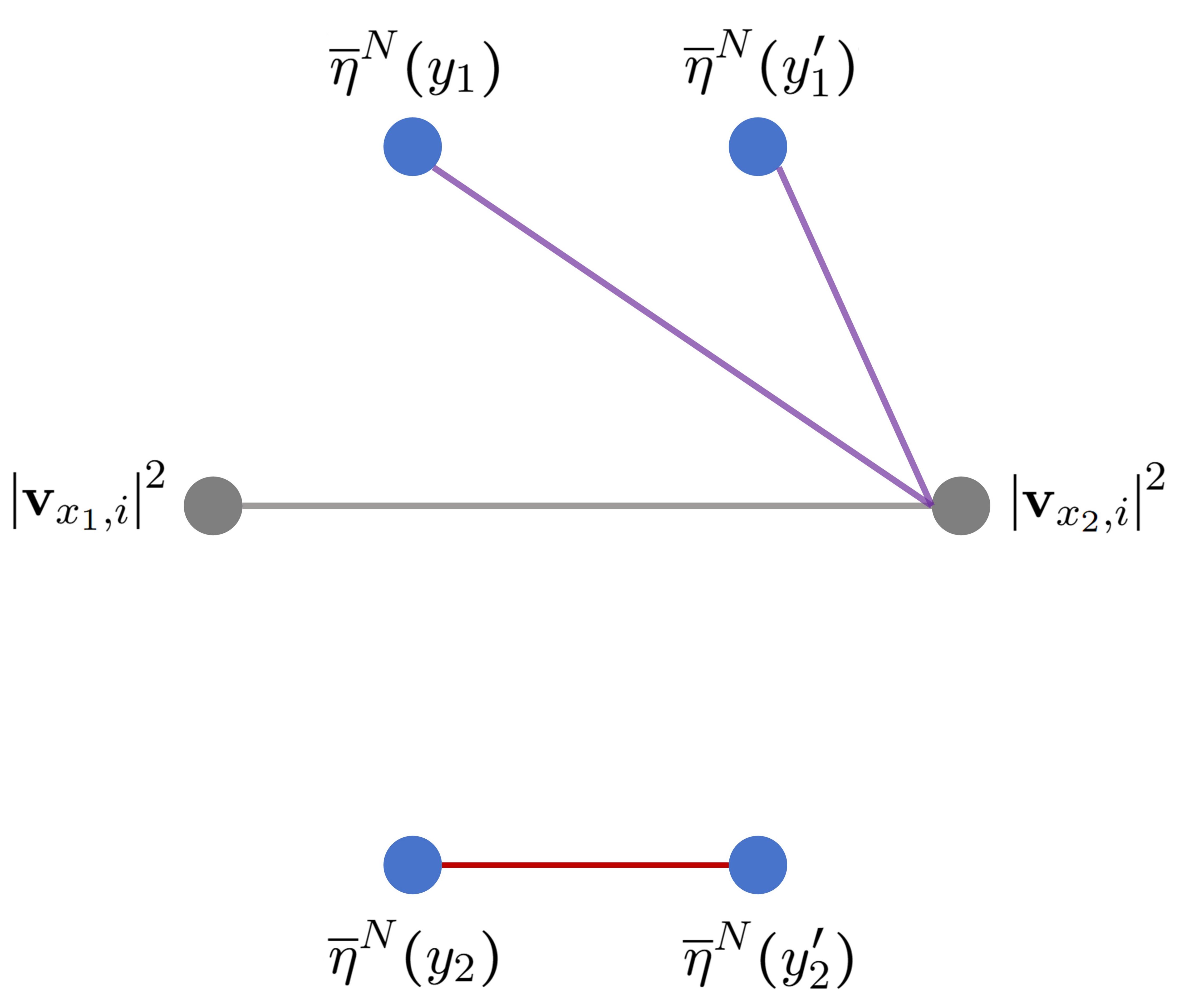}
			\caption*{Case 1.2}
		\end{figure}
		Therefore, the contribution in \eqref{eq.I_1.I_1.expectation.square} is
		\begin{align}\label{eq.I_1.I_1.expectation.square.Case1.2}
			&\quad 2N^{-4d}\sum_{\mathcal E_{1.2}}
			\E_\rho\left[\bar\eta^N(y_1)\bar\eta^N(y_1')\bar\eta^N(y_2)\bar\eta^N(y_2')\left|\mathbf{v}_{x_1,i}\right|^2\left|\mathbf{v}_{x_2,i}\right|^2\right]\\
			&\leq 2N^{-4d}\sum_{\substack{z_1\in \mathcal Z_L}}\sum_{\substack{x_1\in\Lambda_L^z}}\1_{\left\{x_1+e_i\in \Lambda_L^{z+Le_i}\right\}}\sum_{x_2\in \Lambda_L^{z+Le_i}}\sum_{\substack{y_1,y_1':y_1,y_1'\notin \Lambda_L^{z_1}\\y_1,y_1'\neq x_1+e_i}}\sum_{\substack{y_2,y_2':y_2,y_2'\notin \Lambda_L^{z_1+Le_i}\\y_2,y_2'\notin x_2+e_i}}\notag\\
			&\hspace{2cm}\E_\rho\left[\bar\eta^N(y_1)\bar\eta^N(y_1')\bar\eta^N(y_2)\bar\eta^N(y_2')\left|\mathbf{v}_{x_1,i}\right|^2\left|\mathbf{v}_{x_2,i}\right|^2\right]\notag\\
			&\leq CN^{-4d}\sum_{\substack{z_1\in \mathcal Z_L}}\sum_{\substack{x_1\in\Lambda_L^z\\x_1+e_i\in \Lambda_L^{z+Le_i}}}\sum_{x_2\in \Lambda_L^{z+Le_i}}\Vert \phi_L\Vert_\infty^2\left(N^{2d}+N^dL^{2d}\right)\notag\\
			&\leq CN^{-d}L^{d-1}\Vert \phi_L\Vert_\infty^2\left(1+N^{-d}L^{2d}\right)\notag.
		\end{align}
		\textbf{Case 2:} $\left(\Lambda_L^{z_1}\cup \{x_1+e_i\}\right)\cap\left(\Lambda_L^{z_{2}}\cup \{x_2+e_i\}\right)= \emptyset$.
		
		In this case, there will be no bond between $\left|\mathbf{v}_{x_1,i}\right|^2$ and $\left|\mathbf{v}_{x_2,i}\right|^2$. We further classify the cases according to the number of bonds between $\bar\eta^N(y)$'s  and $\left|\mathbf{v}_{x,i}\right|^2$'s, which is denoted by $k$. 
        \begin{itemize}
            \item $k = 0$: In this case, the expectation is not zero if two among $y_1,y_1',y_2,y_2'$ are paired, and the other two are also paired at the same time. The contribution in $\mathcal E_*$ is 
		\begin{multline*}
			\mathcal E_{2.0}=\mathcal E_*\cap\left\{z_1\neq z_2, \ y_1,y_1' \notin \Lambda_L^{z_2} \cup \{x_2 + e_i\}, \ y_2,y_2' \notin \Lambda_L^{z_1} \cup \{x_1 + e_i\} ,\right.\\\left.y_1=y_1'=y_2=y_2' \ \mathrm{or} \ y_1, y_1',y_2,y_2' \ \mathrm{pair \ each \ other \ in }\  2 \ \mathrm{groups}  \right\}.
		\end{multline*}
		\begin{figure}[htbp]
			\centering
			\includegraphics[width=0.6\textwidth]{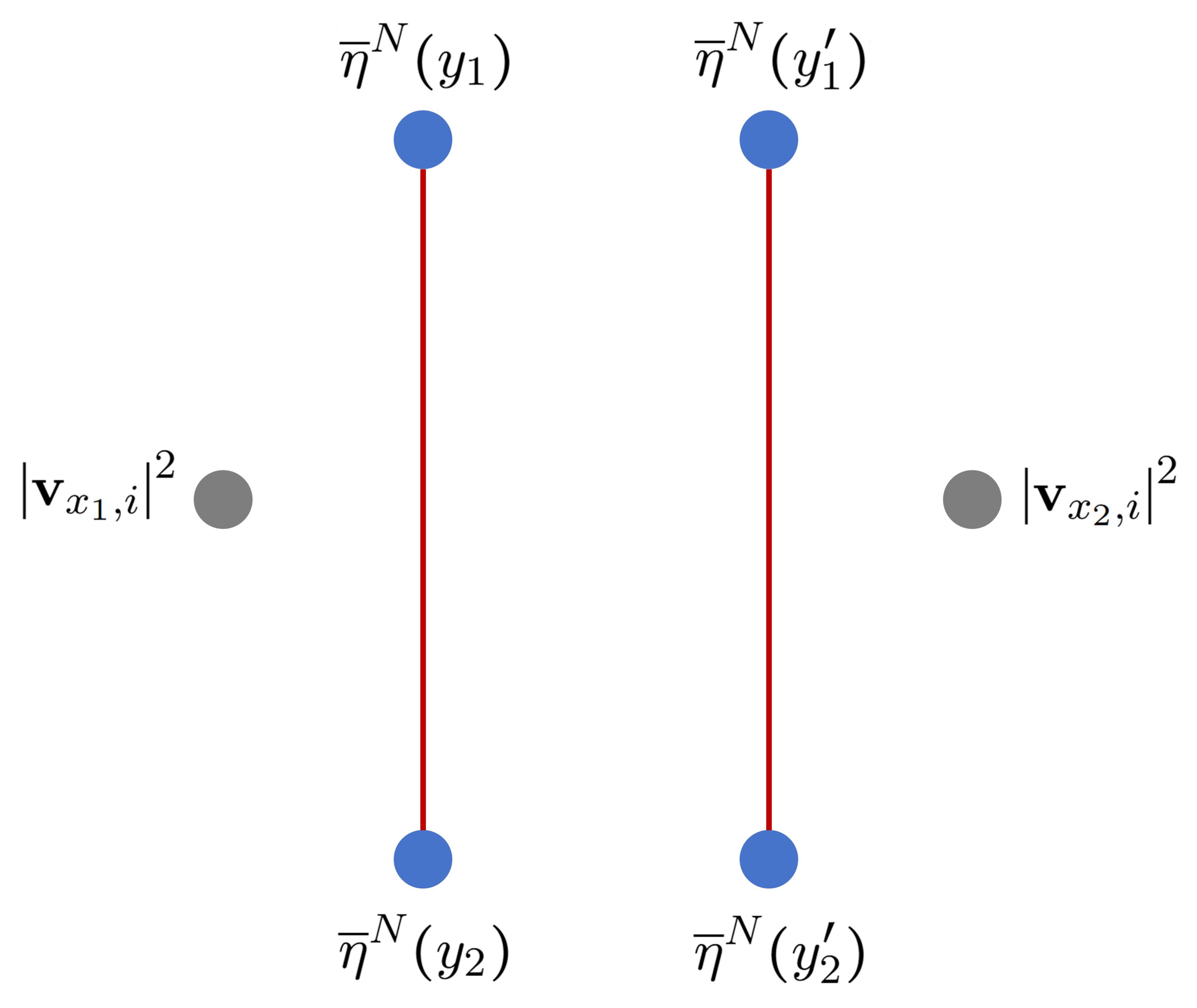}
			\caption*{Case 2: $k=0$}
		\end{figure}

		Therefore, the contribution in \eqref{eq.I_1.I_1.expectation.square} is
		\begin{align}\label{eq.I_1.I_1.expectation.square.Case2.0}
			&\quad N^{-4d} \sum_{\mathcal E_{2.0}} \E_\rho\left[\bar\eta^N(y_1)\bar\eta^N(y_1')\bar\eta^N(y_2)\bar\eta^N(y_2')\left|\mathbf{v}_{x_1,i}\right|^2\left|\mathbf{v}_{x_2,i}\right|^2\right]\\
			&\leq N^{-4d}\sum_{\substack{z_1,z_2\in \mathcal Z_L }}\sum_{i=1}^d\sum_{x_1\in \Lambda_L^{z_1}}\sum_{x_2\in \Lambda_L^{z_2}}\E_\rho\left[\left|\mathbf{v}_{x_1,i}\right|^2\right]\E_\rho\left[\left|\mathbf{v}_{x_2,i}\right|^2\right]\notag\\
			&\hspace{3cm}\left\{\sum_{y_1,y_1',y_2,y_2'}\E_\rho\left[\bar\eta^N(y_1)\bar\eta^N(y_1')\bar\eta^N(y_2)\bar\eta^N(y_2')\right]\right\}\notag\\
			&\leq CN^{-4d}\sum_{\substack{z_1,z_2\in \mathcal Z_L }}\sum_{i=1}^d\left\{\sum_{x_1\in \Lambda_L^{z_1}}\E_\rho\left[\left|\mathbf{v}_{x_1,i}\right|^2\right]\right\}\left\{\sum_{x_2\in \Lambda_L^{z_2}}\E_\rho\left[\left|\mathbf{v}_{x_2,i}\right|^2\right]\right\}\left(N^{2d}+N^{d}\right)\notag\\
			&\leq CN^{-2d} \left(\frac{N}{L}\right)^{2d}L^{2d}\notag,
		\end{align}
		where the last step follows from \eqref{eq.I_1.expectation.square.2}.
            \item $k = 1$: In this case, if the expectation is not zero, one $\bar\eta^N(y)$ connects to $\left|\mathbf{v}_{x,i}\right|^2$, and the other $3$ terms of type $\bar\eta^N(y)$ have to share the same value. The contribution in $\mathcal E_*$ is 
        \begin{multline*}
            \mathcal{E}_{2.1} = \mathcal{E}_* \cap \left\{ z_1 \neq z_2,\; \mathbf{1}_{\{y_1 \in \Lambda_L^{z_2} \cup \{x_2+e_i\}\}} + \mathbf{1}_{\{y_1' \in \Lambda_L^{z_2} \cup \{x_2+e_i\}\}} \right.\\\left.+ \mathbf{1}_{\{y_2 \in \Lambda_L^{z_1} \cup \{x_1+e_i\}\}} + 
\mathbf{1}_{\{y_2' \in \Lambda_L^{z_1} \cup \{x_1+e_i\}\}} = 1 \right\}.
        \end{multline*}
		\begin{figure}[htbp]
			\centering
			\includegraphics[width=0.6\textwidth]{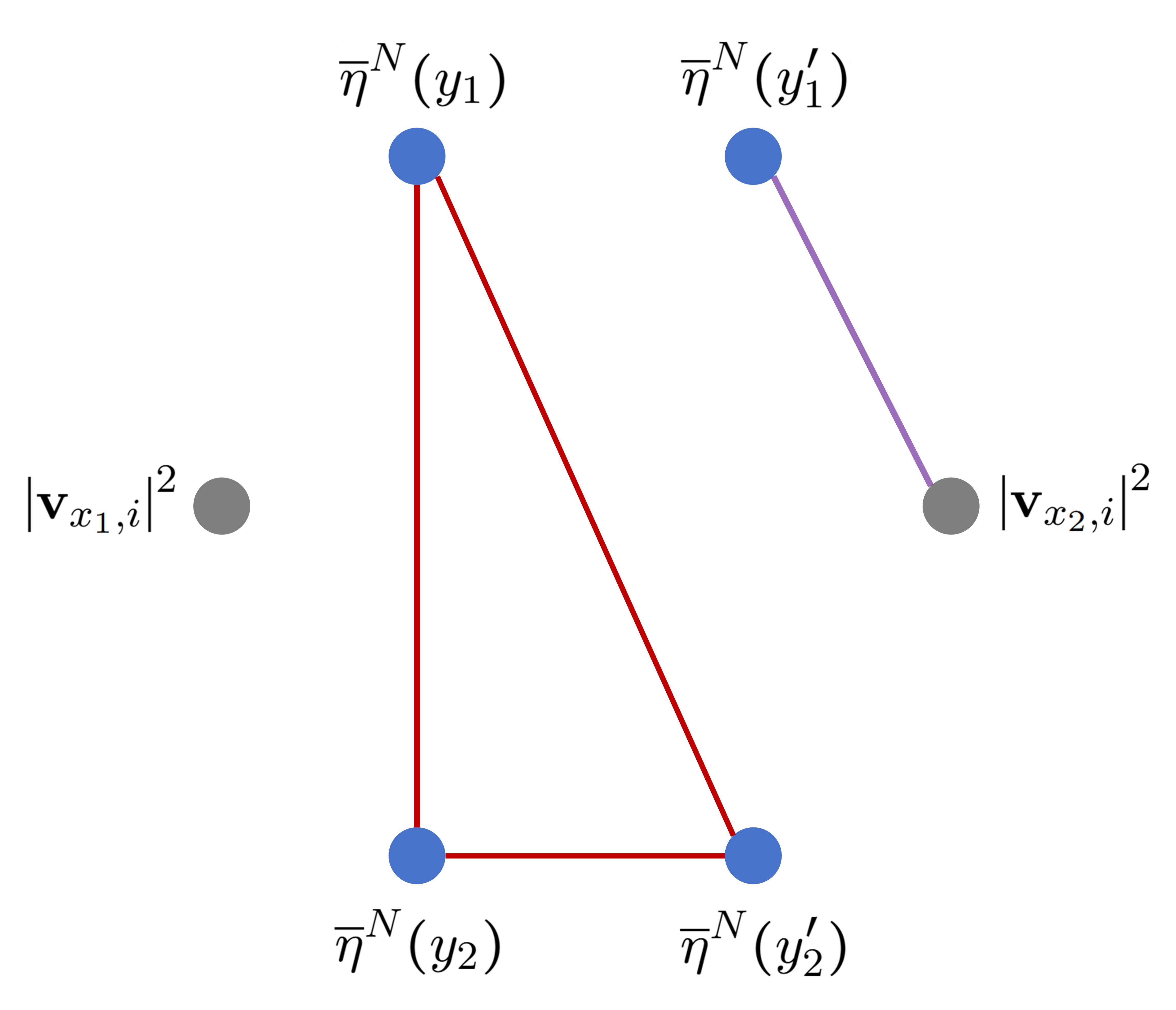}
			\caption*{Case 2: $k=1$.}
		\end{figure}
		
		Therefore, the contribution in \eqref{eq.I_1.I_1.expectation.square} is \begin{align}\label{eq.I_1.I_1.expectation.square.Case2.1}
			&\quad N^{-4d}\sum_{\mathcal E_{2.1}}\E_\rho\left[\bar\eta^N(y_1)\bar\eta^N(y_1')\bar\eta^N(y_2)\bar\eta^N(y_2')\left|\mathbf{v}_{x_1,i}\right|^2\left|\mathbf{v}_{x_2,i}\right|^2\right]\\
			&\leq 4N^{-4d}\sum_{\substack{z_1,z_2\in \mathcal Z_L }}\sum_{i=1}^d\sum_{x_1\in \Lambda_L^{z_1}}\sum_{x_2\in\Lambda_L^{z_2}}\sum_{y_1'\in\Lambda_L^{z_2}\cup \{x_2+e_i\}}
			\sum_{\substack{y_1,y_2,y_2'}}\notag\\
			&\hspace{2cm}\E_\rho\left[\bar\eta^N(y_1')\left|\mathbf{v}_{x_2,i}\right|^2\right]\E_\rho\left[\bar\eta^N(y_1)\bar\eta^N(y_2)\bar\eta^N(y_2')\right]\E_\rho\left[\left|\mathbf{v}_{x_1,i}\right|^2\right]\notag\\
			&\leq CN^{-4d}\left(\frac{N}{L}\right)^{2d}L^{2d}L^dN^d\Vert \phi_L\Vert_\infty^4\notag\\
			&\leq CN^{-d}L^{d}\Vert \phi_L\Vert_\infty^4\notag.
		\end{align}

            \item $k = 2$: For this case, if the expectation is not zero, the remaining two $\bar\eta^N(y)$'s disconnected to $\left|\mathbf{v}_{x,i}\right|^2$'s must pair with each other. The contribution in $\mathcal E_*$ is 
        \begin{multline*}
            \mathcal{E}_{2.2} = \mathcal{E}_* \cap \Bigl\{ z_1 \neq z_2,\; \mathbf{1}_{\{y_1 \in \Lambda_L^{z_2} \cup \{x_2+e_i\}\}} + \mathbf{1}_{\{y_1' \in \Lambda_L^{z_2} \cup \{x_2+e_i\}\}} \\+ \mathbf{1}_{\{y_2 \in \Lambda_L^{z_1} \cup \{x_1+e_i\}\}} + 
\mathbf{1}_{\{y_2' \in \Lambda_L^{z_1} \cup \{x_1+e_i\}\}} = 2 \Bigr\}.
        \end{multline*}
		\begin{figure}[htbp]
			\centering
			\includegraphics[width=0.6\textwidth]{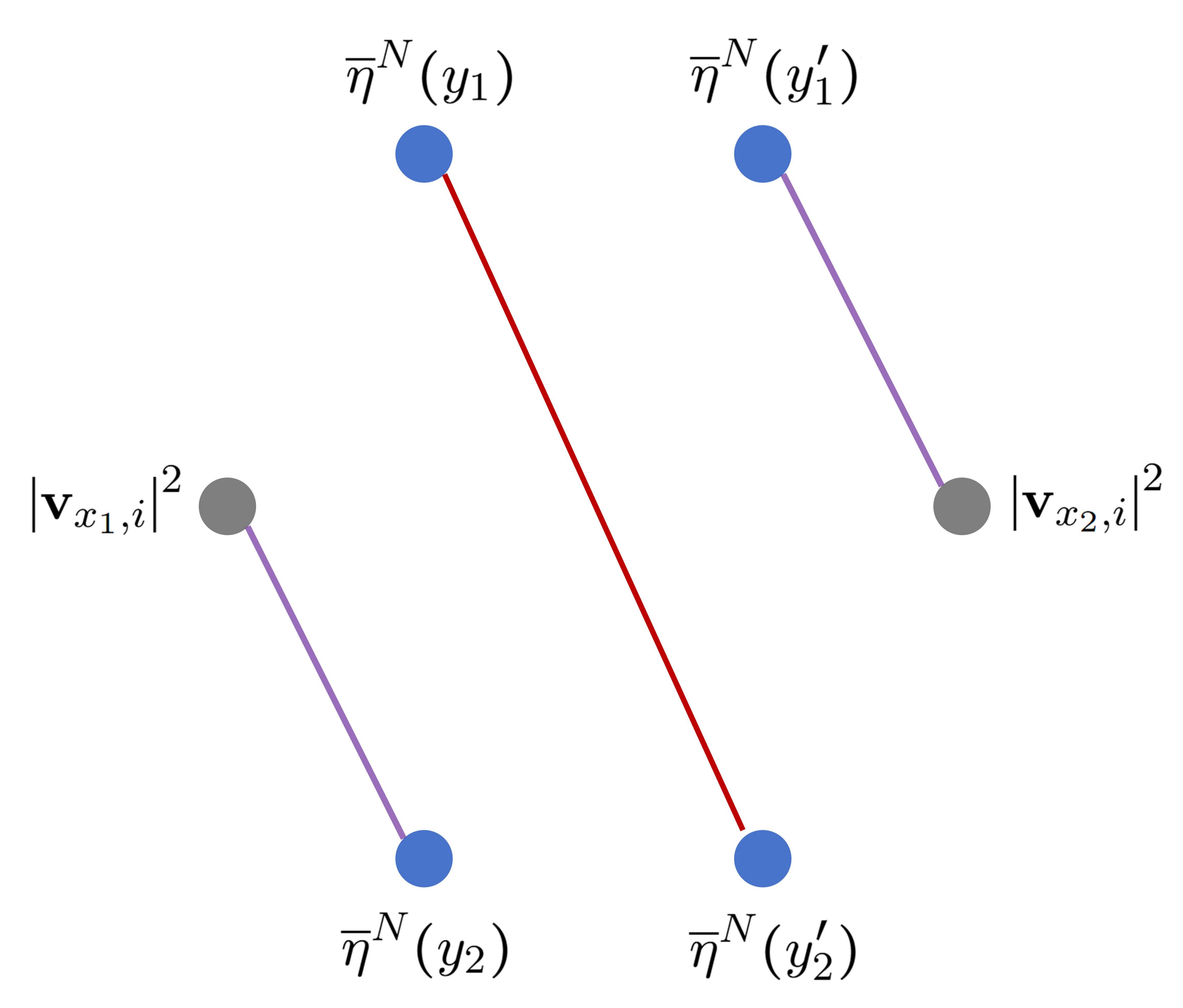}
			\caption*{Case 2: $k=2$.}
		\end{figure}
		
		Therefore, the contribution in \eqref{eq.I_1.I_1.expectation.square} is \begin{align}\label{eq.I_1.I_1.expectation.square.Case2.2}
			&\quad N^{-4d}\sum_{\mathcal E_{2.2}}\E_\rho\left[\bar\eta^N(y_1)\bar\eta^N(y_1')\bar\eta^N(y_2)\bar\eta^N(y_2')\left|\mathbf{v}_{x_1,i}\right|^2\left|\mathbf{v}_{x_2,i}\right|^2\right]\\
			&\leq 2N^{-4d}\sum_{\substack{z_1,z_2\in \mathcal Z_L }}\sum_{i=1}^d\sum_{x_1\in \Lambda_L^{z_1}}\sum_{x_2\in\Lambda_L^{z_2}}\sum_{y_1,y_1'\in\Lambda_L^{z_2}\cup \{x_2+e_i\}}
			\sum_{\substack{y_2,y_2'}}\notag\\
			&\hspace{2cm}\E_\rho\left[\bar\eta^N(y_1)\bar\eta^N(y_1')\left|\mathbf{v}_{x_2,i}\right|^2\right]\E_\rho\left[\bar\eta^N(y_2)\bar\eta^N(y_2')\right]\E_\rho\left[\left|\mathbf{v}_{x_1,i}\right|^2\right]\notag\\
			&\quad+4N^{-4d}\sum_{\substack{z_1,z_2\in \mathcal Z_L }}\sum_{i=1}^d\sum_{x_1\in \Lambda_L^{z_1}}\sum_{x_2\in\Lambda_L^{z_2}}\sum_{y_1\in\Lambda_L^{z_2}\cup \{x_2+e_i\}}\sum_{y_2\in\Lambda_L^{z_1}\cup\{x_1+e_i\}}\sum_{y_1',y_2'}\notag\\
			&\hspace{2cm}\E_\rho\left[\bar\eta^N(y_1)\left|\mathbf{v}_{x_2,i}\right|^2\right]\E_\rho\left[\bar\eta^N(y_2)\left|\mathbf{v}_{x_1,i}\right|^2\right]\E_\rho\left[\bar\eta^N(y_1')\bar\eta^N(y_2')\right]\notag\\
			&\leq CN^{-4d}\left(\frac{N}{L}\right)^{2d}L^{2d}L^dL^dN^d\Vert \phi_L\Vert_\infty^4\notag\\
			&\leq CN^{-d}L^{2d}\Vert \phi_L\Vert_\infty^4\notag.
		\end{align}
            \item $k = 3$: We have a quick observation that if $k=3$, then the only $\bar\eta^N(y)$ without bond is isolated, which makes the expectation zero. Therefore, this case does not contribute.
            
            \item $k = 4$: Viewing \eqref{eq.defE}, $\bar\eta^N(y_1)$ and $\bar\eta^N(y_1')$ cannot connect $\left|\mathbf{v}_{x_1,i}\right|^2$, so they connect $\left|\mathbf{v}_{x_2,i}\right|^2$. By the similar argument, $\bar\eta^N(y_2)$ and $\bar\eta^N(y_2')$ connect  $\left|\mathbf{v}_{x_1,i}\right|^2$. The contribution in $\mathcal E_*$ is
		\begin{equation*}
			\mathcal E_{2.4}=\mathcal E_*\cap\left\{z_1\neq z_2, \ y_1,y_1' \in \Lambda_L^{z_2}\cup\{x_2+e_i\},\ y_2,y_2' \in \Lambda_L^{z_1}\cup\{x_1+e_i\}\right\}.
		\end{equation*}
		
		\begin{figure}[htbp]
			\centering
			\includegraphics[width=0.6\textwidth]{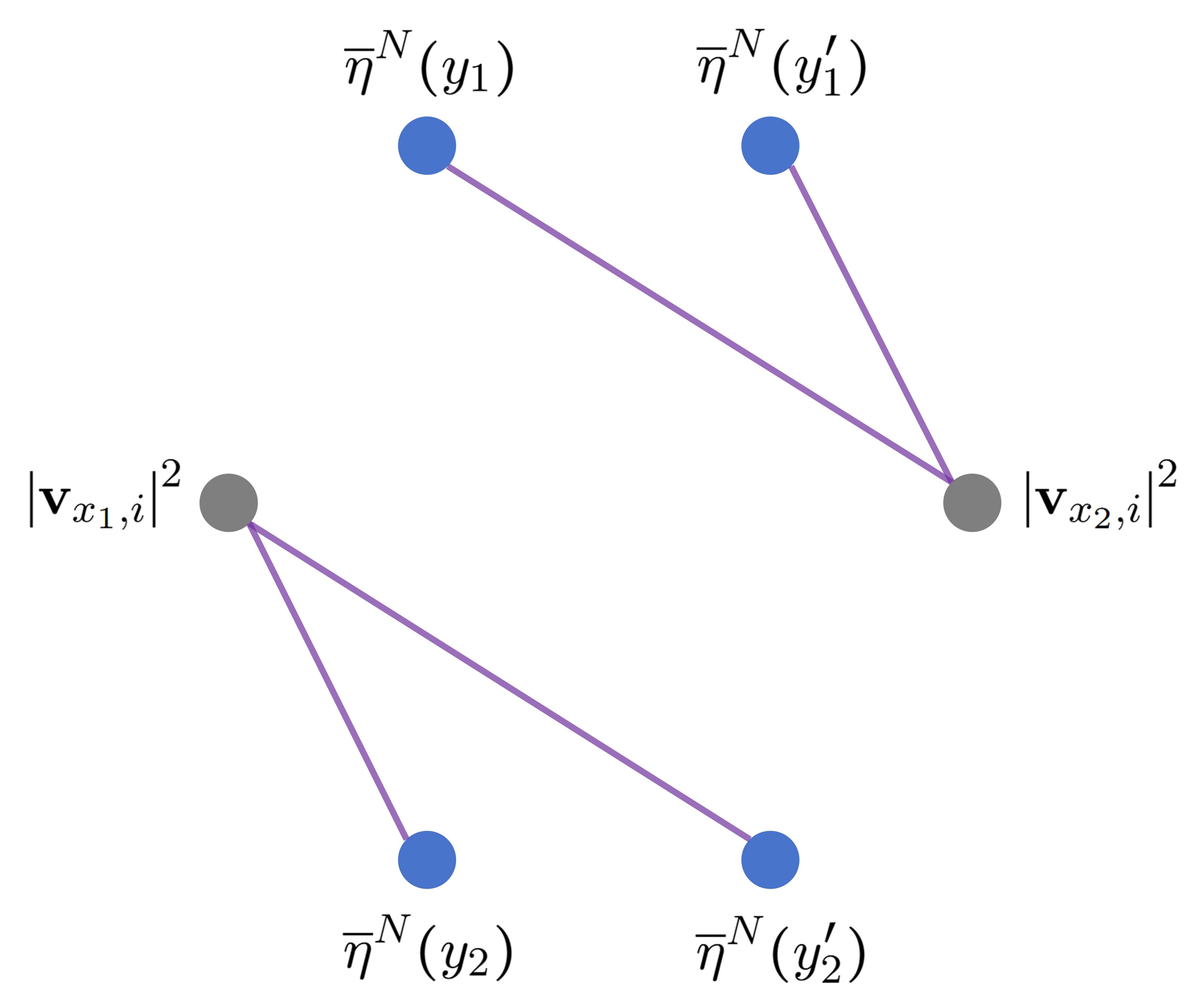}
			\caption*{Case 2: $k=4$.}
		\end{figure}
		
		The contribution in \eqref{eq.I_1.I_1.expectation.square} is
		\begin{align}\label{eq.I_1.I_1.expectation.square.Case2.4}
			&\quad N^{-4d}\sum_{\mathcal E_{2.4}}
			\E_\rho\left[\bar\eta^N(y_1)\bar\eta^N(y_1')\bar\eta^N(y_2)\bar\eta^N(y_2')\left|\mathbf{v}_{x_1,i}\right|^2\left|\mathbf{v}_{x_2,i}\right|^2\right]\\
			&=N^{-4d}\sum_{\substack{z_1,z_2\in \mathcal Z_L }}\sum_{i=1}^d\sum_{x_1\in \Lambda_L^{z_1}}\sum_{x_2\in \Lambda_L^{z_2}}\sum_{\substack{y_1,y_1'\in \Lambda_L^{z_2}\cup\{x_2+e_i\}}}\sum_{\substack{y_2,y_2'\in \Lambda_L^{z_1}\cup\{x_1+e_i\}}}\notag\\
			&\qquad\E_\rho\left[\bar\eta^N(y_1)\bar\eta^N(y_1')\left|\mathbf{v}_{x_2,i}\right|^2\right]\E_\rho\left[\bar\eta^N(y_2)\bar\eta^N(y_2')\left|\mathbf{v}_{x_1,i}\right|^2\right]\notag\\
			&\leq CN^{-4d} \left(\frac{N}{L}\right)^{2d} L^{2d} L^{2d}L^{2d}\Vert \phi_L\Vert_\infty^4\notag\\
			&\leq CN^{-2d}  L^{4d}\Vert \phi_L\Vert_\infty^4\notag.
		\end{align}
        \end{itemize}

		Combining \eqref{eq.I_1.I_1.expectation.square}-\eqref{eq.I_1.I_1.expectation.square.Case2.4}, we conclude that for every test function $f$, 
		\begin{equation*}
			N^{4} \sum_{i=1}^d \sum_{x_1,x_2}\E_\rho\left[{\mathcal I}_1^N(f,x_1,i)^2{\mathcal I}_1^N(f,x_2,i)^2\right]\leq C_{f},
		\end{equation*}
		which finishes the proof of the $L^2$-boundedness for $\mathcal B^N(f)$.
	\end{proof}

	\subsection{Characterization of limit}\label{sec.M.limit}
	In this section, we characterize the limit of $\left\langle{\mathcal M}^N(f)\right\rangle_t$. We first give our result.
	\begin{proposition}[Limit of quadratic variation]\label{prop.limit.quadratic.variation}
		For every $f\in C^{\infty}\left(\T^{2d}\right)$, the limit quadratic variation $\left\langle{\mathcal M}(f)\right\rangle_t$ is characterized by
		\begin{multline*}
			\left\langle{\mathcal M}(f)\right\rangle_t=\int_0^t\int_{\T^d}\left\{\Y_s\left(\nabla_{1}f(x,\cdot)\right)+\Y_s\left(\nabla_{2}f(\cdot,x)\right)\right\}\\\cdot \cc(\rho)\left\{\Y_s\left(\nabla_{1}f(x,\cdot)\right)+\Y_s\left(\nabla_{2}f(\cdot,x)\right)\right\}\,\d x\,\d s.
		\end{multline*}
	\end{proposition}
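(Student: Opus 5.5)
The argument runs in the carré du champ $\mathcal B^N_s(f)$. By Proposition~\ref{prop.tightness.quadratic.M}, $(\M^{N_k},\la\M^{N_k}\ra)$ converges jointly along a subsequence. The limit $\la\M\ra$ is therefore identified by showing
\[
\int_0^t\mathcal B^N_s(f)\,\d s-\int_0^t\Phi(\Y^N_s)\,\d s\xrightarrow[N\to\infty]{L^1}0 ,
\]
where $\Phi$ is a continuous functional of the density field, and then passing to the limit via the joint convergence $(\Y^N,\M^N)$ of Remark~\ref{remark.probability space}. The natural candidate is
\[
\Phi(\Y)=\int\!\mathbf G\cdot\cc(\rho)\mathbf G,\qquad \mathbf G(u)=\Y(\nabla_1 f(u,\cdot))+\Y(\nabla_2 f(\cdot,u)).
\]

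\textbf{Step 1 (reduction to the main term).} Using \eqref{eq.pi.Q}, Lemma~\ref{lemma.I_1.expectation}, Lemma~\ref{lemma.R_1.expectation} and Cauchy--Schwarz, the cross and remainder terms contribute $o(1)$ in $L^1$ under Hypothesis~\ref{hyp}. Hence $\mathcal B^N$ may be replaced by
\[
N^2\sum_{i,x}c_{x,x+e_i}\,\mathcal I_1^N(f,x,i)^2 .
\]
By symmetry of $f$, this equals $N^{-2d}\sum_{z}\sum_{x\in\Lambda_L^z,i}c_{x,x+e_i}\big(\mathbf v_{x,i}\cdot \mathbf G^N_z\big)^2$. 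Here
\[
\mathbf G^N_z:=2N^{-d/2}\sum_{y\notin\Lambda^z_L}\bar\eta^N(y)\nabla_1^Nf(z/N,y/N)
\]
is, up to the excluded box (an $O(L^dN^{-d})$ error in $L^2$), the block-evaluated field $\mathbf G(z/N)$ applied to $\Y^N$. The factor $2$ accounts for both gradients.

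\textbf{Step 2 (block decoupling and local ergodicity).} $\mathbf G^N_z$ depends only on sites outside $\Lambda_L^z$, while $\mathbf v_{x,i}$ for $x\in\Lambda_L^z$ depends on sites inside it (up to the boundary bond, which is negligible at rate $L^{-1}$). So for each block the quantity is $\mathbf G_z^{N,T}A_z\mathbf G_z^N$, with
\[
A_z:=\sum_{x\in\Lambda_L^z,i}c_{x,x+e_i}\,\mathbf v_{x,i}\mathbf v_{x,i}^T .
\]
I replace $L^{-d}A_z$ by its expectation. By \cite[Proposition~6.1]{FGW24} (cf. \eqref{eq.I_1.expectation.square.2}), this expectation is $2\cc(\rho)+O(L^{-\gamma_1})$; the $2$ cancels against the $\tfrac12$ in \eqref{eq.c.form}, and constants will need care. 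It then remains to show that
\[
N^{-2d}L^{d}\sum_z \mathbf G_z^{N,T}\big(L^{-d}A_z-\E A_z L^{-d}\big)\mathbf G_z^N\to0
\]
in $L^1$. I would compute its second moment with the same Wick-diagram bookkeeping as in the proof of Proposition~\ref{prop.B.bound.L1.L2}. Centered block variables $A_z-\E A_z$ in different blocks are independent, and they are independent of $\mathbf G_z$ except through sites of other blocks. The diagonal $z_1=z_2$ terms give $(N/L)^dL^{2d}\operatorname{Var}(L^{-d}A_z)\,N^{-4d}N^{2d}$-type bounds, which are small provided $\operatorname{Var}(L^{-d}A_z)=O(L^{-d}\,\mathrm{poly}(L))$ with growth beaten by $N^{-d}$. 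Off-diagonal terms require a $\bar\eta(y)$ inside another block to pair with the centered $A$, and they are controlled using $\|\phi_L\|_\infty\le CL^{d+2}\log L$ together with $L^{100}\ll N$, exactly as in Cases~2 ($k=1,2,4$) above.

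\textbf{Step 3 (passage to the limit).} After Step 2,
\[
\mathcal B^N_s\approx N^{-d}\sum_z \mathbf G^N_z\cdot 2\cc(\rho)\mathbf G^N_z ,
\]
a Riemann sum which, up to a smoothing error $O(L/N)$ in $f$, is a continuous quadratic functional of $\Y^N_s$ tested against smooth functions. Since this functional is quadratic and hence unbounded, convergence of $\int_0^t\Phi(\Y^{N}_s)\,\d s$ to $\int_0^t\Phi(\Y_s)\,\d s$ needs Proposition~\ref{prop.linear fluctuation} together with uniform integrability, which follows from uniform fourth moments of $\Y^N(g)$ under $\Pr$. Joint convergence with $\la\M^{N_k}\ra$ then identifies the limit, and uniqueness of limits gives the formula. \textbf{The main obstacle} is Step 2: the variance of the block average $L^{-d}A_z$ is not small uniformly, because $\phi_L$ is only controlled in $H^1$ and the $L^\infty$ bound is huge. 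I would first try to absorb it using the crude polynomial $L^\infty$ bound and the gap $L^{100}\ll N$; only if that fails would I invoke the quantitative homogenization bounds of Lemma~\ref{lem.Homogenization}.
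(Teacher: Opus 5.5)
Your argument is correct in outline. It shares the paper's skeleton: reduce $\mathcal B^N$ to a main term via Lemmas~\ref{lemma.I_1.expectation} and~\ref{lemma.R_1.expectation}, replace the local energy density by its mean, identify the mean with $\cc(\rho)$ via \cite{FGW24}, and conclude with \eqref{eq.M.Nk.joint}. The central ergodic step, however, is genuinely different.

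The paper passes to $\mathcal I_2^N$, whose field $\Y^N(\nabla_1^N f(x/N,\cdot))$ sits at $x$ with no excluded box. It then introduces a scale $L\ll K\ll N$ and freezes the field on $\Lambda_K^x$ by smoothness in $x$. Next it proves a law of large numbers for $|\Lambda_K|^{-1}\sum_{a\in\Lambda_K^x}c_{a,a+e_i}\mathbf v_{a,i}\mathbf v_{a,i}^{T}$, using independence along the sublattices $\mathcal K_n$ and Burkholder's inequality. Finally it separates this average from the field by plain Cauchy--Schwarz, so no independence between field and local term is ever used.

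You instead keep $\mathcal I_1^N$, whose field is frozen at the block centre and lives off the block. Each block then contributes $\mathbf G^N_z\cdot A_z\mathbf G^N_z$ with $\mathbf G^N_z$ independent of $A_z$, and you kill $B_z:=L^{-d}(A_z-\E_\rho[A_z])$ by a second-moment bound. This closes with the crude bound of Lemma~\ref{lem.Homogenization}, so the obstacle you anticipate does not arise; you never need $\mathrm{Var}(L^{-d}A_z)$ to be small.
Diagonal and adjacent pairs give $N^{-d}L^{d}\Vert\phi_L\Vert_\infty^4$. For distant pairs, centring forces each block to absorb one of the other block's $\bar\eta(y)$; configurations reaching only one block, or leaving an unpaired site, vanish. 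This gives $(N^{-d}L^{2d}+N^{-2d}L^{4d})\Vert\phi_L\Vert_\infty^4$.

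Your route avoids the extra scale $K$ and the freezing error $KN^{-1}\Vert\phi_L\Vert_\infty^2$. Controlling that error needs a power of $L$ growing with $d$ to be $\ll N$, whereas you only need $L^{2d}\Vert\phi_L\Vert_\infty^4\ll N^d$. The price is the pairing bookkeeping plus one repair. Since $c_b$ has range $\mathbf r$ and boundary bonds leave the box, $A_z$ is only $\F_{\Lambda^z_{L+2\mathbf r+2}}$-measurable. You should therefore exclude that enlarged box from $\mathbf G^N_z$, at negligible cost, exactly as in the passage from $\mathcal I_1^N$ to $\mathcal I_3^N$ in Section~\ref{sec.A.limit}.

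Your constants need fixing.
\begin{itemize}
\item The prefactor is $N^{-d}$, not $N^{-2d}$: $N^2\sum_{i,x}c_{x,x+e_i}\mathcal I_1^N(f,x,i)^2=N^{-d}\sum_z\mathbf G^N_z\cdot A_z\mathbf G^N_z$.
\item The block mean is $L^{-d}\E_\rho[A_z]=\cc(\rho)+O(L^{-\gamma_1})$, not $2\cc(\rho)$. The sum over $|x|=1$ in \eqref{eq.c.form} counts each direction twice, and the $\frac12$ compensates exactly.
\end{itemize}
The limiting object is then the Riemann sum $N^{-d}L^{d}\sum_z\mathbf G^N_z\cdot\cc(\rho)\mathbf G^N_z$, which gives precisely the stated formula. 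The paper's factor $4$ is your $\mathbf G=2\Y(\nabla_1 f(u,\cdot))$ for symmetric $f$; with $2\cc(\rho)$ you would be off by a factor $2$. For non-symmetric $f$, apply the result to $f_{\textit{sym}}$.
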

	\begin{proof}
		We first calculate for test function $f$, and then extend to $f\in C^{\infty}\left(\T^{2d}\right)$. 
		
		\textbf{Step 1}:  Replace $\mathcal B^N(f)$ by $\mathcal {\tilde B}^N(f)$ with a small error. Here $\mathcal {\tilde B}^N(f)$ reads
		\begin{equation*}
			\tilde {\mathcal B}^N(f)=N^{2}\sum_{i=1}^d\sum_x\\c_{x,x+e_i}
			\mathcal I_2^N(f,x,i)^2,
		\end{equation*}
		where $\mathcal I_2^N(f,x,i)$ is a replacement for $\pi_{x,x+e_i}\mathcal Q^N(f)$:
		\begin{equation*}
			\mathcal I_2^N(f,x,i):=2N^{-1-d}\sum_{y}\bar\eta^N(y)\nabla^N_{1}f\left(\frac{x}{N},\frac{y}{N}\right)\cdot\mathbf{v}_{x,i}=2N^{-1-\frac{d}{2}}\Y^N\left(\nabla^N_{1,i}f\left(\frac{x}{N},\frac{\cdot}{N}\right)\right)\cdot\mathbf{v}_{x,i}.
		\end{equation*}
		Denote the error between $\mathcal I_2^N(f,x,i)$ and $\pi_{x,x+e_i}\mathcal Q^N(f)$ by 
		\begin{equation*}
			\mathcal R_{2}^N(f,x,i):=\mathcal I_2^N(f,x,i)-\pi_{x,x+e_i}\mathcal Q^N(f).
		\end{equation*}  
		We calculate the second moment of $\mathcal R_2^N$ to be small in Lemma~\ref{lemma.R_2.expectation}, so that the replacement holds. The calculation is in the appendix. After these preparations, we replace $\mathcal B^N(f)$ by $\tilde{\mathcal B}^N(f)$ and calculate the error:
		\begin{align*}
			&\quad\E_\rho\left[\left|\mathcal B^N(f)-\tilde{\mathcal B}^N(f)\right|\right]\\
			&\leq N^2\sum_{i=1}^d\sum_x\E_{\rho}\left[\left|\pi_{x,x+e_i}\mathcal Q^N(f)^2-\mathcal I_2(f,x,i)^2\right|\right]\notag\\
			&= N^2\sum_{i=1}^d\sum_x\E_{\rho}\Big[\Big|-2\pi_{x,x+e_i}\mathcal Q^N(f)\mathcal R_{2}^N(f,x,i)-\mathcal R_{2}^N(f,x,i)^2\Big|\Big]\notag\\
			&\leq 2N^2\sum_{i=1}^d\sum_x\E_{\rho}\left[\left|\pi_{x,x+e_i}\mathcal Q^N(f)\mathcal R_{2}^N(f,x,i)\right|\right]+N^2\sum_{i=1}^d\sum_x\E_{\rho}\left[\mathcal R_{2}^N(f,x,i)^2\right]\notag,
		\end{align*}
		where 
		\begin{align*}
			&\quad N^2\sum_{i=1}^d\sum_x\E_{\rho}\left[\left|\pi_{x,x+e_i}\mathcal Q^N(f)\mathcal R_{2}^N(f,x,i)\right|\right]\\
			&\leq N^2\sum_{i=1}^d\sum_x\E_{\rho}\left[\left(\pi_{x,x+e_i}\mathcal Q^N(f)\right)^2\right]^\frac{1}{2}\E_{\rho}\left[\mathcal R_{2}^N(f,x,i)^2\right]^{\frac{1}{2}}\notag\\
			&\leq \E_{\rho}\left[N^2\sum_{i=1}^d\sum_x\left(\pi_{x,x+e_i}\mathcal Q^N(f)\right)^2\right]^\frac{1}{2}\E_{\rho}\left[N^2\sum_{i=1}^d\sum_x\mathcal R_{2}^N(f,x,i)^2\right]^{\frac{1}{2}}\notag\\
			&\leq \E_{\rho}\left[\mathcal B^N(f)\right]^\frac{1}{2}\E_{\rho}\left[N^2\sum_{i=1}^d\sum_x\mathcal R_{2}^N(f,x,i)^2\right]^{\frac{1}{2}}\notag.
		\end{align*}
		The $L^1$-boundedness of $\mathcal B^N(f)$ has been shown in Proposition~\ref{prop.B.bound.L1.L2}. Therefore, the replacement is done as long as we prove that 
		\begin{equation*}
			N^2\sum_{i=1}^d\sum_x\E_{\rho}\left[\mathcal R_{2}^N(f,x,i)^2\right]\leq C_{f}N^{-2\gamma},
		\end{equation*}
		for some $\gamma> 0$, which is given by Lemma~\ref{lemma.R_2.expectation}, with an error given by 
        \begin{equation}\label{eq.error for the replacement of B}
            \E_\rho\left[\left|\mathcal B^N(f)-\tilde{\mathcal B}^N(f)\right|\right]\leq C_f N^{-\gamma}.
        \end{equation}

		\textbf{Step 2}: Characterize the limit of $\tilde{\mathcal B}^N(f)$.
		Unlike the standard fluctuation field setting, the test function
        \[
        \nabla_{1,i}^N f\Big(\frac{x}{N},\frac{y}{N}\Big)
        \]
        still depends on the microscopic variable $x$. Therefore, 
        \[
        \mathcal Y^N\Big(\nabla_{1,i}^N f\Big(\frac{x}{N},\frac{y}{N}\Big)\Big)
        \]
        is not constant in $x$, and a direct law of large numbers argument is not applicable.  To overcome this barrier, we need to take advantage of the continuity of $x$ with respect to $\nabla^N_{1,i}f(x/N,\cdot/N)$. We take $K(N)=o(N)$, with $L(N)\ll K(N)\ll N$. Without loss of generality, we assume that $L$ is a factor of $K$ and $K$ is a factor of $N$. For $a \in \Lambda_K$ and $1\leq i\leq d$, we have 
		\[
		\sup_{x,y} \left|\nabla^N_{1,i}f\left(\frac{x}{N},\frac{y+a}{N}\right)-\nabla^N_{1,i}f\left(\frac{x}{N},\frac{y}{N}\right)\right|\leq N^{-1}K,
		\]
		where we used the uniform $L^2$-boundedness of the fluctuation field $\Y^N$. This gives for $a \in \Lambda_K$, and $1\leq j_1,j_2\leq d$,
		\begin{align*}
			&\E_{\rho}\bigg[\bigg|\Y^N\left(\nabla^N_{1,j_1}f\left(\frac{x}{N},\frac{\cdot+a}{N}\right)\right)\Y^N\left(\nabla^N_{1,j_2}f\left(\frac{x}{N},\frac{\cdot+a}{N}\right)\right)\\
			&\hspace{5cm}-\Y^N\left(\nabla^N_{1,j_1}f\left(\frac{x}{N},\frac{\cdot}{N}\right)\right)\Y^N\left(\nabla^N_{1,j_2}f\left(\frac{x}{N},\frac{\cdot}{N}\right)\right)\bigg|\bigg]\\
			&\leq \E_{\rho}\bigg[\bigg|\Y^N\left(\nabla^N_{1,j_1}f\left(\frac{x}{N},\frac{\cdot+a}{N}\right)\right)\left(\Y^N\left(\nabla^N_{1,j_2}f\left(\frac{x}{N},\frac{\cdot+a}{N}\right)\right)-\Y^N\left(\nabla^N_{1,j_2}f\left(\frac{x}{N},\frac{\cdot}{N}\right)\right)\right)\bigg\vert\bigg]\\
			&\ \ +\E_\rho\bigg[\bigg\vert\left(\Y^N\left(\nabla^N_{1,j_1}f\left(\frac{x}{N},\frac{\cdot+a}{N}\right)\right)-\Y^N\left(\nabla^N_{1,j_1}f\left(\frac{x}{N},\frac{\cdot+a}{N}\right)\right)\right)\Y^N\left(\nabla^N_{1,j_2}f\left(\frac{x}{N},\frac{\cdot}{N}\right)\right)\bigg|\bigg]\\
			&\leq C_fN^{-1}K.
		\end{align*}
		The above result shows that we can replace
		\[
		\Y^N\left(\nabla^N_{1,j_1}f\left(\frac{x}{N},\frac{\cdot}{N}\right)\right)\Y^N\left(\nabla^N_{1,j_2}f\left(\frac{x}{N},\frac{\cdot}{N}\right)\right)
		\]
		by the average 
		\[
		\frac{1}{|\Lambda_K|}\sum_{a\in \Lambda_K}\Y^N\left(\nabla^N_{1,j_1}f\left(\frac{x}{N},\frac{\cdot+a}{N}\right)\right)\Y^N\left(\nabla^N_{1,j_2}f\left(\frac{x}{N},\frac{\cdot+a}{N}\right)\right),
		\]
		with an error up to $C_fN^{-1}K$ in $L^1$. Passing the summation to $x$, we obtain
		\begin{multline}\label{ineq.average.B}
			\E_\rho\left[\mathcal {\tilde B}^N(f)-\frac{4}{\vert \Lambda_K\vert}N^{-d}\sum_{i=1}^d\sum_x\sum_{a\in \Lambda_K^x}c_{a,a+e_i}
			\Bigg\{\Y^N\left(\nabla^N_{1}f\left(\frac{x}{N},\frac{\cdot}{N}\right)\right)
			\cdot\mathbf{v}_{a,i}\Bigg\}^2\right]\\
			\leq C_fN^{-1}K\Vert \phi_L\Vert_\infty^2.
		\end{multline}
        Our next discussion is to replace $c_{a,a+e_i}\mathbf{v}_{a,i,j_1}\mathbf{v}_{a,i,j_2}$ by its expectation. Now we need to calculate the summation $\sum_{a\in \Lambda_K^x}$ by groups. Recall that $c_{y,y'}(\eta)$ depends only on $\{\eta_z: \vert z - y\vert \leq \r\}$ for some integer $\r > 0$. When $N$ is large enough such that $L(N)> 2\mathbf{r}$, we sum over $\mathcal K_n$, $1\leq n\leq (2L)^d$, where 
		\begin{align*}
			\mathcal K_n:=\left\{a\in \Lambda_K^x: a-a_n\in 2L \mathbb Z^d\right\}.
		\end{align*}
		and $a_n\in \Lambda_{2L}^x$ such that $a_n\neq a_m$ if $n\neq m$. By construction, if \(a,a'\in \mathcal K_n\) and \(a\neq a'\), then
        \[
        (a+\Lambda_L)\cap(a'+\Lambda_L)=\varnothing ,
        \]
        and the corresponding random variables are independent under \(\P_\rho\). Therefore, We can find that in every group $\mathcal K_n, 1\leq n\leq (2L)^d$, $c_{a,a+e_i}\mathbf{v}_{a,i,j_1}\mathbf{v}_{a,i,j_2}$ are independent of each other for different $a\in \mathcal K_n$. Using
		\[
		\vert x_1+x_2+\cdots+x_m\vert^{2p}\leq m^{2p-1}(|x_1|^{2p}+|x_2|^{2p}+\cdots|x_m|^{2p}),\quad p>\frac{1}{2},
		\]
		we obtain 
		\begin{align}\label{ineq.group sum}
			&\quad \E_{\rho}\left[\left|\frac{1}{|\Lambda_K\vert}\sum_{a\in \Lambda_K^x}\left(c_{a,a+e_i}\mathbf{v}_{a,i,j_1}\mathbf{v}_{a,i,j_2}-\E_\rho\left[c_{a,a+e_i}\mathbf{v}_{a,i,j_1}\mathbf{v}_{a,i,j_2}\right]\right)\right|^{2p}\right]\\
			&\leq \frac{(2L)^{d(2p-1)}}{|\Lambda_K\vert^{2p}}\sum_{1\leq n\leq (2L)^{d}}\E_{\rho}\left[\left|\sum_{a\in \mathcal J_n}\left(c_{a,a+e_i}\mathbf{v}_{a,i,j_1}\mathbf{v}_{a,i,j_2}-\E_\rho\left[c_{a,a+e_i}\mathbf{v}_{a,i,j_1}\mathbf{v}_{a,i,j_2}\right]\right)\right|^{2p}\right]\notag.
		\end{align}
		Then, by Burkholder's inequality for sums of independent centered random variables, we have the following estimate
		\begin{align}\label{ineq.Burkholder}
			&\ \E_{\rho}\left[\left|\sum_{a\in \mathcal K_n}\left(c_{a,a+e_i}\mathbf{v}_{a,i,j_1}\mathbf{v}_{a,i,j_2}-\E_\rho\left[c_{a,a+e_i}\mathbf{v}_{a,i,j_1}\mathbf{v}_{a,i,j_2}\right]\right)\right|^{2p}\right]\\\leq& \ C_p\left\{\sum_{a\in \mathcal K_n}\E_{\rho}\left[\left(c_{a,a+e_i}\mathbf{v}_{a,i,j_1}\mathbf{v}_{a,i,j_2}-\E_\rho\left[c_{a,a+e_i}\mathbf{v}_{a,i,j_1}\mathbf{v}_{a,i,j_2}\right]\right)^{2}\right]\right\}^p\notag\\
			\leq& \ C_p\vert \mathcal K_n\vert^p\Vert \phi_L\Vert_\infty^{4p}\notag,
		\end{align}
        where we used the uniform boundedness of $c$ and $\mathbf{v}$ in the last step.
		Combining \eqref{ineq.group sum} and \eqref{ineq.Burkholder}, we have a good estimate for the average term
		\begin{equation}\label{ineq.average.G}
			\E_{\rho}\left[\left|\frac{1}{|\Lambda_K\vert}\sum_{a\in \Lambda_K}\left(c_{a,a+e_i}\mathbf{v}_{a,i,j_1}\mathbf{v}_{a,i,j_2}-\E_\rho\left[c_{a,a+e_i}\mathbf{v}_{a,i,j_1}\mathbf{v}_{a,i,j_2}\right]\right)\right|^{2p}\right]\leq C_{p} L^pK^{-p}\Vert \phi_L\Vert_\infty^{4p}.
		\end{equation}
		Now, taking \(p=1\) in \eqref{ineq.average.G}, by Schwarz's inequality, we have
		\begin{align}\label{ineq.L1.martingale}
			&\ \E_\rho\Bigg[\Bigg|\frac{1}{|\Lambda_K\vert}\sum_{a\in \Lambda_K^x}\Y^N\left(\nabla^N_{1,j_1}f\left(\frac{x}{N},\frac{\cdot}{N}\right)\right)\Y^N\left(\nabla^N_{1,j_2}f\left(\frac{x}{N},\frac{\cdot}{N}\right)\right)\\&\hspace{5cm}\left(c_{a,a+e_i}\mathbf{v}_{a,i,j_1}\mathbf{v}_{a,i,j_2}-\E_\rho\left[c_{a,a+e_i}\mathbf{v}_{a,i,j_1}\mathbf{v}_{a,i,j_2}\right]\right)\Bigg|\Bigg]\notag\\\leq& \ \E_{\rho}\left[\left|\frac{1}{|\Lambda_K\vert}\sum_{a\in \Lambda_K^x}\left(c_{a,a+e_i}\mathbf{v}_{a,i,j_1}\mathbf{v}_{a,i,j_2}-\E_\rho\left[c_{a,a+e_i}\mathbf{v}_{a,i,j_1}\mathbf{v}_{a,i,j_2}\right]\right)\right|^{2}\right]^{\frac{1}{2}}\notag\\&\hspace{4cm}\E_\rho\left[\left|\Y^N\left(\nabla^N_{1,j_1}f\left(\frac{x}{N},\frac{\cdot}{N}\right)\right)\Y^N\left(\nabla^N_{1,j_2}f\left(\frac{x}{N},\frac{\cdot}{N}\right)\right)\right|^2\right]^{\frac{1}{2}}\notag\\ \leq&\  C LK^{-1}\Vert \phi_L\Vert_\infty^{2}\notag.
		\end{align}
		Based on \eqref{ineq.average.B} and \eqref{ineq.L1.martingale}, we obtain an improved replacement for $\mathcal{\tilde B}^N(f)$:
		\begin{multline}\label{eq.better replacement}
			\E_{\rho}\Bigg[\Bigg|\mathcal {\tilde B}^N(f)-\frac{4}{\vert \Lambda_K\vert}N^{-d}\sum_{i=1}^d\sum_{x}\sum_{a\in \Lambda_K^x}\sum_{j_1,j_2=1}^d\E_\rho\left[c_{a,a+e_i}\mathbf{v}_{a,i,j_1}\mathbf{v}_{a,i,j_2}\right]\\
			\Y^N\left(\nabla^N_{1,j_1}f\left(\frac{x}{N},\frac{\cdot}{N}\right)\right)\Y^N\left(\nabla^N_{1,j_2}f\left(\frac{x}{N},\frac{\cdot}{N}\right)\right)\Bigg|\bigg]\leq C\left(LK^{-1}+KN^{-1}\right)\Vert \phi_L\Vert_\infty^2.
		\end{multline}
		Finally, we calculate the averaged expectation for each $x$:
		\begin{align*}
			\frac{1}{\vert \Lambda_K\vert}\sum_{i=1}^d\sum_{a\in \Lambda_K^x}\E_\rho\left[c_{a,a+e_i}\mathbf{v}_{a,i,j_1}\mathbf{v}_{a,i,j_2}\right].
		\end{align*}
		Since $L(N)\ll K(N)$ and \(K\) is a multiple of \(L\), we partition \(\Lambda_K\) into disjoint boxes of side length \(L\), centered at $z\in L\mathbb Z^d\cap \Td_N$. If a small box is totally contained in $a\in \Lambda_K^{x}$, then we do the summation inside the box to get
		\begin{equation*}
			\quad\frac{1}{\vert \Lambda_L\vert}\sum_{i=1}^d\sum_{a\in \Lambda_L^z}\E_\rho\left[c_{a,a+e_i}\mathbf{v}_{a,i,j_1}\mathbf{v}_{a,i,j_2}\right]=\cc_{j_1j_2}(\rho)+O(L^{-\gamma_1}),\quad \gamma_1>0,
		\end{equation*}
		where the last step follows from the quantitative homogenization estimate in \cite[Theorem~1.5]{FGW24}. The number of boxes that are totally contained in $\Lambda_K^x$ is at least $\left(\frac{K}{L}-1\right)^d$. If a small box is not totally contained in $\Lambda_K^{x}$, then we use the trivial bound to estimate. Above all, we calculate 
		\begin{align}\label{eq.error in Lambda K}
			&\quad\frac{1}{\vert \Lambda_K\vert}\sum_{i=1}^d\sum_{a\in \Lambda_K^{x}}\E_\rho\left[c_{a,a+e_i}\mathbf{v}_{a,i,j_1}\mathbf{v}_{a,i,j_2}\right]-\cc_{j_1j_2}(\rho)\\
			&=\frac{1}{\vert \Lambda_K\vert}\left(\sum_{a\in \Lambda_K^{x}}\sum_{i=1}^d\E_\rho\left[c_{a,a+e_i}\mathbf{v}_{a,i,j_1}\mathbf{v}_{a,i,j_2}\right]-\cc_{j_1j_2}(\rho)\right)\notag\\
			&\leq \frac{1}{\vert \Lambda_K\vert} \left\{\left(\frac{K}{L}-1\right)^dL^dL^{-\gamma_1}+\left(\left(\frac{K}{L}\right)^d-\left(\frac{K}{L}-1\right)^d\right)L^d\Vert \phi_L\Vert_\infty^2\right\}\notag\\
			&\leq C\left(L^{-\gamma_1}+K^{-1}L\Vert \phi_L\Vert_\infty^2\right)\notag.
		\end{align}
        Combining \eqref{eq.better replacement} and \eqref{eq.error in Lambda K}, for every test function $f$, there exists $\gamma >0$ such that 
        \begin{equation*}
			\E_{\rho}\Bigg[\Bigg|\mathcal {\tilde B}^N(f)-4N^{-d}\sum_{x}
			\Y^N\left(\nabla^N_{1}f\left(\frac{x}{N},\frac{\cdot}{N}\right)\right)\cdot \mathbf c(\rho)\Y^N\left(\nabla^N_{1}f\left(\frac{x}{N},\frac{\cdot}{N}\right)\right)\Bigg|\bigg]\leq C_fN^{-\gamma}.
		\end{equation*}
		Including the first step of replacement in \eqref{eq.error for the replacement of B}, we have for test function $f$:
		\begin{equation*}
			\E_{\rho}\Bigg[\Bigg|\mathcal B^N(f)-4N^{-d}\sum_{x}
			\Y^N\left(\nabla^N_{1}f\left(\frac{x}{N},\frac{\cdot}{N}\right)\right)\cdot \mathbf c(\rho)\Y^N\left(\nabla^N_{1}f\left(\frac{x}{N},\frac{\cdot}{N}\right)\right)\Bigg|\bigg]\leq C_fN^{-\gamma}.
		\end{equation*}
		Therefore, considering the integral of time, we have a general result for every test function $f$ and $t\in [0,T]$:
		\begin{multline*}
			\E_{\rho}\Bigg[\Bigg|\left\la\mathcal M^N(f)\right\ra_t-\int_0^t4N^{-d}\sum_{x}
			\Y_s^N\left(\nabla^N_{1}f\left(\frac{x}{N},\frac{\cdot}{N}\right)\right)
            \cdot \mathbf c(\rho)\Y_s^N\left(\nabla^N_{1}f\left(\frac{x}{N},\frac{\cdot}{N}\right)\right)\, \d s\Bigg|\Bigg]\\\leq C_fN^{-\gamma}.
		\end{multline*}
        As $N\to\infty$, we have the weak convergence in $D([0, T], \mathcal S'(\Td))$
        \begin{align*}
			(\mathcal Y_t^N)_{t\in [0,T]}\xRightarrow{N\to\infty}(\mathcal Y_t)_{t\in [0,T]},  
		\end{align*}
        and let this limit live in the same space as $\mathcal M$. Using the convergence from discrete derivative to continuous derivative, and taking the integral with respect to time and space, the following weak convergence holds in $D([0, T], \mathbb{R})$:
        \begin{multline*}
            \int_0^tN^{-d}\sum_{x}
			\Y_s^N\left(\nabla^N_{1}f\left(\frac{x}{N},\frac{\cdot}{N}\right)\right)
            \cdot \mathbf c(\rho)\Y_s^N\left(\nabla^N_{1}f\left(\frac{x}{N},\frac{\cdot}{N}\right)\right)\, \d s \\\xRightarrow{N\to\infty} \int_0^t\int_{\T^d}\Y_s\left(\nabla_{1}f(x,\cdot)\right)\cdot \cc(\rho)\Y_s\left(\nabla_{1}f(x,\cdot)\right)\,\d x\,\d s.
        \end{multline*}
        Therefore, using \eqref{eq.M.Nk.joint}, every limit point of $\left\la\mathcal M^N(f)\right\ra_t$ along subsequence is characterized as follows: 
        \begin{equation}\label{eq.characterization.first version}
            \begin{split}
                \left\langle{\mathcal M}(f)\right\rangle_t  &= \lim_{N \to \infty} \la\mathcal M^{N}\ra_t\\
                &=4\int_0^t\int_{\T^d}\Y_s\left(\nabla_{1}f(x,\cdot)\right)\cdot \cc(\rho)\Y_s\left(\nabla_{1}f(x,\cdot)\right)\,\d x\,\d s.
            \end{split}
		\end{equation}

        For general $f\in C^{\infty}\left(\T^{2d}\right)$, consider $f=f_{\textit{sym}}+f_{\textit{asym}}$, where for $u,v\in \Td$,
		\[
		f_{\textit{sym}}(u,v)=\frac{1}{2}(f(u,v)+f(v,u)),
		\]
		and 
		\[
		f_{\textit{asym}}(u,v)=\frac{1}{2}(f(u,v)-f(v,u)).
		\]
		When we repeat the proof for general $f$, the same result holds for $f_{\textit{sym}}$, and all the terms for $f_{\textit{asym}}$ are $0$. We can write in a more symmetric form:
        \begin{equation}\label{eq.symmetric}
            \Y_s\left(\nabla_{1}f_{\textit{sym}}(x,\cdot)\right)= \frac{1}{2}\left(\Y_s\left(\nabla_{1}f(x,\cdot)\right)+\Y_s\left(\nabla_{1}f(\cdot,x)\right)\right).
        \end{equation}
        We combine \eqref{eq.characterization.first version} and \eqref{eq.symmetric} to obtain the desired characterization of the limit quadratic variation.
	\end{proof}

	\section{Drift}\label{section.drift}
	In this section, we give the tightness for the drift term $\mathcal A^N$ and give the characterization of its limit as $N\to\infty$.
	Recall that for all $f\in C^\infty(\T^{2d})$, the drift term ${\mathcal A}_t^N(f)$ has the form
	\begin{equation*}
		\mathcal A_t^N(f)=\int_0^t \mathcal{L}_{N}\mathcal{Q}_s^N(f)\,\d s,   
	\end{equation*}
	where $\mathcal{L}_{N}\mathcal{Q}_s^N(f)$ equals
	\begin{equation*}
		\mathcal{L}_{N}\mathcal{Q}_t^N(f)=N^{2}\sum_{i=1}^d \sum_{x}c_{x,x+e_i}\pi_{x,x+e_i}\mathcal{Q}_t^N(f).
	\end{equation*}
    The following result is the main object in this section.  
    \begin{proposition}[Convergence of the drift term]\label{prop.Characterization for the convergence of the drift term}
		For every function $f\in C^\infty\left(\mathbb{T}^{2d}\right)$, the sequence of $\{\mathcal A^N(f)\}_{N \in \N}$ admits a limit $\mathcal A(f)$ as $N\to\infty$ in $D\left([0,T],\R\right)$. Every limit $\mathcal A(f)$ is in $C\left([0,T],\R\right)$ and satisfies that 
		\begin{equation*}
			\mathcal A_t(f)=\int_0^t \mathrm{Tr}\left(\mathbf D(\rho)Q_s(\partial^2 _{1,2}f)\right)\d s,
		\end{equation*}
        where $Q$ is the associated limit of $\{Q^{N}\}_{N \in \N}$.  
	\end{proposition}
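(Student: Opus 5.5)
We compute $\L_N\mathcal Q^N(f)$ explicitly and show that, up to errors vanishing in $L^2(\P_\rho\otimes \d t)$, it equals $Q^N$ tested against $\mathrm{Tr}(\D(\rho)\partial^2_{1,2}f)$. By symmetry of the test function it suffices to treat test functions, as in the proof of Proposition~\ref{prop.limit.quadratic.variation}; the antisymmetric part of a general $f$ gives a zero contribution.

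\textbf{Step 1: algebraic decomposition.} We first compute $\L_N Q^N(f)$. Since $\pi_{x,x+e_i}$ acts on $\bar\eta(x)\bar\eta(y)$ as a discrete gradient, summation by parts gives
\[
\L_N Q^N(f)=2N^{1-d}\sum_{i}\sum_{x\ne y}\bar\eta(y)\,c_{x,x+e_i}\pi_{x,x+e_i}\ell_{e_i}\big|_{\text{loc}}\,\nabla^N_{1,i}f(\tfrac xN,\tfrac yN),
\]
plus diagonal terms with $|x-y|\le1$. The diagonal terms are $O(N^{1-d}\cdot N^d)$ in magnitude, but they are centered local sums, so their $L^2$ norm is $O(N^{1-d/2})$. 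For $d\ge2$ we control them through the time integral, using the Kipnis--Varadhan $H_{-1}$ bound. Their leading part is the gradient $\nabla_1-\nabla_2$ of a function near the diagonal, which cancels by symmetry of $f$ up to order $N^{-d}\sum_x$ local, and that remainder vanishes.

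Next we compute $\L_N Z^N(f)$. Its leading part is
\[
2N^{1-d}\sum_z\sum_{y\notin\Lambda_L^z}\bar\eta(y)\sum_{b\in\bar{(\Lambda^z_L)^*}} c_b\pi_b\phi^z_L\cdot\nabla^N_1 f(\tfrac zN,\tfrac yN).
\]
The remaining parts are the action of $\L_N$ on $\bar\eta(y)$, of size $N^{2}\cdot N^{-1-d}\|\phi_L\|_\infty\cdot N^{d/2}$ after centering, and are small in the $H_{-1}$ sense by Hypothesis~\ref{hyp}. Grouping bonds by blocks and freezing $x$ at $z(x)$, which costs $L/N$ and is absorbed by Hypothesis~\ref{hyp}, the sum of the two leading parts becomes
\[
2N^{1-d}\sum_z\sum_{y\notin\Lambda^z_L}\bar\eta(y)\sum_{j}\sum_{b\in\bar{(\Lambda^z_L)^*}}\big(\mathbf g^z_{L,e_j,b}+\pi_b\ell_{\D e_j}\big)\,\nabla^N_{1,j}f(\tfrac zN,\tfrac yN).
\]

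\textbf{Step 2: the gradient part.} The term $\pi_b\ell_{\D e_j}$ is an exact gradient. Summing by parts in $x$ a second time gives
\[
N^{-d}\sum_{x\ne y}\bar\eta(x)\bar\eta(y)\,\mathrm{Tr}\big(\D\,\nabla^N_1\nabla^N_1 f\big),
\]
and adding the symmetric contribution yields $Q^N_s(\mathrm{Tr}(\D\partial^2_{1,2}f))$ up to $o(1)$ errors. The errors come from the block boundaries and from the restriction $y\notin\Lambda^z_L$; they are estimated by Wick counting exactly as in Lemma~\ref{lemma.R_1.expectation}.

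\textbf{Step 3: the non-gradient flux (main obstacle).} We must show
\[
\E\Big[\sup_t\Big|\int_0^t 2N^{1-d}\sum_z\sum_y\bar\eta(y)\sum_b\mathbf g^z_{L,e_j,b}\nabla^N_{1,j}f\,\d s\Big|^2\Big]\to0.
\]
We use the Kipnis--Varadhan inequality: this quantity is bounded by $C t\,\|V\|_{-1}^2$, and $\|V\|_{-1}$ is controlled via the variational formula
\[
\|V\|_{-1}^2=\sup_G\big\{2\E[VG]-N^2\textstyle\sum_b\E[c_b(\pi_bG)^2]\big\}.
\]
Write $\E[VG]$ block by block. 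Since $\bar\eta(y)$ for $y\notin\Lambda^z_L$ is invariant under $\pi_b$ for $b$ inside the block, and $\mathbf g^z$ is a flux, we rewrite $\E[\bar\eta(y)\mathbf g^z_b G]$ by integration by parts as an expression involving $\pi_b(\bar\eta(y)G)$. Lemma~\ref{lem.Homogenization}\eqref{lem.FluxReplacement}, applied with $\bar\eta(y)G$ in place of $G$, then gives
\[
|\E[VG]|\le CN^{1-d}L^{d-\alpha}\sum_z\Big(L^{-d}\sum_b\E\big[(\pi_b\textstyle\sum_y\bar\eta(y)\nabla f\,G)^2\big]\Big)^{1/2}.
\]
After Cauchy--Schwarz over $z$ this yields $\|V\|_{-1}^2\le CL^{-2\alpha}\,\E[\text{variance of }\Y^N(\nabla f)]\le C_fL^{-2\alpha}\to0$.

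The delicate point is that $\pi_b$ in Lemma~\ref{lem.Homogenization}\eqref{lem.FluxReplacement} acts on the product $\bar\eta(y)G$, where $\bar\eta(y)$ is unbounded in the $N$-scaling after summation over $y$. I would first try to handle this by conditioning on $\F_{(\Lambda^z_L)^c}$ and applying the lemma to the conditional expectation, which is legitimate because $\mathbf g^z$ is $\F_{\Lambda^z_L}$-measurable. The remaining $\E$-averages of $\Y^N(\nabla f)^2$ are bounded uniformly.

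\textbf{Step 4: tightness and identification.} With these replacements,
\[
\mathcal A^N_t(f)=\int_0^t Q^N_s\big(\mathrm{Tr}(\D\partial^2_{1,2}f)\big)\,\d s+o_{L^2}(1)
\]
uniformly in $t$. The integrand is bounded in $L^2$, since $\E[Q^N(g)^2]\le C\|g\|_\infty^2$, so by Cauchy--Schwarz in time $\omega(\mathcal A^N,r)\le Cr^{1/2}$ in $L^2$. Proposition~\ref{Prop.Tightness.real-valued} then gives C-tightness. Finally, along a subsequence on which $Q^N\Rightarrow Q$ (with $Z^N\to0$ by Lemma~\ref{lemma.Z}), the continuous map $Q\mapsto\int_0^\cdot Q_s(g)\,\d s$ identifies the limit $\mathcal A_t(f)=\int_0^t\mathrm{Tr}(\D(\rho)Q_s(\partial^2_{1,2}f))\,\d s$.
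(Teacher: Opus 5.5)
Your plan is essentially the paper's argument. The paper proves $\|\L_N\mathcal Q^N(f)-Q^N(\Delta^N_{\mathbf D}f)\|_{H^{-1}}\le C_fN^{-\gamma}$ by pairing with $\pi_bG$ from the start; your strong-form bookkeeping reduces to this once you use that $\mathbf g^z_{L,e_j,b}$ is antisymmetric under the exchange. It then controls the flux part with Lemma~\ref{lem.Homogenization}~(\ref{lem.FluxReplacement}) after a conditioning that decouples $\sum_y\bar\eta(y)\nabla f$ from $G$, and concludes with Kipnis--Varadhan, the $L^2$ bound on $Q^N(\Delta^N_{\mathbf D}f)$ and identification along a subsequence, exactly as in your Step~4.

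One point needs correcting: $\mathbf g^z_{L,e_j,b}$ is only $\F_{\Lambda^z_{L+2\r+2}}$-measurable, and boundary bonds of $\bar{(\Lambda^z_L)^*}$ leave $\Lambda^z_L$. Your cutoff must therefore be $y\notin\Lambda^z_{2L}$, with the annulus absorbed as a remainder as in Lemma~\ref{lemma.minus.expectation}; after that, your conditioning on $\F_{(\Lambda^z_{2L})^c}$ works just as well as the paper's conditioning on $\F_{\Lambda^z_{2L}}$.
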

    We will justify the tightness in Section~\ref{sec.A.tight} and then give the characterization of its limit as $N\to\infty$ in Section~\ref{sec.A.limit}.
	\subsection{Tightness}\label{sec.A.tight}
	In this section, we prove the tightness for the drift term. The proof of tightness relies on two tools: one is Kipnis--Varadhan lemma; the other is the replacement lemma for the drift term. We postpone the replacement lemma in the next section and state first Kipnis--Varadhan lemma. Here we define the Sobolev norm $H^{k}$ to be 
	\begin{equation*}
		\Vert G\Vert_{H^k}^2:=\E_{\rho}\left[G (-\L_N)^{k}G\right],\qquad k\in\mathbb Z.
	\end{equation*}
	\begin{lemma}[\cite{komorowski2012fluctuations}, Kipnis--Varadhan]\label{Kipnis--Varadhan}
		For fixed $t>0$, there exists a uniform constant $C$ such that for centered $ G\in L^2\cap H^{-1}$, we have
		\begin{equation*}
			\E_\rho\left[\sup_{0\leq s\leq t}\left(\int_0^s G\left(\eta^N_\kappa\right)\, \d \kappa\right)^2\right]\leq Ct \Vert G\Vert_{H^{-1}}^2.
		\end{equation*}
	\end{lemma}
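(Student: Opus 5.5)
The plan is to reduce the Kipnis--Varadhan estimate to the variational formula for the $H^{-1}$ norm combined with a martingale decomposition of the additive functional. Since $\Pr$ is reversible for $\L_N$, the generator $-\L_N$ is a nonnegative self-adjoint operator on $L^2(\Pr)$. For centered $G\in L^2\cap H^{-1}$ we have the variational characterization
\begin{equation*}
\Vert G\Vert_{H^{-1}}^2=\sup_{F}\Ll\{2\E_\rho[GF]-\E_\rho\Ll[F(-\L_N)F\Rr]\Rr\},
\end{equation*}
with the supremum over local $F$. On the torus $\X^N$ is finite, so every function is in the domain and the spectral calculus is elementary. We fix $\lambda>0$ and introduce the resolvent solution $u_\lambda:=(\lambda-\L_N)^{-1}G$.

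We then decompose $\int_0^s G(\eta^N_\kappa)\,\d\kappa$ using Dynkin's formula for $u_\lambda$:
\begin{equation*}
\int_0^s G(\eta^N_\kappa)\,\d\kappa=u_\lambda(\eta^N_0)-u_\lambda(\eta^N_s)+\lambda\int_0^s u_\lambda(\eta^N_\kappa)\,\d\kappa+M^\lambda_s,
\end{equation*}
where $M^\lambda$ is a martingale. Its quadratic variation has expectation $2s\,\E_\rho[u_\lambda(-\L_N)u_\lambda]$, computed via the carr\'e du champ exactly as in \eqref{eq.quadratic variation}. Doob's inequality then bounds the supremum of the martingale part by $8t\Vert u_\lambda\Vert_{H^1}^2$. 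Spectral calculus gives
\begin{equation*}
\Vert u_\lambda\Vert_{H^1}^2\leq\Vert G\Vert_{H^{-1}}^2,\qquad \lambda\E_\rho[u_\lambda^2]\leq \Vert G\Vert_{H^{-1}}^2.
\end{equation*}
The middle term is handled by Cauchy--Schwarz and stationarity:
\begin{equation*}
\E\Ll[\sup_s\Ll(\lambda\int_0^s u_\lambda\Rr)^2\Rr]\leq \lambda^2t^2\E_\rho[u_\lambda^2]\leq \lambda t^2\Vert G\Vert_{H^{-1}}^2.
\end{equation*}

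The main obstacle is the boundary terms $\sup_{s\le t}|u_\lambda(\eta_s)|^2$, because a pointwise supremum is not controlled by $\E_\rho[u_\lambda^2]$ directly. The standard remedy, which I will follow, is to exploit reversibility through time reversal. Relative to the fixed horizon $t$, the reversed process $\hat\eta_s=\eta_{t-s}$ has the same law and gives a backward martingale $\hat M^\lambda$. Adding the forward and backward decompositions, the terms $u_\lambda(\eta_s)$ cancel, yielding
\begin{equation*}
2\int_0^s G=M^\lambda_s-(\hat M^\lambda_{t}-\hat M^\lambda_{t-s})+\text{($\lambda$-terms)}+\bigl(u_\lambda(\eta_0)-u_\lambda(\eta_t)\bigr)\text{-type constants}.
\end{equation*}
The boundary contributions then appear only at the fixed times $0$ and $t$, so they cost $\E_\rho[u_\lambda^2]\leq \lambda^{-1}\Vert G\Vert_{H^{-1}}^2$ and no supremum is needed.

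To finish, I choose $\lambda=t^{-1}$, which balances the $\lambda t^2$ and $\lambda^{-1}$ contributions, both becoming $t\Vert G\Vert_{H^{-1}}^2$. Applying Doob's inequality to both the forward and backward martingales gives the bound $Ct\Vert G\Vert_{H^{-1}}^2$ with a universal $C$, independent of $N$, since only reversibility and stationarity were used. Alternatively one can simply cite \cite[Theorem~2.5]{komorowski2012fluctuations}, since $\L_N$ is reversible under $\Pr$ and the state space is finite.
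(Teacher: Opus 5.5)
Your argument is correct, and it is the standard Kipnis--Varadhan proof from \cite{komorowski2012fluctuations}: the resolvent $u_\lambda=(\lambda-\L_N)^{-1}G$, the forward--backward martingale decomposition from reversibility of $\Pr$, Doob's inequality, and the choice $\lambda=t^{-1}$. The paper invokes exactly this result by citation without giving a proof of its own. One cosmetic point: in the right forward-plus-backward combination the boundary terms cancel identically, namely $\int_0^s(-\L_N u_\lambda)(\eta_\kappa)\,\d\kappa=\frac12\bigl(M^\lambda_s+\hat M^\lambda_t-\hat M^\lambda_{t-s}\bigr)$, where $\hat M^\lambda$ is the Dynkin martingale of $u_\lambda$ along $\hat\eta_r=\eta_{t-r}$; so the fixed-time terms $u_\lambda(\eta_0)-u_\lambda(\eta_t)$ you carry are harmless but unnecessary.
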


	Here we state our result for the tightness of the sequence of drift terms.
	\begin{lemma}\label{lemma.tightness.A}
		For every test function  $f$, the sequence of drift terms
        \[
        \left\{\mathcal A_t^N(f), t\in [0,T]\right\}_{N \in \N}
        \]
        is tight in $D([0, T], \mathbb{R})$ and admits a limit in $C([0, T], \mathbb{R})$.
	\end{lemma}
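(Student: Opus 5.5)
We verify the two conditions of the C-tightness criterion, Proposition~\ref{Prop.Tightness.real-valued}. The starting point is to split the integrand $\mathcal L_N\mathcal Q^N_s(f)$ into a main part plus a remainder, using the replacement lemma for the drift postponed to the next section:
\[
\mathcal L_N\mathcal Q^N(f) = \mathrm{Tr}\left(\D(\rho)\, Q^N(\partial^{2,N}_{1,2}f)\right) + \mathcal R^N_{\mathcal A}(f).
\]
Here $\partial^{2,N}_{1,2}$ is the discrete second derivative in both variables. The remainder $\mathcal R^N_{\mathcal A}(f)$ collects three kinds of error. The first is the fluctuating flux $\mathbf g^z_{L,e_i,b}$ multiplied by linear fields $\bar\eta^N(y)$ with $y$ outside the box. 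The second is the near-diagonal terms with $y\in\Lambda_L^{z}$. The third is the slope-fixing terms, where $\nabla^N_1 f(x/N,\cdot)$ is replaced by $\nabla^N_1 f(z(x)/N,\cdot)$.

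The corrector $Z^N$ was designed precisely to produce this structure. When $\L_N$ acts on $Z^N$, the term $\L_N\phi_L^z$ combines with $N^2\sum c_b\pi_b\ell_{e_i}$ coming from $Q^N$. Summation by parts then yields $c_b\pi_b(\ell_{e_i}+\phi^z_{L,e_i}) = \pi_b\ell_{\D e_i}+\mathbf g^z_{L,e_i,b}$.

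\textbf{Main part.} We treat the two terms separately. For the main part, stationarity and the fact that $\E_\rho[Q^N(g)^2]\le C\|g\|_\infty^2$ uniformly in $N$ (a direct second-moment computation of the quadratic field) give a Lipschitz-type bound:
\[
\E_\rho\left[\sup_{|t-s|\le r}\left|\int_s^t \mathrm{Tr}\left(\D\, Q^N_\kappa(\partial^{2,N}_{1,2}f)\right)\d\kappa\right|^2\right]\le C_f r\, T.
\]
One obtains this by Cauchy--Schwarz in time, $\left(\int_s^t X_\kappa\,\d\kappa\right)^2\le (t-s)\int_0^T X_\kappa^2\,\d\kappa$, and taking expectation. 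This controls both the modulus $\omega$ and the tightness at fixed times.

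\textbf{Remainder.} For the remainder we apply the Kipnis--Varadhan lemma, Lemma~\ref{Kipnis--Varadhan}, on each time window, so we need $\|\mathcal R^N_{\mathcal A}(f)\|_{H^{-1}}\to0$.

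The main obstacle is the flux term. It is of order $N^2\cdot N^{-1-d}\sum\bar\eta(y)\,\mathbf g\,\nabla f$, which is not small in $L^2$ but only small in $H^{-1}$. To bound it, we use the variational formula
\[
\|G\|_{H^{-1}}=\sup_H\left\{2\E_\rho[GH]-\E_\rho[H(-\L_N)H]\right\}.
\]
We write $\mathbf g^z_{L,e_i,b}$ as $\pi_b$ applied against $H$ and invoke Lemma~\ref{lem.Homogenization}\eqref{lem.FluxReplacement} with $G=H\cdot\sum_y\bar\eta(y)\nabla f$ (the $y$ factor commutes with $\pi_b$ up to boundary terms since $y\notin\Lambda_L^z$). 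This gives a gain of $L^{-\alpha}$ times the Dirichlet form $N^{-2}\E_\rho[H(-\L_N)H]$ times an $L^2$ factor of order $1$ coming from the linear field. Optimizing over $H$ yields $\|\cdot\|^2_{H^{-1}}\le C_fL^{-2\alpha}$. Boundary terms and near-diagonal terms are handled by crude $L^2$ bounds, which give powers of $N^{-1}L^{d+2}$ and $\|\phi_L\|_\infty$. Crude $L^2$ bounds work here because $\|G\|_{H^{-1}}^2\le C N^{-2}\|G\|^2_{L^2}\cdot(\text{spectral gap in the appropriate sense})$ fails in general, so for these terms we instead bound $\int_s^t$ directly by $r\sup$ in $L^2$, and they vanish under Hypothesis~\ref{hyp}. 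Slope terms carry an extra $L/N$ and are treated the same way as the flux term.

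\textbf{Conclusion.} Combining the two estimates, Chebyshev's inequality on a grid of mesh $r$ (as in the proof of Proposition~\ref{prop.tightness.quadratic.M}, using the $\sup$ inside the Kipnis--Varadhan bound) gives
\[
\P\left[\omega(\mathcal A^N(f),r)\ge\varepsilon\right]\le \varepsilon^{-2}\left(C_f rT + C_f T L^{-2\alpha}/r + o(1)\right),
\]
so $\limsup_N$ is $\le C_f rT\varepsilon^{-2}$, and taking $r\to0$ verifies condition (ii) together with continuity of the limit. Condition (i) follows from the same $L^2$ bounds at fixed $t$.
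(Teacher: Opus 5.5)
Your skeleton matches the paper's. You split off $Q^N(\Delta^N_{\D}f)$ and control it by Cauchy--Schwarz in time together with $\E_\rho[Q^N(g)^2]\le C\|g\|_\infty^2$. You control the rest by Kipnis--Varadhan on a grid of mesh $r$, which needs $\|\L_N\mathcal Q^N(f)-Q^N(\Delta^N_{\D}f)\|_{H^{-1}}\to0$. Had you simply quoted Proposition~\ref{prop.replacement} for this bound, your proof would coincide with the paper's.

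\textbf{The gap.} It lies in your own sketch of that bound: the claim that the near-diagonal and boundary pieces can be bounded crudely in $L^2$ and vanish under Hypothesis~\ref{hyp}. They do not. The rates $c_b$ destroy summation by parts, so the squared $L^2$ norm of a piece $N^2\sum_b c_bR_b$ of the drift is typically $N^2$ times larger than $N^2\sum_b\E_\rho[R_b^2]$. For instance, the near-diagonal piece
\begin{equation*}
2N^{1-d}\sum_{i=1}^d\sum_x c_{x,x+e_i}\bigl(\bar\eta^N(x)-\bar\eta^N(x+e_i)\bigr)\sum_{y\in\Lambda_L^{z(x)}\setminus\{x,x+e_i\}}\bar\eta^N(y)\,\nabla^N_{1,i}f\Bigl(\frac{x}{N},\frac{y}{N}\Bigr)
\end{equation*}
is $2N^{1-d}$ times a sum of $(N/L)^d$ essentially independent, centred block sums, each of variance of order $L^{2d}$. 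Its second moment is therefore of order $N^{2-d}L^{d}$, which diverges for $d\le 2$. Already for $c\equiv1$ it is of order $N^{2-d}L^{d-1}$, because cutting at the block edges leaves boundary terms that only cancel against the far-field part.

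Likewise, the piece where $\pi_b$ hits the linear field inside $Z^N$ contains, up to a constant factor, the non-gradient sum $N^{-d}\sum_z\phi_L^z\cdot\sum_b c_b\bigl(\eta^N(x+e_i)-\eta^N(x)\bigr)\nabla^N_{2,i}\nabla^N_1 f(z/N,x/N)$, over all bonds $b=\{x,x+e_i\}$ away from $\Lambda_L^z$. Its second moment does not vanish in general, and the estimates of Lemma~\ref{lem.Homogenization} only bound it by $CL^2$. The small quantities you quote, such as $N^{-1}L^{d+2}$, are the sizes of $N^2\sum_b\E_\rho[R_b^2]$. Those control an $H^{-1}$ norm, not an $L^2$ norm.

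\textbf{The fix.} Contrary to your remark, no spectral gap is needed. Pair first with a test function,
\begin{equation*}
\E_\rho[\L_N\mathcal Q^N(f)\,G]=-\tfrac12N^2\sum_b\E_\rho[c_b\,\pi_b\mathcal Q^N(f)\,\pi_bG],
\end{equation*}
and only then split $\pi_b\mathcal Q^N(f)$ into flux, near-diagonal, boundary and slope parts $R_b$. By Cauchy--Schwarz against the Dirichlet form, each non-flux part contributes at most $C\bigl(N^2\sum_b\E_\rho[R_b^2]\bigr)^{1/2}\|G\|_{H^1}$. Lemmas~\ref{lemma.R_1.expectation} and~\ref{lemma.minus.expectation} make this $O(N^{-\gamma})$; this is exactly Steps~1.2, 1.3 and~2 in the proof of Proposition~\ref{prop.replacement}.

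\textbf{A smaller point in your flux step.} Set $W_z=\sum_{y\notin\Lambda_L^z}\bar\eta^N(y)\nabla^N_{1,i}f(z/N,y/N)$. Applying Lemma~\ref{lem.Homogenization}\eqref{lem.FluxReplacement} directly to $H\,W_z$ produces $\E_\rho[(\pi_bH)^2W_z^2]$, which does not factor into the Dirichlet form times an order-one $L^2$ factor. You first have to discard the sites $y\in\Lambda^z_{2L}$, so that the linear field is independent of $\mathbf g^z_{L,e_i,b}$ (which depends on $\Lambda^z_{L+2\mathbf{r}+2}$). Then condition on $\F_{\Lambda^z_{2L}}$, as the paper does with $V^z_{e_i}$.
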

	\begin{proof}
		By Lemma~\ref{Kipnis--Varadhan}, we have
		\begin{align*}
			\Er\Ll[\mathcal A^N_t(f)^2\Rr] = \E_{\rho}\left[\left\vert\int_0^t \mathcal{L}_{N}\mathcal{Q}_s^N(f)\,\d s\right\vert^2\right]
			\leq Ct\left\Vert \mathcal{L}_{N}\mathcal{Q}_0^N(f)\right\Vert_{H^{-1}}^2,
		\end{align*}
		and we can use Proposition~\ref{prop.B.bound.L1.L2} to conclude its bound
		\begin{align*}
			\left\Vert \mathcal{L}_{N}\mathcal{Q}_0^N(f)\right\Vert_{H^{-1}}^2&=\E_{\rho}\left[\left(-\mathcal L_N \mathcal{Q}_0^N(f)\right) \left(-\mathcal L_N \right)^{-1}\left(-\mathcal L_N \mathcal{Q}_0^N(f)\right)\right] \\
			&=\E_{\rho}\left[\mathcal{Q}_0^N(f)\left(-\mathcal L_N\right) \mathcal{Q}_0^N(f)\right] \\
			&=\E_\rho\left[\mathcal B_0^N(f)\right]\leq C_{f}.
		\end{align*}
		Thus $\limsup_{N}\Er\Ll[(\mathcal A^N_t(f))^2\Rr]$ is bounded, which justifies the first condition in Proposition~\ref{Prop.Tightness.real-valued}.

		To verify the second condition in Proposition~\ref{Prop.Tightness.real-valued}, we apply Chebyshev’s inequality.
		\begin{equation}\label{ineq.drift.Chebyshev}
			\P_{\rho}\left[\omega\left(\mathcal A^N(f),r\right)\geq \varepsilon\right]\leq \frac{1}{\varepsilon^2}\E_{\rho}\left[\omega\left(\mathcal A^N(f),r\right)^2\right],
		\end{equation} 
		where
		\begin{align}\label{ineq.drift.Cauchy-Scharz}
			&\omega\left(\mathcal A^N(f),r\right)^2 \notag\\
			&=\sup_{s,t\in[0,T], \vert t-s\vert\leq r} \left\vert\int_s^t \mathcal{L}_{N}\mathcal{Q}_{\kappa}^N(f)\,\d\kappa\right\vert^2\\
			&\leq 2\sup_{s,t\in[0,T], \vert t-s\vert\leq r}\left(\left\vert\int_s^t \mathcal{L}_{N}\mathcal{Q}_{\kappa}^N(f)-Q_\kappa^N\left(\Delta_\mathbf{D}^Nf\right)\,\d\kappa\right\vert^2+\left\vert\int_s^t Q_\kappa^N\left(\Delta_\mathbf{D}^Nf\right)\,\d\kappa\right\vert^2\notag\right).
		\end{align}
		The part $Q^N\left(\Delta_\mathbf{D}^Nf\right)$ is defined as  
		\begin{equation*}
			Q^N\left(\Delta_\mathbf{D}^Nf\right):=\sum_{i,j=1}\mathbf D_{ij}(\rho) Q^N\left(\nabla_{1,j}^N\nabla_{1,i}^Nf+\nabla_{2,j}^N\nabla_{2,i}^Nf\right).
		\end{equation*}
		We split the modulus into two parts. The first term can be estimated by the inequality of Kipnis--Varadhan in Lemma~\ref{Kipnis--Varadhan},
		\begin{align}\label{eq.drift.tightness.1}
			&\E_\rho\left[\sup_{s,t\in[0,T], \vert t-s\vert\leq r}\left\vert\int_s^t \mathcal{L}_{N}\mathcal{Q}_{\kappa}^N(f)-Q_\kappa^N\left(\Delta_\mathbf{D}^Nf\right)\,\d\kappa\right\vert^2\right]\\
			&\leq C \sum_{s\in\{0,r,\cdots,\lfloor \frac{T}{r} \rfloor r\}}\E_\rho\left[\sup_{\substack{t\in[0,T], t-s\leq r \\}}\left\vert\int_s^t \mathcal{L}_{N}\mathcal{Q}_{\kappa}^N(f)-Q_\kappa^N\left(\Delta_\mathbf{D}^N f\right)\,\d\kappa\right\vert^2\right]\notag\\
            &\leq C \Ll(\frac{T}{r}\Rr) r \norm{ \mathcal{L}_{N}\mathcal{Q}_{0}^N(f)-Q_0^N\left(\Delta_\mathbf{D}^N f\right) }_{H^{-1}}\notag\\
			&\leq C_f \Ll(\frac{T}{r}\Rr)CrN^{-\gamma} \leq C_fTN^{-\gamma}\notag.
		\end{align}
        A replacement argument is needed in the passage from the third line to the forth line, which will be proved in Proposition~\ref{prop.replacement} of the next section.

		The second term of \eqref{ineq.drift.Cauchy-Scharz} is estimated directly using \(L^2\)-bounds:
		\begin{align}\label{eq.drift.tightness.2}
			&\E_\rho\left[\sup_{s,t\in[0,T], \vert t-s\vert\leq r}\left\vert\int_s^t Q_\kappa^N\left(\Delta_\mathbf{D}^Nf\right)\,\d\kappa\right\vert^2\right]\\
			&\leq \E_\rho\left[\sup_{s,t\in[0,T], \vert t-s\vert\leq r}r\int_s^t \left\vert Q_\kappa^N\left(\Delta_\mathbf{D}^Nf\right)\right\vert^2\,\d\kappa\right] \notag\\
			&\leq r\E_\rho\left[\int_0^T \left\vert Q_\kappa^N\left(\Delta_\mathbf{D}^Nf\right)\right\vert^2\,\d\kappa\right]\notag\\
			&=rT\E_\rho\left[\left\vert Q_\kappa^N\left(\Delta_\mathbf{D}^Nf\right)\right\vert^2\right]\notag\\
			&\leq C_{f}rT\notag.
		\end{align}
		Here from the first line to the second line, we use Cauchy--Schwarz inequality. Combining \eqref{ineq.drift.Chebyshev}-\eqref{eq.drift.tightness.2}, we have
		\begin{align*}
			\P_{\rho}\left[\omega\left(\mathcal A^N(f),r\right)\geq \varepsilon\right]
			&\leq \frac{1}{\varepsilon^2}\E_{\rho}\left[\omega\left(\mathcal A^N(f),r\right)^2\right]\notag\\
			&\leq \frac{C_{f}}{\varepsilon^2}\left(T N^{-\gamma}+rT\right),
		\end{align*}
		which completes the proof as 
		\begin{align*}
			\inf_{r>0}\limsup_{N\to\infty}\P\left[\omega\left(\mathcal A^N,r\right)\geq \varepsilon\right] \leq \inf_{r>0}\limsup_{N\to\infty}\frac{C_{f}}{\varepsilon^2}\left(T N^{-\gamma}+rT\right)=0.
		\end{align*}
	\end{proof}

	After showing the tightness for $\left\{\mathcal A_t^N(f), t\in [0,T]\right\}_{N \in \N}$, we want to prove that the corrected process $\left\{Z_t^N(f), t\in [0,T]\right\}_{N \in \N}$ is tight and tends to zero process as $N\to\infty$. We state it as the following lemma:
    \begin{lemma}\label{lemma.Z.convergence}
        For every test function  $f$, the sequence of corrected process
        \[
        \left\{Z_t^N(f), t\in [0,T]\right\}_{N \in \N}
        \]
        is tight in $D([0, T], \mathbb{R})$ and the limit is zero process.
    \end{lemma}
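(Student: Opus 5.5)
The goal is to show that $\sup_{t\in[0,T]}|Z^N_t(f)|\to 0$ in probability (in fact in $L^2$). This gives both tightness in $D([0,T],\R)$ and identifies the limit as the zero process, which is continuous. Lemma~\ref{lemma.Z} only controls a fixed time $t$, so the main task is to upgrade it to a bound uniform in time. The natural tool is Proposition~\ref{Prop.Tightness.real-valued}: condition (i) follows from Lemma~\ref{lemma.Z}, since $\E_\rho[Z^N_t(f)^2]\le C_f N^{-2}\|\phi_L\|_\infty^2\to 0$ by Lemma~\ref{lem.Homogenization}(2) and Hypothesis~\ref{hyp}. Once the modulus condition (ii) holds, any limit point is continuous and has all one-dimensional marginals equal to $0$, hence is the zero process.

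For the modulus I would not estimate increments of $Z^N$ directly. Instead I would use the decomposition already established: from \eqref{eq.correction} and \eqref{eq.Dynkin},
\[
Z^N_t(f)=Z^N_0(f)+\mathcal A^N_t(f)+\M^N_t(f)-\big(Q^N_t(f)-Q^N_0(f)\big).
\]
However, this only transfers the problem to $Q^N$, whose tightness is not yet known, so it is circular.

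A cleaner route is a direct Dynkin decomposition of $Z^N(f)$ itself: $Z^N_t=Z^N_0+\int_0^t\L_N Z^N_s\,\d s+M^{Z,N}_t$. I would control the two pieces as follows.

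\textbf{Martingale part.} By Doob's inequality,
\[
\E_\rho\Big[\sup_{t\le T}|M^{Z,N}_t|^2\Big]\le 4T\,\E_\rho\Big[N^2\sum_b c_b(\pi_bZ^N)^2\Big].
\]
A jump across $b$ changes $Z^N$ by $N^{-1-d}$ times a sum of $O(1)$ terms, namely one change in $\bar\eta(y)$ or one change in $\phi_L^{z}$. Counting as in Lemma~\ref{lemma.I_1.expectation}, with $\pi_b\phi^z_L$ controlled in $L^2$ by Lemma~\ref{lem.Homogenization}(3), this gives a bound of order $N^{-d}\cdot N^{d}\cdot$ a polynomial in $L$. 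Hence the martingale contribution is $O(L^{c})$ and is not obviously small. This is where I expect trouble.

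\textbf{Drift part.} By Kipnis--Varadhan (Lemma~\ref{Kipnis--Varadhan}),
\[
\E_\rho\Big[\sup_t\Big|\int_0^t\L_NZ^N\Big|^2\Big]\le CT\,\E_\rho\big[Z^N(-\L_N)Z^N\big],
\]
which is exactly the same carr\'e du champ. So both pieces reduce to bounding $\E_\rho[N^2\sum_b c_b(\pi_bZ^N)^2]$.

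The main obstacle is therefore showing that this Dirichlet form of $Z^N$ is actually $o(1)$. The estimate must exploit the prefactor $N^{-1-d}$ more carefully. When $b$ is inside $\Lambda_L^z$ and only $\phi_L^z$ changes, we get
\[
\pi_bZ^N=2N^{-1-d}\,\pi_b\phi_L^z\cdot\sum_{y\notin\Lambda^z_L}\bar\eta(y)\nabla_1^Nf,
\]
whose second moment is $N^{-2-2d}\,\E[(\pi_b\phi^z_L)^2]\,N^d$. Summing over $b$ with weight $N^2$ and using Lemma~\ref{lem.Homogenization}(3) gives
\[
N^{2}\cdot N^{-2-d}\cdot (N/L)^d\, CL^d=O(1).
\]
This is bounded but not small, and indeed this term is precisely the corrector contribution inside $\mathcal I_1^N$ in the martingale. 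So the time-uniform smallness cannot come from this route.

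I would therefore fall back to a chaining argument on a time grid. Take a mesh $N^{-\beta}$, and use a union bound with the fourth moment of $Z^N_t$ at each grid point. By stationarity, Wick pairing and $\|\phi_L\|_\infty$, this fourth moment is of order $N^{-4}\|\phi_L\|_\infty^4$, which is polynomially small by Hypothesis~\ref{hyp}, so it beats the $N^{\beta}$ grid points. Between grid points, I would bound the oscillation by the number of jumps, which is Poisson with rate $\lesssim N^{2+d}$, times the maximal single-jump size $\lesssim N^{-1-d}\cdot N^d\|\phi_L\|_\infty$. Balancing $\beta$ against these bounds is the step I expect to be delicate, and it may require a crude bound $|\sum_y\bar\eta(y)\nabla f|\le N^{d/2+\epsilon}$ holding with high probability.
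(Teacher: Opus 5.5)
\textbf{Gap: condition (ii) is never established, and the obstacle is misdiagnosed.} The missing step is condition (ii) of Proposition~\ref{Prop.Tightness.real-valued}. You do not need the Dirichlet form of $Z^N$ to be $o(1)$. Write $\Gamma(F):=N^2\sum_b c_b(\pi_bF)^2$. For the martingale part, an $O(1)$ bound on $\Gamma(Z^N)$ in $L^1$ and $L^2$ is enough. Via Doob/BDG and the grid argument of Proposition~\ref{prop.tightness.quadratic.M}, it gives $\P_\rho[\omega(M^{Z,N},r)\ge\varepsilon]\le C_fTr/\varepsilon^4$, which is all (ii) asks for. The limit is then identified by (i), as you say.

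The only real obstruction is the drift. Kipnis--Varadhan is an $L^2$ bound, so on a grid of mesh $r$ it yields $(T/r)\cdot Cr\,\E_\rho[Z^N(-\L_N)Z^N]=O(T)$, which does not vanish as $r\downarrow0$.

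The missing idea is the Lyons--Zheng forward--backward decomposition, which is what the paper uses. Since $\P_\rho$ is reversible, the reversed process $\hat\eta^N_t:=\eta^N_{T-t}$ is again Markov with generator $\L_N$. Combining the forward and backward Dynkin formulas cancels the drift completely:
\[
Z^N_t(f)=Z^N_0(f)+\tfrac12 M^{Z,N}_t-\tfrac12\bigl(\hat M^{Z,N}_T-\hat M^{Z,N}_{T-t}\bigr)
\]
(up to the sign convention for $\hat M^{Z,N}$). Both martingales have carr\'e du champ $\Gamma(Z^N)$. This is bounded in $L^1$ and $L^2$, e.g.\ via $\Gamma(Z^N)\le 2\Gamma(\mathcal Q^N)+2\Gamma(Q^N)$ and Proposition~\ref{prop.B.bound.L1.L2}; your own computation already gives the $L^1$ part. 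So (ii) follows exactly as it does for $\M^N$.

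\textbf{The chaining fallback does not close with the ingredients you list.} Take mesh $N^{-\beta}$.
\begin{itemize}
\item A grid interval contains $\asymp N^{2+d-\beta}$ jumps, and the trivial jump size is $N^{-1}\|\phi_L\|_\infty$. So the oscillation between grid points is $\lesssim N^{1+d-\beta}\|\phi_L\|_\infty$, which forces $\beta>1+d$.
\item The union bound over $N^{\beta}$ grid points, with a fourth moment of order $N^{-4}\|\phi_L\|_\infty^4$, forces $\beta<4$.
\end{itemize}
These two constraints are incompatible for $d\ge 3$.

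Your proposed high-probability bound on $\sum_y\bar\eta(y)\nabla_1^Nf$ does not rescue this. It would itself have to hold uniformly over $t\in[0,T]$ and over all blocks, which is another time-uniform estimate. It also does not touch the jumps coming from the terms attached to the other $(N/L)^d$ blocks. Those are of size up to $N^{-2}L^{-d}\|\phi_L\|_\infty$ and still force $\beta>d$.

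Chaining in general dimension would need $\E_\rho[|Z^N_t(f)|^{2p}]\lesssim (N^{-1}\|\phi_L\|_\infty)^{2p}$ with $2p$ exceeding roughly $d+1$. That is a nontrivial moment bound for a non-Gaussian quadratic form in the block variables, and you neither state nor prove it. Even the fourth-moment bound is only asserted, since Lemma~\ref{lemma.Z} controls only the second moment. The Lyons--Zheng route avoids all of this and needs nothing beyond second moments of $\Gamma(Z^N)$.
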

	\begin{proof}
	    Lemma~\ref{lemma.Z} gives that for every $t>0$,
        \[
        Z^N_t(f) \xrightarrow[N \to \infty]{L^2} 0.
        \]
        Therefore, by Proposition~\ref{Prop.Tightness.real-valued}, we only need to show that for all $\varepsilon>0$,  
			\begin{equation*}   
				\inf_{r>0}\limsup_{N\to\infty}\P\left[\omega\left(Z^N(f),r\right)\geq \varepsilon\right]=0.
			\end{equation*}
        By Dynkin's formula, the decomposition for $Z^N(f)$ is as follows:
        \begin{equation}\label{eq.Z.Dynkin.1}
            Z^N_t(f)=Z^N_0(f)+\int_0^t \L_N Z_s^N(f)\,\d s+M_t^N(f),
        \end{equation}
        where $M^N \equiv (M^N_t)_{t \geq 0}$ is a martingale term. Following the strategy in \cite{LyonsZheng1988}, we define a backward process for fixed $T>0$:
        \[
        \hat{\eta}^N_t := \eta^N_{T-t},\qquad t\in [0,T].
        \]
        Since the process is reversible, the generator for $\hat\eta_t^N$ is still $\L_N$. Then by Dynkin's formula, we have 
        \begin{equation}\label{eq.Z.Dynkin.2}
            {Z}^N_t(f)={Z}^N_T(f)+\int_0^{T-t} \L_N {Z}_{T-s}^N(f)\,\d s+\hat{M}_{T-t}^N(f),
    \end{equation}
    where $\hat{M}^N \equiv (\hat{M}^N_t)_{t \geq 0}$ is the martingale term for the backward process. Therefore, we have the following decomposition by adding \eqref{eq.Z.Dynkin.1} and \eqref{eq.Z.Dynkin.2}:
    \begin{equation*}
        2Z^N_t(f)=Z^N_0(f)+{Z}^N_T(f)+\int_0^T \L_N Z_s^N(f)\,\d s+M_t^N(f)-\hat{M}_{T-t}^N(f).
    \end{equation*}
    Then we have the following decomposition
    \begin{equation*}
        Z^N_t(f)=Z^N_0(f)+\frac{1}{2}{M}_t^N(f)+\frac{1}{2}(\hat{M}_T^N(f)-\hat{M}_{T-t}^N(f)).
    \end{equation*}
    Therefore, the modulus for $Z_t^N(f)$ reduces to the modulus for $M_t^N(f)$ and $\hat{M}_t^N(f)$. Here we have an observation that by Proposition~\ref{prop.tightness.quadratic.M}, \emph{the carr\'e du champ operator} for the corrected process $\mathcal Q_t^N(f)$ is uniformly bounded in $L^1$ and $L^2$ in $N\in \mathbb N$. Combining with the fact that \emph{the carr\'e du champ operator} for the original process $Q_t^N(f)$ is uniformly bounded in $L^1$ and $L^2$ in $N\in \mathbb N$, we have the conclusion that \emph{the carr\'e du champ operator} for the corrected process $Z_t^N(f)$ is also uniformly bounded in $L^1$ and $L^2$ in $N\in \mathbb N$. Then following the same procedure in Proposition~\ref{prop.tightness.quadratic.M}, the modulus for $M_t^N(f)$ and $\hat{M}_t^N(f)$ will decay to $0$ as the interval $r\downarrow 0$, which finishes the proof. 
	\end{proof}

	\subsection{Characterization of limit}\label{sec.A.limit}
	In this section, we give the characterization for the limit of the drift term. A key input here is the replacement argument.
	\begin{proposition}[Replacement]\label{prop.replacement}
		There exists $\gamma>0$ such that for every test function $f$ and every $t > 0$
		\begin{equation*}
			\left\Vert \mathcal{L}_{N}\mathcal{Q}_{t}^N(f) -Q_t^N\left(\Delta_\mathbf{D}^Nf\right)\right\Vert_{H^{-1}}\leq C_f N^{-\gamma}.
		\end{equation*}
	\end{proposition}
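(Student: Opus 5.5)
We will estimate the $H^{-1}$ norm through its variational formula
\[
\norm{G}_{H^{-1}}=\sup_{H}\frac{\E_\rho[GH]}{\norm{H}_{H^1}},\qquad \norm{H}_{H^1}^2=\frac{N^2}{2}\sum_{b}\E_\rho\left[c_b(\pi_bH)^2\right],
\]
with $G=\L_N\mathcal Q^N(f)-Q^N(\Delta^N_{\mathbf D}f)$. By stationarity we may set $t=0$. Reversibility turns the first piece of the pairing into
\[
\E_\rho\left[H\,\L_N\mathcal Q^N(f)\right]=-N^2\sum_{i,x}\E_\rho\left[c_{x,x+e_i}\,\pi_{x,x+e_i}\mathcal Q^N(f)\,\pi_{x,x+e_i}H\right].
\]
We insert the decomposition \eqref{eq.pi.Q}, $\pi_{x,x+e_i}\mathcal Q^N=\mathcal I^N_1+\mathcal R^N_1$. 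The remainder is handled by Cauchy--Schwarz together with Lemma~\ref{lemma.R_1.expectation}: its contribution is at most $C_f(N^{-2}L^2+N^{-d}L^{d+2})^{1/2}\norm{H}_{H^1}$. Hypothesis~\ref{hyp} makes this a negative power of $N$.

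The main term then reads
\[
-2N^{1-d}\sum_{z}\sum_{j}\sum_{x\in\Lambda_L^z,i}\E_\rho\left[c\,\pi_{x,x+e_i}(\ell_{e_j}+\phi^z_{L,e_j})\,W_j^z\,\pi_{x,x+e_i}H\right],
\]
where $W^z_j=\sum_{y\notin\Lambda^z_L}\bar\eta(y)\nabla^N_{1,j}f(z/N,y/N)$. This $W^z_j$ is $\F$-measurable outside $\Lambda_L^z$, so it commutes with $\pi_b$ for bonds inside the box. For boundary bonds (the enlarged set $\overline{(\Lambda_L^z)^*}$) it does not commute, and the error is charged to an $O(L^{d-1})$-per-box boundary term. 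We then write $W\pi_bH=\pi_b(WH)$ and apply the flux replacement, Lemma~\ref{lem.Homogenization}(\ref{lem.FluxReplacement}), with $G=W^z_jH$. This replaces $c_b\pi_b(\ell_{e_j}+\phi)$ by $\pi_b\ell_{\mathbf D e_j}$ at a cost of
\[
CL^{-\alpha}L^{d/2}\Big(\sum_b\E_\rho\left[(\pi_b(W H))^2\right]\Big)^{1/2}.
\]
We would bound $\E_\rho[(\pi_b(WH))^2]$ by splitting it into $\E_\rho[W^2(\pi_bH)^2]$ and the part where $\pi_b$ hits $W$. The first piece causes trouble because $W$ is not bounded; we would use the fourth-moment and independence structure from the Wick-diagram computation in Proposition~\ref{prop.B.bound.L1.L2}, or truncate $W$ at level $N^{\delta}$ using its Gaussian-type tails, as $|W|\lesssim N^{d/2}$ typically. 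Summing over $z$ with the prefactor $N^{1-d}\cdot N^{d/2}$ and using Cauchy--Schwarz over $z$ should produce $C_fL^{-\alpha}\norm{H}_{H^1}$ up to logarithmic factors. Since $L$ is a power of $N$, this is the required $N^{-\gamma}$.

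After the replacement we are left with
\[
-2N^{1-d}\sum_{z}\sum_{b}\E_\rho\left[\pi_b\ell_{\mathbf De_j}\,W^z_j\,\pi_bH\right].
\]
By reversibility this equals the pairing of $H$ with the generator of the exclusion process with diffusion matrix $\mathbf D$, applied to the gradient quadratic functional. The exclusion drift is explicit: summing by parts twice (a gradient computation as in \cite{GJ19}) yields $Q^N(\Delta^N_{\mathbf D}f)$. The errors come from three sources: moving the test function from $z$ back to $x$, which costs $L/N$; the near-diagonal terms $y\in\Lambda_L^z$, handled like $\mathcal R^N_{1,2}$ and $\mathcal R^N_{1,3}$; and the diagonal $y=x$ contributions, which vanish by symmetry of $f$ or are $O(N^{-d/2})$ in $H^{-1}$. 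Each is bounded by $C_f N^{-\gamma}\norm{H}_{H^1}$.

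The main obstacle is the flux-replacement step. Lemma~\ref{lem.Homogenization}(\ref{lem.FluxReplacement}) is stated for a single box and for bounded-type $G$, whereas here $G=W^z_jH$ has a random, unbounded, nonlocal weight, and the per-box errors must be summed in a way that loses only $L^{-\alpha}$ against $\norm{H}_{H^1}$. I would first try to condition on $\F_{(\Lambda_L^z)^c}$ and apply the lemma conditionally, since $W^z_j$ is constant there and the measure is product. This gives a bound by $|W^z_j|$ times the local Dirichlet form of $H$. Taking expectation and using Cauchy--Schwarz across boxes, one then needs a bound on $\E_\rho[(W^z_j)^2\,\cdot\,\text{local energy}]$. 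This is the delicate point, and I would try to control it by a variance decoupling using $\E_\rho[(W^z_j)^2]\le C_f N^{d}$.
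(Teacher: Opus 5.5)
\paragraph{Gap at the flux-replacement step.} Your outline has the same architecture as the paper's proof: pair with a test function, use reversibility, remove $\mathcal R_1^N$ with Lemma~\ref{lemma.R_1.expectation}, apply Lemma~\ref{lem.Homogenization}(\ref{lem.FluxReplacement}) box by box, and compute the constant-coefficient term explicitly. But the step you flag as delicate is the whole content of the proposition, and you leave it open.

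\emph{Why your routes do not close.} Taking $G=W^z_jH$ in the flux lemma is legitimate. You are then left needing $\sum_b\E_\rho[(W^z_j)^2(\pi_bH)^2]\lesssim N^d\sum_b\E_\rho[(\pi_bH)^2]$, and this is not available for a general $H\in H^1$. For example, $H=\mathbf 1_{\{|W^z_j|>t\}}h$ with $h$ local in the box and $t\gg N^{d/2}$ puts all its local energy where the weight is atypically large. The Wick-diagram bounds of Proposition~\ref{prop.B.bound.L1.L2} use the explicit polynomial form of $\mathcal Q^N$ and say nothing about an arbitrary $H$. Truncation can be rescued, but only if the tail $W\mathbf 1_{\{|W|>t\}}$ is bounded crudely, through $\norm{\mathbf g}_\infty\lesssim L^{d+2}\log L$ and a Hoeffding tail, rather than through the flux lemma; you do not say this.

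\emph{Support problem.} With $y\notin\Lambda_L^z$, your weight is moved by the boundary bonds of $\overline{(\Lambda_L^z)^*}$. It is also not independent of $\mathbf g^z_{L,e_j,b}$, which depends on sites outside $\Lambda_L^z$ through those bonds and the range $\mathbf r$ of $c_b$. Commuting $\pi_b$ past $W$ produces $(\pi_bW)\,H(\eta^b)$, which involves $H$ itself and cannot be charged to $\norm{H}_{H^1}$. So a buffer must be removed from the integrand before pairing. The paper uses $y\notin\Lambda^z_{2L}$ in $\mathcal I_3^N,\mathcal J_2^N$ and bounds the discarded annulus by Cauchy--Schwarz against $\pi_bH$ (Lemma~\ref{lemma.minus.expectation}).

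\paragraph{The missing idea: decoupling by projection.} Set $W:=\sum_{y\notin\Lambda^z_{2L}}\bar\eta(y)\nabla^N_{1,j}f(z/N,y/N)$. Then $W$ is invariant under every $\pi_b$ with $b\in\overline{(\Lambda_L^z)^*}$, and it is independent of $\mathcal F_{\Lambda^z_{2L}}$, while $\mathbf g^z_{L,e_j,b}$ is $\mathcal F_{\Lambda^z_{2L}}$-measurable. Hence $\E_\rho[\mathbf g_b\,W\pi_bH]=\E_\rho[\mathbf g_b\,\pi_bV]$ with $V:=\E_\rho[WH\mid\mathcal F_{\Lambda^z_{2L}}]$, and the flux lemma is applied to $V$, not to $WH$. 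Conditional Cauchy--Schwarz plus independence give $\E_\rho[(\pi_bV)^2]\le\E_\rho[W^2]\,\E_\rho[(\pi_bH)^2]\le C_fN^d\,\E_\rho[(\pi_bH)^2]$. So the weight enters only through its variance, and Cauchy--Schwarz over $z$ yields $N^{1-d}\cdot L^{d/2-\alpha}\cdot N^{d/2}\cdot(N/L)^{d/2}\cdot N^{-1}\norm{H}_{H^1}=L^{-\alpha}\norm{H}_{H^1}$, up to constants.

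Your conditional variant is the same argument seen from the other side, and it also closes once you make two changes. First, condition on $\mathcal F_{(\Lambda^z_{2L})^c}$ instead of $\mathcal F_{(\Lambda^z_L)^c}$. Then all bonds and the support of $\mathbf g$ lie in the region where the conditional law is product, while $W$ is frozen. Second, apply Cauchy--Schwarz to the outer expectation: $\E_\rho[|W|X]\le\E_\rho[W^2]^{1/2}\E_\rho[X^2]^{1/2}$ with $X^2=\sum_b\E_\rho[(\pi_bH)^2\mid\mathcal F_{(\Lambda^z_{2L})^c}]$. No bound on $\E_\rho[W^2\cdot\text{energy}]$ is ever needed.
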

	\begin{proof}
		Since the norm is calculated under $\Pr$ which is stationary, we omit the index of time and write directly the term as $\mathcal{L}_{N}\mathcal{Q}^N(f), Q^N\left(\Delta_\mathbf{D}^Nf\right)$ in the following calculation. The proof can be divided into $3$ steps.

		\textbf{Step 0:} Decomposition.
		
		We test the term $\mathcal{L}_{N}\mathcal{Q}^N(f)-Q^N\left(\Delta_\mathbf{D}^Nf\right)$ with arbitrary function $G\left(\eta^N\right) \in H^1$:
		\begin{align*}
			&\quad \E_\rho\left[\left(\mathcal{L}_{N}\mathcal{Q}^N(f)-Q^N\left(\Delta_\mathbf{D}^Nf\right)\right)G\right]\\
			&=\E_\rho\left[N^2\sum_x\sum_{i=1}^dc_{x,x+e_i}\pi_{x,x+e_i}\mathcal Q^N(f)G\right]-\E_\rho\left[Q^N\left(\Delta_\mathbf{D}^Nf\right)G\right]\notag\\
			&=-\frac{1}{2}\E_\rho\left[N^2\sum_x\sum_{i=1}^dc_{x,x+e_i}\pi_{x,x+e_i}\mathcal Q^N(f)\pi_{x,x+e_i}G\right]-\E_\rho\left[Q^N\left(\Delta_\mathbf{D}^Nf\right)G\right]\notag\\
			&=-\frac{1}{2}\underbrace{\E_\rho\left[N^2\sum_x\sum_{i=1}^d\left(c_{x,x+e_i}\pi_{x,x+e_i}\mathcal Q^N(f)-\mathcal J_1^N(f,x,i)\right)\pi_{x,x+e_i}G\right]}_{\mathbf{I}}\notag\\
			&\quad\underbrace{-\frac{1}{2}\E_\rho\left[N^2\sum_x\sum_{i=1}^d\mathcal J_1^N(f,x,i)\pi_{x,x+e_i}G\right]-\E_\rho\left[Q^N\left(\Delta_\mathbf{D}^Nf\right)G\right]}_{\mathbf{II}}\notag,
		\end{align*}
		where 
		\begin{equation*}
			\mathcal J_1^N(f,x,i)=2N^{-1-d}\sum_{j=1}^d\sum_{\substack{y:y\neq x,x+e_i}}\mathbf D_{ij}(\rho)\left(\bar\eta^N(x)-\bar\eta^N(x+e_i)\right)\bar\eta^N(y)\nabla_{1,j}^Nf\left(\frac{x}{N},\frac{y}{N}\right).
		\end{equation*}
		From the first line to the second line, we use 
        \[
        \pi_b\pi_b= -2\pi_b, \qquad b\in \left(\Td_N\right)^*,
        \]
        and that $c_{x,y}$ is independent of $\eta^N(x)$ and $\eta^N(y)$. Term $\mathbf{I}$ is the core of the proof, connecting the drift term of the speed-change exclusion process to that of a constant-speed exclusion process. Term $\mathbf{II}$ corresponds to the error arising from the discrete approximation.

		\textbf{Step 1}: Estimate of $\mathbf{I}$.
		
		To estimate the term $\mathbf I$, we need to remove more diagonal terms to create more independence. Therefore, we introduce two terms $\mathcal  I_3^N(f,x,i)$ and $\mathcal J_2^N(f,x,i)$ to take the place of $\pi_{x,x+e_i}\mathcal Q^N(f)$ and $\mathcal J_1^N(f,x,i)$:
		\begin{equation*}
			\mathcal  I_3^N(f,x,i):=2N^{-1-d}\sum_{\substack{y:y\notin \Lambda_{2L}^{z(x)}}}\bar\eta^N(y)
			\mathbf{v}_{x,i}\cdot{\nabla_{1}^{N}f\left(\frac{z(x)}{N},\frac{y}{N}\right)},
		\end{equation*}
		\begin{equation*}
			\mathcal J_2^N(f,x,i):=2N^{-1-d}\sum_{j=1}^d\sum_{\substack{y:y\notin \Lambda_{2L}^{z(x)}}}\mathbf D_{ij}(\rho)\left(\bar\eta^N(x)-\bar\eta^N(x+e_i)\right)\bar\eta^N(y){\nabla_{1,j}^{N}f\left(\frac{z(x)}{N},\frac{y}{N}\right)},
		\end{equation*}
        where $\mathbf{v}_{x,i}$ is defined in \eqref{eq.v}. Here we split term $\mathbf I$ into three parts $\mathbf {I.1}$, $\mathbf {I.2}$, and $\mathbf {I.3}$, which will be estimated separately.
		\begin{align*}
			\mathbf I&=\E_\rho\left[N^2\sum_x\sum_{i=1}^d\left(c_{x,x+e_i}\pi_{x,x+e_i}\mathcal Q^N(f)-\mathcal J_1^N(f,x,i)\right)\pi_{x,x+e_i}G\right]=\mathbf{I.1}+\mathbf{I.2}+\mathbf{I.3},
		\end{align*}
		where the 3 terms respectively equal
		\begin{align*}
			\mathbf{I.1} &=\E_\rho\left[N^2\sum_x\sum_{i=1}^d\left(c_{x,x+e_i}\mathcal I_3^N(f,x,i)-\mathcal J_2^N(f,x,i)\right)\pi_{x,x+e_i}G\right],\\
			\mathbf{I.2} &=\E_\rho\left[N^2\sum_x\sum_{i=1}^dc_{x,x+e_i}\left(\pi_{x,x+e_i}\mathcal Q^N(f)-\mathcal I_3^N(f,x,i)\right)\pi_{x,x+e_i}G\right],\\
			\mathbf{I.3} &=\E_\rho\left[N^2\sum_x\sum_{i=1}^d\left(\mathcal J_2^N(f,x,i)-\mathcal J _1^N(f,x,i)\right)\pi_{x,x+e_i}G\right].
		\end{align*}
	
		\textbf{Step 1.1}: Estimate of $\mathbf{I.1}$.
		
		The estimate of the term $\mathbf{I.1}$ relies on the spatial cancellation estimate in (\ref{lem.FluxReplacement}) of Lemma~\ref{lem.Homogenization}. We calculate
		\begin{align*}
			\mathbf{I.1}
			&=\E_\rho\left[N^2\sum_x\sum_{i=1}^d\left(c_{x,x+e_i}\mathcal I_3^N(f,x,i)-\mathcal J_2^N(f,x,i)\right)\pi_{x,x+e_i}G\right]\\
			&=\E_\rho\left[2N^{1-d}\sum_{z\in\mathcal Z_L}\sum_{x\in \Lambda
				_L^z}\sum_{i=1}^d\pi_{x,x+e_i}G\sum_{\substack{y:y\notin \Lambda_{2L}^{z}}}\bar\eta^N(y)\right.\\
			&\left.\qquad\qquad{\nabla_{1}^{N}f\left(\frac{z}{N},\frac{y}{N}\right)}\cdot\left(c_{x,x+e_i}
			\mathbf{v}_{x,i}-\sum_{j=1}^d\mathbf D_{ij}(\rho)\left(\bar\eta^N(x)-\bar\eta^N(x+e_i)\right)e_j\right)\right]\\
			&=\E_\rho\left[2N^{1-d}\sum_{z\in\mathcal Z_L}\sum_{b\in \bar{\left(\Lambda_L^z\right)^*}}\pi_{b}G\sum_{\substack{y:y\notin \Lambda_{2L}^{z}}}\bar\eta^N(y)\sum_{i=1}^d{\nabla_{1,i}^{N}f\left(\frac{z}{N},\frac{y}{N}\right)}\mathbf g_{L,e_i,b}^z\right].
		\end{align*}
        Here we rewrite the bracketed term as \(\mathbf g^z_{L,e_i,b}\), using the definition of the centered flux in \eqref{eq.flux} 
    	\begin{equation*}
    		\mathbf{g}_{L,e_i,b}^z:= c_b\pi_b\left(\ell_{e_i}+\phi_{L,e_i}^z\right)-\pi_b\ell_{\mathbf D(\rho)e_i}.
    	\end{equation*}
        Since 
        \begin{equation*}
			\mathbf g_{L,e_i,b}^z\in \F_{\Lambda_{L+2\mathbf{r}+2}^z}\subset\F_{\Lambda_{2L}^z},
		\end{equation*}
        every variable \(\bar\eta(y)\) with \(y\notin\Lambda^{2z}_L\) is independent of \(\mathbf g^z_{L,e_i,b}\) under the product measure \(\P_\rho\). To make use of spatial cancellation in \cite[Proposition~4.3]{gu2025relaxation}, we use conditional expectation to split the terms whose supports are inside $\Lambda_{2L}^z$: 
		\begin{equation}\label{eq.spatial cancellation.1}
			\E_\rho\left[\sum_{b\in \bar{\left(\Lambda_L^z\right)^*}}\pi_{b}G\sum_{\substack{y:y\notin \Lambda_{2L}^{z}}}\bar\eta^N(y){\nabla_{1,i}^{N}f\left(\frac{z}{N},\frac{y}{N}\right)}\mathbf g_{L,e_i,b}^z\right]
			=\E_\rho\left[\sum_{b\in \bar{\left(\Lambda_L^z\right)^*}}\mathbf g_{L,e_i,b}^z\pi_bV_{e_i}^z\right],
		\end{equation}
		where 
		\begin{equation*}
			V_{e_i}^z= \E_\rho\left[G\sum_{\substack{y:y\notin \Lambda_{2L}^{z}}}\bar\eta^N(y){\nabla_{1,i}^{N}f\left(\frac{z}{N},\frac{y}{N}\right)}\Bigg|\F_{\Lambda_{2L}^z}\right].
		\end{equation*}
		Using spatial cancellation in (\ref{lem.FluxReplacement}) of Lemma~\ref{lem.Homogenization} with \(G=V^z_{e_i}\), we have
		\begin{equation}\label{eq.spatial cancellation.2}
			\E_\rho\left[\sum_{b\in \bar{\left(\Lambda_L^z\right)^*}}\mathbf g_{L,e_i,b}^z\pi_bV_{e_i}^z\right]\leq CL^{\frac{d}{2}-\alpha}\left(\sum_{b\in \bar{\left(\Lambda_L^z\right)^*}}\E_\rho\left[\left(\pi_bV_{e_i}^z\right)^2\right]\right)^\frac{1}{2}.
		\end{equation}
        Therefore, the gain \(L^{-\alpha}\) provided by the spatial cancellation estimate will compensate the growth of the number of bonds in subsequent summations. We use Cauchy--Schwarz inequality for conditional expectation to get the bound for the second moment of $\pi_b V^z_{e_i}$:
        \begin{align*}
			\E_\rho\left[\left(\pi_bV_{e_i}^z\right)^2\right]
			&=\E_\rho\left[\left(\E_\rho\left[\pi_bG\sum_{\substack{y:y\notin \Lambda_{2L}^{z}}}\bar\eta^N(y){\nabla_{1,i}^{N}f\left(\frac{z}{N},\frac{y}{N}\right)}\Bigg|\F_{\Lambda_{2L}^z}\right]\right)^2\right]\\
			&\leq \E_\rho\left[\E_\rho\left[\left(\pi_bG\right)^2\Big|\F_{\Lambda_{2L}^z}\right]\E_\rho\left[\left(\sum_{\substack{y:y\notin \Lambda_{2L}^{z}}}\bar\eta^N(y){\nabla_{1,i}^{N}f\left(\frac{z}{N},\frac{y}{N}\right)}\right)^2\Bigg|\F_{\Lambda_{2L}^z}\right]\right]\notag\\
            &=\E_\rho\left[\E_\rho\left[\left(\pi_bG\right)^2\Big|\F_{\Lambda_{2L}^z}\right]\right]\E_\rho\left[\left(\sum_{\substack{y:y\notin \Lambda_{2L}^{z}}}\bar\eta^N(y){\nabla_{1,i}^{N}f\left(\frac{z}{N},\frac{y}{N}\right)}\right)^2\right]\notag\\
		\end{align*}
        Here from the second line to the third line, we use that \(\{\eta(y):y\notin\Lambda^{2z}_L\}\) is independent of \(\mathcal F_{\Lambda^{2z}_L}\) under \(\P_\rho\). We thus obtain that
		\begin{equation}\label{eq.pi_bV}
            \begin{split}
            \E_\rho\left[\left(\pi_bV_{e_i}^z\right)^2\right]
			&\leq \E_\rho\left[\left(\pi_bG\right)^2\right]\E_\rho\left[\left(\sum_{\substack{y:y\notin \Lambda_{2L}^{z}}}\bar\eta^N(y){\nabla_{1,i}^{N}f\left(\frac{z}{N},\frac{y}{N}\right)}\right)^2\right]\\
			&\leq C_fN^d\E_\rho\left[\left(\pi_bG\right)^2\right].
            \end{split}
		\end{equation}
        Combining \eqref{eq.spatial cancellation.1}, \eqref{eq.spatial cancellation.2} and \eqref{eq.pi_bV}, we have
		\begin{align*}
			\mathbf{I.1}&=2N^{1-d}\sum_{z\in\mathcal Z_L}\E_\rho\left[\sum_{b\in \bar{\left(\Lambda_L^z\right)^*}}\pi_{b}G\sum_{\substack{y:y\notin \Lambda_{2L}^{z}}}\bar\eta^N(y)\sum_{i=1}^d{\nabla_{1,i}^{N}f\left(\frac{z}{N},\frac{y}{N}\right)}\mathbf g_{L,e_i,b}^z\right]\\
             &\leq CN^{1-d}L^{\frac{d}{2}-\alpha}\sum_{z\in\mathcal Z_L}\left(\sum_{b\in \bar{\left(\Lambda_L^z\right)^*}}\E_\rho\left[\left(\pi_bV_{e_i}^z\right)^2\right]\right)^\frac{1}{2}\\
            &\leq C_fN^{1-\frac{d}{2}}L^{\frac{d}{2}-\alpha}\sum_{z\in\mathcal Z_L}\left(\sum_{b\in \bar{\left(\Lambda_L^z\right)^*}}\E_\rho\left[\left(\pi_bG\right)^2\right]\right)^\frac{1}{2}\\
			&\leq C_fL^{-\alpha}\left(N^2\sum_{z\in\mathcal Z_L}\sum_{b\in \bar{\left(\Lambda_L^z\right)^*}}\E_\rho\left[\left(\pi_bG\right)^2\right]\right)^\frac{1}{2}\\
			&\leq C_f L^{-\alpha}\Vert G\Vert_{H^1},  
		\end{align*}
		where from the third line to the fourth line, we use Cauchy--Schwarz inequality. Therefore, we have for some $\gamma>0$,
		\begin{equation*}
			\mathbf{I.1}\leq C_fN^{-\gamma}\Vert G\Vert_{H^1}.
		\end{equation*}
		\textbf{Step 1.2}: Estimate of $\mathbf{I.2}$.
		
		This is a remainder term. By Cauchy--Schwarz inequality, Lemma~\ref{lemma.R_1.expectation} and  Lemma~\ref{lemma.minus.expectation}, there exists an exponent $\gamma > 0$ such that
		\begin{align*}
			\mathbf{I.2}
			&=\E_\rho\left[N^2\sum_x\sum_{i=1}^dc_{x,x+e_i}\left(\pi_{x,x+e_i}\mathcal Q^N(f)-\mathcal I_3^N(f,x,i)\right)\pi_{x,x+e_i}G\right]\\
			&=\E_\rho\left[N^2\sum_x\sum_{i=1}^dc_{x,x+e_i}\left(\mathcal R_1^N(f,x,i)+\mathcal I_1^N(f,x,i)-\mathcal I_3^N(f,x,i)\right)\pi_{x,x+e_i}G\right]\\
			&\leq \E_\rho\left[N^2\sum_x\sum_{i=1}^dc_{x,x+e_i}\mathcal R_1^N(f,x,i)^2\right]^{\frac{1}{2}}\Vert G\Vert_{H^1}\\
			&\quad+\E_\rho\left[N^2\sum_x\sum_{i=1}^dc_{x,x+e_i}\left(\mathcal I_1^N(f,x,i)-\mathcal I_3^N(f,x,i)\right)^2\right]^{\frac{1}{2}}\Vert G\Vert_{H^1}\\
			&\leq C_f N^{-\gamma}\Vert G\Vert_{H^1}.
		\end{align*}
		\textbf{Step 1.3}: Estimate of $\mathbf{I.3}$.
		
		This is a remainder term. By Cauchy--Schwarz inequality and Lemma~\ref{lemma.minus.expectation}, we have
		\begin{align*}
			\mathbf{I.3}&=\E_\rho\left[N^2\sum_x\sum_{i=1}^d\left(\mathcal J_2^N(f,x,i)-\mathcal J_1^N(f,x,i)\right)\pi_{x,x+e_i}G\right]\\
			&\leq \E_\rho\left[N^2\sum_x\sum_{i=1}^d\left(\mathcal J_2^N(f,x,i)-\mathcal J_1^N(f,x,i)\right)^2\right]^\frac{1}{2}\Vert G\Vert_{H^1}\\
			&\leq C_f N^{-\gamma}\Vert G\Vert_{H^1}.
		\end{align*}
		\textbf {Step 2}: Estimate of $\mathbf{II}$.
		
		We calculate
		\begin{align*}
			&\quad\E_\rho\left[N^2\sum_x\sum_{i=1}^d\mathcal J_1^N(f,x,i)\pi_{x,x+e_i}G\right]\\
			&=2N^{1-d}\sum_{i,j=1}^d\sum_{\substack{x,y\\y\neq x,x+e_i}}\mathbf D_{ij}(\rho)\E_\rho\left[\left(\bar\eta^N(x)-\bar\eta^N(x+e_i)\right)\bar\eta^N(y)\pi_{x,x+e_i}G\right]\nabla_{1,j}^Nf\left(\frac{x}{N},\frac{y}{N}\right)\\
			&=-4N^{1-d}\sum_{i,j=1}^d\sum_{\substack{x,y\\y\neq x,x+e_i}}\mathbf D_{ij}(\rho)\E_\rho\left[\left(\bar\eta^N(x)-\bar\eta^N(x+e_i)\right)\bar\eta^N(y)G\right]\nabla_{1,j}^Nf\left(\frac{x}{N},\frac{y}{N}\right)\\
			&=2\E_\rho\left[Q^N\left(\Delta_\mathbf{D}^Nf\right)G\right]\\
			&\qquad-4N^{-1-d}\sum_{i,j=1}^d\sum_{x}\mathbf D_{ij}(\rho)\E_\rho\left[\bar\eta^N(x)\bar\eta^N(x+e_i)G\right]\nabla_{2,i}^N\nabla_{1,i}^N\nabla_{1,j}^N f\left(\frac{x}{N},\frac{x}{N}\right).
		\end{align*}
		Therefore, we have
		\begin{align*}
			\mathbf{II}&=2N^{-1-d}\sum_{i,j=1}^d\sum_{x}\mathbf D_{ij}(\rho)\E_\rho\left[\bar\eta^N(x)\bar\eta^N(x+e_i)G\right]\nabla_{2,i}^N\nabla_{1,i}^N\nabla_{1,j}^N f\left(\frac{x}{N},\frac{x}{N}\right)\\
			&=2N^{-1-d}\sum_{i,j=1}^d\sum_{x}\mathbf D_{ij}(\rho)\E_\rho\left[\bar\eta^N(x)\bar\eta^N(x+e_i)\pi_{x,x+e_i}G\right]\nabla_{2,i}^N\nabla_{1,i}^N\nabla_{1,j}^N f\left(\frac{x}{N},\frac{x}{N}\right)\\
			&\leq 2N^{-2-d}\E_\rho\left[\sum_{i,j=1}^d\sum_x\mathbf D_{ij}(\rho)^2\bar\eta^N(x)^2\bar\eta^N(x+e_i)^2\nabla_{2,i}^N\nabla_{1,i}^N\nabla_{1,j}^N f\left(\frac{x}{N},\frac{x}{N}\right)^2\right]^\frac{1}{2}\Vert G\Vert_{H^1}\\
			&\leq C_fN^{-2-\frac{d}{2}}\Vert G\Vert_{H^1}.
		\end{align*}
		Combining all steps, we get for every $H^1$ function $G$,
		\begin{equation*}
			\E_\rho\left[\left(\mathcal{L}_{N}\mathcal{Q}^N(f)-Q^N\left(\Delta_\mathbf{D}^Nf\right)\right)G\right]\leq C_f N^{-\gamma}\Vert G\Vert_{H^1},
		\end{equation*}
		which concludes the proof.
	\end{proof}
    After this core preparation, we are now ready to give the characterization of the limit for the drift term $\mathcal A_t^N$, denoted by $\mathcal A_t$. 
    \begin{proposition}[Characterization for the limit of the drift term]\label{prop.limit.drift}
		For every function $f\in C^\infty\left(\T^{2d}\right)$, each limit point of $\left\{\mathcal A_t^N(f), t\in [0,T]\right\}_{N \in \N}$ satisfies
		\begin{equation}\label{eq.characterization of the drift term}
			\mathcal A_t(f)=\int_0^t \mathrm{Tr}\left(\mathbf D(\rho)Q_s(\partial^2 _{1,2}f)\right)\d s.
		\end{equation}
	\end{proposition}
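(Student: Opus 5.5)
The plan is to combine the replacement estimate of Proposition~\ref{prop.replacement} with the Kipnis--Varadhan inequality (Lemma~\ref{Kipnis--Varadhan}), and then to pass to the limit using the joint convergence of $(Q^N,\mathcal A^N)$ along a subsequence.

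\textbf{Step 1: reduce to test functions.} First treat a symmetric test function $f$. For a general $f\in C^\infty(\T^{2d})$, decompose $f=f_{\textit{sym}}+f_{\textit{asym}}$ as in the proof of Proposition~\ref{prop.limit.quadratic.variation}. Since the sum in \eqref{eq.def.Q} is symmetric in $(x,y)$, we have $Q^N(f_{\textit{asym}})=0$. The corrector term $Z^N$ is defined with $f$ entering only through its symmetric part (it is constructed for test functions), so it suffices to handle symmetric $f$. Moreover $\partial^2_{1,2}$ preserves symmetry and $Q_s$ annihilates antisymmetric functions, so the right-hand side of \eqref{eq.characterization of the drift term} also depends only on $f_{\textit{sym}}$.

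\textbf{Step 2: replace the drift by a quadratic-field integral.} Apply Lemma~\ref{Kipnis--Varadhan} to the centered function $\mathcal L_N\mathcal Q^N(f)-Q^N(\Delta^N_{\D}f)$. Together with Proposition~\ref{prop.replacement}, this gives
\begin{equation*}
\E_\rho\Big[\sup_{0\le t\le T}\Big|\mathcal A^N_t(f)-\int_0^t Q^N_s\big(\Delta^N_{\D}f\big)\,\d s\Big|^2\Big]\le C_f T N^{-2\gamma}.
\end{equation*}
Centering holds because both terms have mean zero under $\Pr$: the first is $\mathcal L_N$ applied to a function, and $Q^N$ excludes the diagonal.

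Next replace the discrete operator by the continuum one. Using the bound $\E_\rho[Q^N(g)^2]\le C\|g\|_\infty^2$ for the off-diagonal quadratic field, and the uniform convergence $\nabla^N_{1,j}\nabla^N_{1,i}f\to\partial_{x_i}\partial_{x_j}f$ (and likewise in the second variable) at rate $O(N^{-1})$, we get
\begin{equation*}
\int_0^t Q^N_s(\Delta^N_{\D}f)\,\d s-\int_0^t \mathrm{Tr}\big(\D(\rho)Q^N_s(\partial^2_{1,2}f)\big)\,\d s\to 0
\end{equation*}
in $L^2$, uniformly in $t$.

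\textbf{Step 3: pass to the limit.} Take a subsequence along which $(Q^N,\mathcal A^N(f))$ converges jointly. Joint tightness follows from Lemma~\ref{lemma.tightness.A}, Lemma~\ref{lemma.Z.convergence}, Proposition~\ref{prop.tightness.quadratic.M} and the Dynkin decomposition \eqref{eq.Dynkin}, following the convention of Remark~\ref{remark.probability space}. Limits of $Q^N$ are continuous, so the map $Q\mapsto\int_0^t \mathrm{Tr}(\D(\rho)Q_s(\partial^2_{1,2}f))\,\d s$ is continuous from $D([0,T],\S'(\T^{2d}))$, restricted to continuous paths, into $C([0,T],\R)$. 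By the continuous mapping theorem, the integral converges weakly to $\int_0^t \mathrm{Tr}(\D(\rho)Q_s(\partial^2_{1,2}f))\,\d s$. Since $\mathcal A^N(f)$ differs from this integral by a term vanishing in probability uniformly in $t$, the limit $\mathcal A(f)$ is identified as claimed.

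\textbf{Main obstacle.} The delicate point is ensuring that the limit $Q$ appearing here is the same object as the limit of $\mathcal Q^N$, and that continuity of paths holds so the continuous mapping theorem applies. Lemma~\ref{lemma.Z.convergence} removes the first concern, since $Z^N\to0$. For the second, I would invoke C-tightness of $Q^N(g)$, obtained from C-tightness of each term in \eqref{eq.Dynkin}, together with Mitoma's criterion.
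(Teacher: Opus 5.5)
Your proposal is correct and follows essentially the same route as the paper. Kipnis--Varadhan combined with Proposition~\ref{prop.replacement} gives uniform-in-time $L^2$ closeness of $\mathcal A^N_t(f)$ and $\int_0^t Q^N_s(\Delta^N_{\D}f)\,\d s$; the uniform convergence of the discrete operator and passage to the limit along a jointly convergent subsequence then identify $\mathcal A(f)$.

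Your reduction to symmetric $f$ and the explicit bound $\E_\rho[Q^N(g)^2]\le C\|g\|_\infty^2$ spell out details the paper leaves implicit. Also, continuity of the limit paths is not actually needed, since $x\mapsto\int_0^{\cdot}x(s)\,\d s$ is continuous on all of $D([0,T],\R)$.
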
 
    \begin{proof}
        By Kipnis--Varadhan Lemma~\ref{Kipnis--Varadhan}, Proposition~\ref{prop.replacement} implies for each test function $f$,
	    \[
	    \E_\rho\left[\sup_{t\in[0,T]}\left\vert\mathcal A_t^N(f)-\int_0^t Q_s^N\left(\Delta_\mathbf{D}^Nf\right)\,\d s\right\vert^2\right]\leq C_fTN^{-\gamma}.
	    \]
        Therefore, $\Ll(\mathcal A_{t}(f)\Rr)_{t \in [0,T]}$ is also the limit of $\Ll(\int_0^{t} Q_s^N\left(\Delta_\mathbf{D}^Nf\right)\,\d s\Rr)_{t \in [0,T]}$.

        Recall the correction \eqref{eq.correction} and decomposition \eqref{eq.Dynkin}. As the tightness of $\mathcal M^N, \mathcal A^N, Z^N$ is restively justified in Proposition~\ref{prop.tightness.quadratic.M}, Lemma~\ref{lemma.tightness.A}, Lemma~\ref{lemma.Z.convergence}, the process $\{Q^N\}_{N \in \N}$ is also tight in $D\left([0,T], \S' \left(\Td\right)\right)$, i.e.
        \begin{align*}
            Q^{N_k}\xRightarrow{N_k\to\infty}Q .
        \end{align*}
        We also have $\Delta_\mathbf{D}^{N_k} f\xrightarrow{N_k} \mathrm{Tr}(\mathbf D(\rho)\partial^2_{1,2}f)$ uniformly. Taking the integral with respect to time and space, for every function $f\in C^\infty(\T^{2d})$, the following convergence holds in $D([0, T], \mathbb{R})$:
        \[
        \int_0^t Q_s^{N_k}\left(\Delta_\mathbf{D}^{N_k}f\right)\,\d s\xRightarrow{N_k\to\infty} \int_0^t \mathrm{Tr}\left(\mathbf D(\rho)Q_s(\partial _{1,2}f)\right)\d s.
        \]
        Therefore, along the same subsequence $\mathcal A^{N_k}(f)$, we have the following characterization:
        \begin{equation*}
			\mathcal A_t(f)=\int_0^t \mathrm{Tr}\left(\mathbf D(\rho)Q_s(\partial^2 _{1,2}f)\right)\d s.
		\end{equation*}
        This completes the proof.
    \end{proof}

	\section{Characterization of quadratic field}\label{section.characterization}
    In this section, we give the characterization for the quadratic field. In order to accomplish this, we first summarize the discussion for weak convergence in the previous sections into the following lemma:
    \begin{lemma}[Weak joint convergence]\label{lemma.weak convergence}
        The following joint convergence holds in the weak sense along subsequence
    \[
    (Q_t^N, \mathcal Q_t^N, \Y_t^N, \M_t^N, \mathcal A_t^N)_{t\in [0,T]} \xRightarrow{N \to \infty} (Q_t, Q_t,\Y_t,\M_t, \mathcal A_t)_{t \in [0,T]},
    \] 
    and $\M,\mathcal A$ are characterized as Propositions~\ref{prop.Characterization for the convergence of the martingale term} and ~\ref{prop.Characterization for the convergence of the drift term}.
    \end{lemma}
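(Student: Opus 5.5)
The plan is to assemble the joint convergence from the marginal tightness results already established, and then identify the limits along a common subsequence. First I will show joint tightness of the vector $(Q^N,\mathcal Q^N,\Y^N,\M^N,\mathcal A^N)$ in the product Skorokhod space. Tightness of each coordinate implies tightness of the vector, since a product of compact sets is compact. The coordinates are handled as follows. For $\Y^N$ this is Proposition~\ref{prop.linear fluctuation}. For $\M^N$ it is Proposition~\ref{prop.tightness.quadratic.M}, combined with Mitoma's criterion. For $\mathcal A^N$ it is Lemma~\ref{lemma.tightness.A}. For $Z^N$ it is Lemma~\ref{lemma.Z.convergence}, whose limit is zero. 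Then $\mathcal Q^N$ is tight through the Dynkin decomposition \eqref{eq.Dynkin}. Here $\mathcal Q^N_0=Q^N_0+Z^N_0$, and $Q^N_0(f)$ is bounded in $L^2$ by a direct computation with Bernoulli product measure. Finally $Q^N=\mathcal Q^N-Z^N$. Since every coordinate limit is continuous, the limit points lie in $C$, and sums of tight sequences with continuous limits remain C-tight. By Prokhorov's theorem we then extract a subsequence $N_k$ along which the whole vector converges jointly.

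Next I will identify the limits along $N_k$. Because $Z^{N_k}\Rightarrow 0$, and $0$ is deterministic, Slutsky's lemma gives that the limits of $Q^{N_k}$ and $\mathcal Q^{N_k}$ coincide, say with $Q$. The limit of $\Y^{N_k}$ is the Ornstein--Uhlenbeck process $\Y$, by uniqueness in Proposition~\ref{prop.linear fluctuation}. For the martingale, Proposition~\ref{prop.tightness.quadratic.M} gives the joint convergence \eqref{eq.M.Nk.joint} of $(\M^{N_k},\la\M^{N_k}\ra)$. The argument of Proposition~\ref{prop.limit.quadratic.variation} shows that $\la\M^{N_k}(f)\ra$ is $L^1$-close to a continuous functional of $\Y^{N_k}$. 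Passing to the limit within the joint convergence, I obtain that $\la\M(f)\ra$ equals the stated functional of the same $\Y$. This is exactly the coupling on one probability space described in Remark~\ref{remark.probability space}. For the drift, Proposition~\ref{prop.limit.drift} shows that $\mathcal A^{N_k}(f)$ is $L^2$-close, uniformly in time, to $\int_0^t Q^{N_k}_s(\Delta^{N_k}_\D f)\,\d s$. The latter is a continuous functional of $Q^{N_k}$, modulo the uniform convergence $\Delta_\D^{N}f\to \mathrm{Tr}(\D\partial^2_{1,2}f)$. Joint convergence then gives $\mathcal A_t(f)=\int_0^t \mathrm{Tr}(\D(\rho)Q_s(\partial^2_{1,2}f))\,\d s$, with the same $Q$ that appears in the first coordinate.

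The main obstacle I expect is ensuring that the identifications are \emph{consistent on one probability space}. Each proposition identifies the law of its own coordinate, but we need $\la\M\ra$ expressed through the same $\Y$ and $\mathcal A$ through the same $Q$. My plan for this is to phrase every replacement as an $L^1$ or $L^2$ approximation by a continuous map of the jointly converging vector, using the functional $\Phi(\Y)$ for the quadratic variation and $\Psi(Q)$ for the drift. Exponentially small differences in probability vanish in the limit, so the relations hold almost surely under the joint limit law. One technical point concerns continuity of these maps on $D([0,T],\S')$, which follows because the limits are continuous in time. Another is the quadratic growth of $\Phi$, which I would handle by truncation together with the uniform $L^2$ bounds on $\Y^N$ pairings.
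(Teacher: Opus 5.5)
Your proposal is correct and follows the paper's route. The paper's proof simply combines the marginal tightness results into joint tightness and uses Lemma~\ref{lemma.Z.convergence} ($Z^N\Rightarrow 0$) to identify the limits of $Q^N$ and $\mathcal Q^N$, taking the characterizations of $\mathcal M$ and $\mathcal A$ from Propositions~\ref{prop.limit.quadratic.variation} and~\ref{prop.limit.drift}. Your extra bookkeeping makes explicit what the paper leaves implicit: C-tightness of the sum in the Dynkin decomposition, Slutsky, and obtaining $\langle\mathcal M(f)\rangle$ and $\mathcal A(f)$ as limits of continuous functionals of the same jointly converging $\mathcal Y^{N_k}$ and $Q^{N_k}$. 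One small correction: the replacement errors there are polynomial, of order $C_fN^{-\gamma}$, not exponentially small, but that is all the argument needs.
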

    \begin{proof}
        As mentioned, the tightness proved in Proposition~\ref{prop.tightness.quadratic.M}, Lemma~\ref{lemma.tightness.A}, Lemma~\ref{lemma.Z.convergence} entails that of $(Q_t^N, \mathcal Q_t^N, \Y_t^N, \M_t^N, \mathcal A_t^N, Z_t^N)_{t\in [0,T]}$. Lemma~\ref{lemma.Z.convergence} states that the limit of $(Z_t^N)_{t\in [0,T]}$ is zero process. Therefore, we have the target joint convergence, which finishes the proof.  
    \end{proof}

    Based on the weak convergence result in Lemma~\ref{lemma.weak convergence}, for every function ${f\in C^\infty\left(\T^{2d}\right)}$, we have 
    \begin{equation}\label{eq.mart problem}
        Q_t(f) = Q_0(f) + \int_0^t \mathrm{Tr}\left(\mathbf D(\rho)Q_s(\partial^2 _{1,2}f)\right)\d s + \mathcal{M}_t(f),
    \end{equation}
    where $\mathcal{M}(f)$ is a continuous martingale of quadratic variation
	\begin{multline*}
		\left\la \mathcal M (f)\right\ra_t
		= \int_0^t\int_{\T^d}\left\{\Y_s\left(\nabla_{1}f(x,\cdot)\right)+\Y_s\left(\nabla_{2}f(\cdot,x)\right)\right\}\\\cdot \cc(\rho)\,\left\{\Y_s\left(\nabla_{1}f(x,\cdot)\right)+\Y_s\left(\nabla_{2}f(\cdot,x)\right)\right\}\,\d x\,\d s.
	\end{multline*}
    It is nearly as Theorem~\ref{thm:quadratic fluctuation}, except \eqref{eq.W}.
	The quadratic variation and Levy's characterization of Brownian motion suggests that 
	\begin{equation*}
		\mathcal{M}_t(f)=\int_0^t\int_{\T^d}\sqrt{\cc(\rho)}\left\{\Y_s\left(\nabla_{1}f(x,\cdot)\right)+\Y_s\left(\nabla_{2}f(\cdot,x)\right)\right\}\cdot\d{\tilde \omega}(s,x),
	\end{equation*}
	where $\tilde w=\left\{\tilde w_i(t)\right\}_{1\leq i\leq d}$ is a $d$-dimensional space-time white noise. The final step is to verify that the limiting noise term $\tilde w$ coincides with the white noise $w$ defined in \eqref{first-order fluctuation}. The proof relies on \cite[Theorem~3.9]{GJ19}. The only difference is that the diffusion operator is not an isotropic one. We define the process 
    $\{\mathcal N_t;t\in [0,T]\}$ in the following way: for every function $f\in C^{\infty}\left(\Td\right)$,
    \begin{equation}\label{def.N}
        \mathcal N_t(f) :=\mathcal Y_t(f)-\mathcal Y_0(f)-\int_0^t \mathcal Y_s\left(\Delta_{\mathbf D}f\right),
    \end{equation}
    and summarize the above discussion in the following proposition.
    \begin{proposition}\label{prop.mart.unique}
        The distribution of $(Q_t, \mathcal Y_t, \mathcal N_t)_{t\in [0,T]}$ is uniquely determined.  
    \end{proposition}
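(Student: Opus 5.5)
The plan is to follow \cite[Theorem~3.9]{GJ19} and reduce uniqueness of the law of $(Q_t,\Y_t,\mathcal N_t)_{t\in[0,T]}$ to two facts: the law of the pair $(\Y,\mathcal N)$ is fixed by Proposition~\ref{prop.linear fluctuation}, and $Q$ is a measurable functional of $(Q_0,\Y,\mathcal N)$ whose form does not depend on the chosen limit point.

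\textbf{Step 1: the pair $(\Y,\mathcal N)$.} By Proposition~\ref{prop.linear fluctuation}, $\Y$ is the unique stationary solution of the Ornstein--Uhlenbeck equation \eqref{first-order fluctuation}. For every $f\in C^\infty(\Td)$, $\mathcal N_t(f)$ in \eqref{def.N} is a continuous martingale with deterministic quadratic variation $t\int_{\Td}\nabla f\cdot \cc(\rho)\nabla f\,\d x$. Hence $\mathcal N_t=\sqrt{\cc(\rho)}\nabla\cdot\omega(t)$ is a Gaussian process, and its law, jointly with that of $\Y$, is determined.

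\textbf{Step 2: $Q_0$ in terms of $\Y_0$.} The stationarity of $Q$ under $\Pr$ and the convergence of $Q^N_0$ give $Q_0=\Y_0\otimes\Y_0-\chi(\rho)\,\delta_{\mathrm{diag}}$, understood as a Wick product. Thus $Q_0$ is a measurable function of $\Y_0$, so $(Q_0,\Y,\mathcal N)$ has a unique law.

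\textbf{Step 3: identify the martingale $\M$.} In the joint limit of Lemma~\ref{lemma.weak convergence}, I would compute the cross variation $\la \M(f),\mathcal N(g)\ra_t$. At the discrete level this is obtained from the carr\'e du champ of $\mathcal Q^N(f)$ and $\Y^N(g)$. Using the same replacement as in Proposition~\ref{prop.limit.quadratic.variation}, with $\mathbf v_{x,i}$ paired against $\pi_{x,x+e_i}\Y^N(g)$ (after adding the linear corrector, exactly as in \cite{fun96}) and the homogenization estimate of Lemma~\ref{lem.Homogenization}, this cross variation converges to
\begin{equation*}
\int_0^t\int_{\Td}\left\{\Y_s\left(\nabla_1 f(x,\cdot)\right)+\Y_s\left(\nabla_2 f(\cdot,x)\right)\right\}\cdot\cc(\rho)\nabla g(x)\,\d x\,\d s.
\end{equation*}
Combining this with Proposition~\ref{prop.limit.quadratic.variation}, I would compute $\la \M(f)-\mathcal M'(f)\ra_t$, where $\mathcal M'(f)$ denotes the stochastic integral \eqref{eq.W} driven by the noise extracted from $\mathcal N$. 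This variation vanishes, so $\M(f)=\mathcal M'(f)$ almost surely, which fixes $\tilde\omega=\omega$.

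\textbf{Step 4: uniqueness for the linear equation.} Equation \eqref{eq.mart problem} now reads $\d Q_t=\mathrm{Tr}(\D(\rho)\partial^2_{1,2}Q_t)\,\d t+\d\M_t$, with $\M$ a functional of $(\Y,\mathcal N)$. I would test against $P_{t-s}f$, where $P$ is the semigroup of the elliptic operator $\mathrm{Tr}(\D(\rho)\partial^2_{1,2})$ on $\T^{2d}$. Itô's formula then gives the mild form
\begin{equation*}
Q_t(f)=Q_0(P_tf)+\int_0^t \d\M_s(P_{t-s}f),
\end{equation*}
which is an explicit functional of $(Q_0,\Y,\mathcal N)$. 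This yields uniqueness of the joint law.

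The main obstacle is Step 3, the cross-variation computation with the non-gradient corrector. One must check that the joint carr\'e du champ replacement still produces $\cc(\rho)$ rather than an uncorrected conductivity. I would try to handle this by polarizing the quadratic-variation result, applying it to $\mathcal Q^N(f)+\Y^N(g)$ corrected by $Z^N$ together with Funaki's linear corrector.
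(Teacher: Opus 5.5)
Your argument is correct in outline, but it identifies $\M$ by a different and much heavier route than the paper. The paper computes no cross variation. It checks hypotheses (i)--(v) of \cite[Theorem~3.9]{GJ19} and delegates uniqueness to that theorem. The substantive hypothesis (iii) comes from noticing that your Step~2 identity is not special to $t=0$.

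The exact relation $Q_t^N(f_1\otimes f_2)=\Y_t^N(f_1)\Y_t^N(f_2)-N^{-d}\sum_x\bar\eta_t^N(x)^2f_1(\frac{x}{N})f_2(\frac{x}{N})$ holds at every time. Hence $Q_t(f_1\otimes f_2)=\Y_t(f_1)\Y_t(f_2)-\chi(\rho)\int_{\Td}f_1f_2$ for all $t$. It\^o's formula for the product of the two Ornstein--Uhlenbeck coordinates, an integration by parts, and $\cc(\rho)=2\chi(\rho)\D(\rho)$ then match the drift in \eqref{eq.mart problem}. They give directly
$\M_t(f_1\otimes f_2)=\int_0^t\{\Y_s(f_1)\,\d\mathcal N_s(f_2)+\Y_s(f_2)\,\d\mathcal N_s(f_1)\}$.

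So what you flag as the main obstacle can be bypassed: the joint carr\'e du champ of $\mathcal Q^N(f)$ with a corrected linear field, the convergence of cross variations by polarization, and a common filtration for $\M$ and $\mathcal N$. In this identification the non-gradient structure enters only through $\la\mathcal N(g)\ra$, which is Funaki's result.

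In fact, $Q_t$ is the Wick square of $\Y_t$ for every $t$; extend from tensor products by density, using $\Er[Q_t^N(g)^2]\leq 2\chi(\rho)^2\Vert g\Vert_\infty^2$. Then $Q$ is a measurable functional of $\Y$, and uniqueness of the law of $(Q,\Y,\mathcal N)$ follows from Proposition~\ref{prop.linear fluctuation} alone, without your Steps~3--4.

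What your route buys is robustness and self-containedness. The cross-variation identification does not use that $Q$ is a polynomial in $\Y$, and your mild formula replaces the citation of \cite{GJ19}.

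If you keep Step~3, note that $\mathcal N$ only determines the divergence part of the noise, so you do not really prove $\tilde\omega=\omega$. What you need, and what suffices, is that the integrand in \eqref{eq.W} is a gradient, $\nabla_x h_s(x)$ with $h_s(x)=\Y_s(f(x,\cdot))+\Y_s(f(\cdot,x))$. The target martingale is then $\int_0^t\d\mathcal N_s(h_s)$.

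Also, your description of $Q_0$ as a Wick square is the accurate one. For example, $Q_0(f_1\otimes f_1)=\Y_0(f_1)^2-\chi(\rho)\Vert f_1\Vert_{L^2}^2$ lies in the second chaos and is not Gaussian. So your Step~2 is what correctly pins down the initial law, rather than the Gaussianity asserted in the paper's condition (v). That condition's characteristic-function computation factorizes over pairs $x\neq y$ as if the products $\bar\eta^N(x)\bar\eta^N(y)$ were independent.
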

	\begin{proof}
	    Let us verify the conditions in \cite[Theorem~3.9]{GJ19}, which are listed below:
        \begin{enumerate}
            \item [(i)] For every \( f \in C^\infty(\mathbb{T}^{2d}) \), the process \( \{\mathcal{N}_t(f); t \in [0, T]\} \) is a continuous martingale of quadratic variation 
            \[
            t\int_{\Td}\nabla f(u)\cdot\cc(\rho)\nabla f(u)\, \d u.
            \]
            \item [(ii)] For every \( f \in C^\infty(\mathbb{T}^d) \), the process \( \{\mathcal{Y}_t(f); t \in [0, T]\} \) satisfies the relation
            \[
            \mathcal Y_t(f)=\mathcal Y_0(f)+\int_0^t \mathcal Y_s\left(\Delta_{\mathbf D}f\right)\,\d s+\mathcal N_t(f).
            \]
            \item [(iii)] There exists a \( \mathcal{S}'(\mathbb{T}^{2d}) \)-valued process \( \{\mathcal{M}_t; t \in [0, T]\} \) such that for any \( f_1, f_2 \in C^\infty(\mathbb{T}^d) \),
            \[
            \mathcal{M}_t(f_1(x)f_2(y)) = \int_0^t \sqrt{\cc(\rho)}\left\{ \mathcal{Y}_s(f_1) \, \d\mathcal{N}_s(f_2) + \mathcal{Y}_s(f_2) \, \d\mathcal{N}_s(f_1) \right\}.
            \]
            \item [(iv)] For every \( f \in C^\infty(\mathbb{T}^{2d}) \), we have
            \[
            Q_t(f) = Q_0(f) + \int_0^t Q_s(\Delta
            _{\D}f) \, \d s + \mathcal{M}_t(f).
            \]
            \item [(v)] For every \( f \in C^\infty(\mathbb{T}^{2d}) \) and any \( t \in [0, T] \), the real-valued random variable \( Q_t(f) \) has a Gaussian distribution of mean zero and variance 
            \[
            \chi(\rho)^2\int_{\mathbb{T}^{2d}} f(u,v)^2 \, \d u \, \d v.
            \]
        \end{enumerate}
        Now we verify them one by one. 
        
        By Proposition~\ref{prop.linear fluctuation}, the quadratic variation of $\mathcal N_t(f)$ is given by 
	    \[
	       \la \mathcal N(f)\ra_t=t\int_{\Td}\nabla f(u)\cdot\cc(\rho)\nabla f(u)\, \d u.
	    \]
    This is exactly the condition (i). 
    
    Condition (ii) is automatically satisfied by definition of $\mathcal N_t$ in \eqref{def.N}. 

    As to condition (iii), we analyze with functions of the form $f(x,y)=f_1(x)f_2(y)$. By the definition of $Q_t^N$ in \eqref{eq.def.Q}, we see that
	\[
	Q_t^N(f)=
	\Y_t^N(f_1)\Y_t^N(f_2)-N^{-d}\sum_{x}\bar\eta_t^N(x)^2f_1\left(\frac{x}{N}\right)f_2\left(\frac{x}{N}\right),
	\]
	and taking $N\to\infty$, we conclude that
	\[
	Q_t(f)=
	\Y_t(f_1)\Y_t(f_2)-\chi(\rho)\int_{\Td}f_1\left(u\right)f_2\left(u\right)\,\d u.
	\]
	By \Ito's formula, we obtain the martingale decomposition of $\left\{\Y_t(f_1)\Y_t(f_2)\right\}_{t\in [0,T]}$:
	\begin{multline*}
		\Y_t(f_1)\Y_t(f_2)
		=\Y_0(f_1)\Y_0(f_2)+t\int_{\Td}\nabla f_1(u)\cdot \cc(\rho)\nabla f_2(u)\,\d u\\+\int_0^t\Y_s\left(\Delta_{\mathbf D}f_1\right)\Y_s(f_2)+\Y_s(f_1)\Y_s\left(\Delta_{\mathbf D}
		f_2\right)\,\d s\\
		+\int_0^t\int_{\Td}\Y_s(f_2)\sqrt{\cc(\rho)}\nabla f_1(u)\cdot\,\d\omega(s,u)+\Y_s(f_1)\sqrt{\cc(\rho)}\nabla f_2(u)\cdot\,\d\omega(s,u).
	\end{multline*}
	Note that with integration by parts and the relation between $\cc$ and $\mathbf D$ in \eqref{eq.def.D}, we have
	\begin{multline*}
		\int_0^t \mathrm{Tr}(\D(\rho)Q_s(\partial^2_{1,2}f))\,\d s
		=t\int_{\Td}\nabla f_1(u)\cdot \cc(\rho)\nabla f_2(u)\,\d u\\+\int_0^t\Y_s\left(\Delta_{\D}f_1\right)\Y_s(f_2)+\Y_s(f_1)\Y_s\left(\Delta_{\mathbf D}
		f_2\right)\,\d s.
	\end{multline*}
	From this we conclude that for function $f$ of the form $f(x,y)=f_1(x)f_2(y)$, we have 
	\begin{multline*}
		Q_t(f)=Q_0(f)+\int_0^t \mathrm {Tr}\left(\D(\rho)Q_s(\mathbf \partial_{1,2}^2f)\right)\,\d s
		\\+\int_0^t\sqrt{\cc(\rho)}\left\{\Y_s(f_2)\,\d \mathcal N_s(f_1)+\Y_s(f_1)\,\d \mathcal N_s(f_2)\right\}.
	\end{multline*}
    This finishes the verification of condition (iii). 
    
    The martingale problem in \eqref{eq.mart problem} gives condition (iv). 
    
    Finally, we show that the limit field is Gaussian for all time $t$ in condition (v). Here we use characteristic functions. For $f\in C^{\infty}\left(\T^{2d}\right)$, we have the characteristic function for the limit:
	\begin{equation*}
		\Phi^N(s)=\E_{\rho}\left[\exp\left(\mathit{i}s Q_t^N(f)\right)\right]=\prod_{x,y:x\neq y}\E_{\rho}\left[\exp\left(\mathit{i}s N^{-d}f\left(\frac{x}{N},\frac{y}{N}\right)\bar\eta_t^N(x)\bar\eta_t^N(y)\right)\right].
	\end{equation*}
	We use Taylor's expansion to get
	\begin{equation*}
		\E_{\rho}\left[\exp\left(\mathit{i}s N^{-d}f\left(\frac{x}{N},\frac{y}{N}\right)\bar\eta_t^N(x)\bar\eta_t^N(y)\right)\right]=1-\frac{1}{2}\chi(\rho)^2s^2N^{-2d}f\left(\frac{x}{N},\frac{y}{N}\right)^2+o\left(N^{-2d}\right).
	\end{equation*}
	Therefore, we have
	\begin{align*}
		\log \Phi^N(s)&=\sum_{x,y:x\neq y}\log\E_{\rho}\left[\exp\left(\mathit{i}s N^{-d}f\left(\frac{x}{N},\frac{y}{N}\right)\bar\eta_t^N(x)\bar\eta_t^N(y)\right)\right]\\
		&=-\sum_{x,y:x\neq y} \frac{1}{2}\chi(\rho)^2s^2N^{-2d}f\left(\frac{x}{N},\frac{y}{N}\right)^2+o(1)\\
		&\xrightarrow{N\to\infty}-\frac{1}{2}\chi(\rho)^2s^2\Vert f\Vert_{L^2}^2,
	\end{align*}
	which shows that for all $f\in C^{\infty}\left(\T^{2d}\right)$, the fluctuation $Q_t^N(f)$ tends to a Gaussian distribution as $N\to\infty$ at every time $t$, which concludes the proof.
	\end{proof}
	Now we are ready to summarize the proof for our main theorem.
    \begin{proof}[Proof of Theorem~\ref{thm:quadratic fluctuation}]
    
    Recall that by Dynkin's formula, for $f\in C^\infty(\T^{2d})$, we have the decomposition
	\begin{equation*}
		\mathcal{Q}_t^N(f)=\mathcal{Q}_0^N(f)+\mathcal{A}_t^N(f)+\M_t^N(f).
	\end{equation*}
    In Section~\ref{sec.M.tight}, we prove the tightness of $\M_t^N(f)$ and the limit of $\la\M^N(f)\ra_t$ is characterized in Section~\ref{sec.M.limit}. For the drift term $\mathcal A_t^N(f)$, the tightness is shown in Section~\ref{sec.A.tight} and the limit is verified in Section~\ref{sec.A.limit}. Therefore, we get the joint convergence as stated in Lemma~\ref{lemma.weak convergence}, and every limit point satisfies the martingale problem in \eqref{eq.mart problem}. With the help of Proposition~\ref{prop.mart.unique}, we conclude that the martingale problem has a unique solution in distribution. Above all, the limit point of $(Q_t^N)_{t\in [0,T]}$ is unique, and we have the conclusion that the whole sequence converges to that limit point, which is characterized by the Ornstein--Uhlenbeck process in Theorem~\ref{thm:quadratic fluctuation}. This finishes the proof.

    \end{proof}

	\appendix
		\section{Moment estimates of remainder terms} 
		In the appendix, we give the detailed calculation for the moments of the remainder terms.

        For the proof of Lemma~\ref{lemma.R_1.expectation}, which is used in the proof of Proposition~\ref{prop.B.bound.L1.L2} in Section~\ref{sec.M.tight}, we need the following three auxiliary lemmas: 
		\begin{lemma}\label{lemma.R_{1,1}.expectation}
			For every test function $f$, we have
			\begin{align*}
				&N^2\sum_{i=1}^d\sum_{x}\E_{\rho}\left[\mathcal R_{1,1}^N(f,x,i)^2\right]
				\leq C_{f}\left(N^{-2}L^2+N^{-d}L^{d+2}\right),
			\end{align*}
			and
			\begin{multline*}
				N^4\sum_{i=1}^d\sum_{x}\E_{\rho}\left[\mathcal R_{1,1}^N(f,x,i)^4\right]\\
				\leq C_{f}N^{-d}\left(N^{-4}L^4+N^{-2-d}L^{d+3}+N^{-2d}L^{2d+3}+\left(N^{-4-d}L^{-2d}+N^{-2d}L^{-1}\right)\Vert \phi_L\Vert_\infty^4\right)\notag.
			\end{multline*}
		\end{lemma}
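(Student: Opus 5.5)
We first compute $\pi_{x,x+e_i}$ acting inside $\mathcal R_{1,1}^N$. The linear statistic $S^z:=\sum_{y\notin\Lambda_L^z}\bar\eta^N(y)\nabla_1^N f(z/N,y/N)$ changes under the exchange only when exactly one of $x,x+e_i$ lies outside $\Lambda_L^z$. Hence $\pi_{x,x+e_i}S^z$ vanishes unless $x\notin\Lambda_L^z$ or $x+e_i\notin\Lambda_L^z$. In the generic case (both outside) the change is
\[
\bigl(\bar\eta^N(x+e_i)-\bar\eta^N(x)\bigr)\bigl(\nabla_1^Nf(z/N,x/N)-\nabla_1^Nf(z/N,(x+e_i)/N)\bigr),
\]
which has size $O(N^{-1})$ by smoothness of $f$. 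When exactly one endpoint lies in $\Lambda_L^z$, i.e. the bond $\{x,x+e_i\}$ crosses $\partial\Lambda_L^z$, the change is $O(1)$.

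Accordingly we split $\mathcal R_{1,1}^N(f,x,i)=A(x,i)+B(x,i)$. Here $A$ collects $z$ with $x,x+e_i\notin\Lambda_L^z$, so
\[
A=2N^{-2-d}\sum_{z}\phi_L^z\cdot(\bar\eta(x+e_i)-\bar\eta(x))\,\partial_2\nabla_1 f(\cdots)\,e_i+\dots,
\]
while $B$ contains the at most two boxes $z$ with one endpoint in $\Lambda_L^z$, namely $z(x)$ and $z(x+e_i)$, and only when $x$ is a boundary site. For $B$ we have $|B|\le CN^{-1-d}|\phi_L^{z(x)}|+\dots$, nonzero for $O(N^dL^{-1})$ sites $x$.

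\textbf{Second moment of $B$.} We use the $L^2$ bound of Lemma~\ref{lem.Homogenization}(\ref{lem.CorrectorLp}), $\E[|\phi_L^z|^2]\le CL^{d+2}$, rather than the $L^\infty$ bound. This gives
\[
N^2\sum_{x,i}\E[B^2]\le CN^{2}N^{-2-2d}\cdot N^dL^{-1}\cdot L^{d+2}\le C N^{-d}L^{d+1},
\]
which fits inside $N^{-d}L^{d+2}$.

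\textbf{Second moment of $A$.} Expand the square as a double sum over $z,z'$ of $\E[\phi_L^z\phi_L^{z'}(\bar\eta(x+e_i)-\bar\eta(x))^2]$. Since $\phi_L^z$ is centered and $\F_{(\Lambda_L^z)^-}$-measurable (Lemma~\ref{lem.Homogenization}(\ref{lem.Correctorlocal})), and distinct boxes are independent, the off-diagonal terms vanish except for boxes touching $x$, which are already excluded. Only the diagonal $z=z'$ survives, giving
\[
N^2\sum_{x,i}\E[A^2]\le CN^{2}N^{-4-2d}\cdot N^d\cdot (N/L)^d L^{d+2}= CN^{-2}L^{2}\cdot N^{-d}\cdot N^{d}\cdots
\]
After bookkeeping, this yields the term $N^{-2}L^2$. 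Adding the two contributions gives the claimed second-moment bound.

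\textbf{Fourth moments.} We proceed in the same way, now expanding fourth powers via Wick-type pairing as in the proof of Proposition~\ref{prop.B.bound.L1.L2}. For the sum over $z$ in $A$, independence and centering force the indices $z_1,\dots,z_4$ to pair up, except on the region near $x$. Pairs are bounded using $\E[|\phi|^4]\le\|\phi_L\|_\infty^2\E[|\phi|^2]\le CL^{d+2}\|\phi_L\|_\infty^2$, or with the $L^2$ bound squared for distinct pairs. This produces the terms $N^{-4}L^4$ (paired distinct boxes) and, from single boxes, terms involving $\|\phi_L\|_\infty^4$. The boundary piece $B$ gives $N^{4}N^{-4-4d}N^dL^{-1}\|\phi_L\|^4_\infty=N^{-d}N^{-2d}L^{-1}\|\phi_L\|_\infty^4$. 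Mixed terms, handled by Hölder, give the intermediate powers $N^{-2-d}L^{d+3}$ and $N^{-2d}L^{2d+3}$.

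\textbf{Main obstacle.} The hard part is the combinatorics of the fourth-moment expansion for $A$. One must check that the cross terms with $\phi_L^{z}$ for distinct $z$ factorize correctly even though $(\bar\eta(x+e_i)-\bar\eta(x))$ may lie inside some box $\Lambda_L^{z_k}$. We would handle this by conditioning on the two sites $x,x+e_i$ and treating the at most one box containing them separately, paying $\|\phi_L\|_\infty$ there. That box is the source of the $\|\phi_L\|_\infty^4$ terms in the bound.
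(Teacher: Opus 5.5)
This is essentially the paper's argument. Like the paper, you reduce to diagonal (second moment) or paired (fourth moment) box terms, using that the $\phi_L^z$ are centered, independent across boxes, and independent of $\eta^N(x),\eta^N(x+e_i)$ whenever $\pi_{x,x+e_i}S^z\neq 0$. You then count with the same three cases $0$, $O(N^{-1})$, $O(1)$, the $O(N^dL^{-1})$ box-crossing bonds, $\E_\rho[|\phi_L^z|^2]\le CL^{d+2}$ for the diagonal and for distinct paired boxes, and $\|\phi_L\|_\infty$ when all four box indices coincide.

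Only the fourth-moment bookkeeping needs correcting:
\begin{itemize}
\item In the paper, $N^{-2-d}L^{d+3}$ and $N^{-2d}L^{2d+3}$ come from two-pair terms $z_1\neq z_2$ in which one or both boxes contain an endpoint of the bond, not from H\"older. With your $A+B$ split you can simply use $(A+B)^4\le 8(A^4+B^4)$, and these terms never appear.
\item The repeated generic box actually gives $N^{-4-2d}L^{-d}\|\phi_L\|_\infty^4$, not the paper's $L^{-2d}$. For $d\ge 5$, use your bound $\E_\rho[|\phi_L^z|^4]\le CL^{d+2}\|\phi_L\|_\infty^2$ instead, and absorb the resulting $N^{-4-2d}L^{2}\|\phi_L\|_\infty^2$ by AM--GM into $N^{-d}\bigl(N^{-4}L^4+N^{-2d}L^{-1}\|\phi_L\|_\infty^4\bigr)$.
\item Your ``main obstacle'' does not arise, because by definition no box in $A$ contains $x$ or $x+e_i$.
\end{itemize}
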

		\begin{proof}
			We make a direct calculation for the second moment:
			\begin{align*}
				&\quad N^2\sum_{i=1}^d\sum_{x}\E_{\rho}\left[\mathcal R_{1,1}^N(f,x,i)^2\right]\\
				&=4N^{-2d}\sum_{i=1}^d\sum_{x}\E_\rho\left[\left\{\sum_{\substack{ z\in \mathcal Z_L }}
				\phi_L^{z}\cdot\pi_{x,x+e_i}\left(\sum_{y:y\notin \Lambda_L^z}\bar\eta^N(y){\nabla_{1}^{N}f\left(\frac{z}{N},\frac{y}{N}\right)}\right)\right\}^2\right]\notag\\
				&\leq  4 d  N^{-2d}\sum_{i,j=1}^d\sum_{x}\sum_{\substack{ z\in \mathcal Z_L}}\E_\rho\left[\left|
				\phi_{L,j}^ z\right|^2\right]\E_\rho\left[\left|\pi_{x,x+e_i}\left(\sum_{y:y\notin \Lambda_L^z}\bar\eta^N(y){\nabla_{1,j}^{N}f\left(\frac{z}{N},\frac{y}{N}\right)}\right)\right|^2\right]\notag,
			\end{align*}
			where the last step follows from independence and Cauchy--Schwarz inequality. For $1\leq j\leq d$ and $k\in\mathbb N_+$, we have
			\begin{equation*}
				\E_\rho\left[\left|\pi_{x,x+e_i}\left(\sum_{y:y\notin \Lambda_L^z}\bar\eta^N(y){\nabla_{1,j}^{N}f\left(\frac{z}{N},\frac{y}{N}\right)}\right)\right|^{2k}\right]\leq \begin{cases}
					0    & x,x+e_i\in \Lambda_L^z,\\
					N^{-2k}C_{f} &x,x+e_i\notin \Lambda_L^z,\\
					C_{f} &\textit{otherwise}.
				\end{cases}
			\end{equation*}
			Therefore, we have
			\begin{multline*}
				N^2\sum_{i=1}^d\sum_{x}\E_{\rho}\left[\mathcal R_{1,1}^N(f,x,i)^2\right]\\\leq C_{f}N^{-2d}L^{d+2}\left(\frac{N}{L}\right)^d\left(N^dN^{-2}+L^{d-1}\right)\leq C_{f}\left(N^{-2}L^2+N^{-d}L^{d+1}\right).
			\end{multline*}
			For the fourth moment of $\mathcal R_{1,1}^N$, we calculate
			\begin{align}\label{eq.expectation.R_{1,1}.fourth}
				&\quad\E_{\rho}\left[\mathcal R_{1,1}^N(f,x,i)^4\right]\\
				&=16N^{-4-4d}\E_\rho\left[\left\{\sum_{\substack{ z\in \mathcal Z_L }}
				\phi_L^{z}\cdot\pi_{x,x+e_i}\left(\sum_{y:y\notin \Lambda_L^z}\bar\eta^N(y){\nabla_{1}^{N}f\left(\frac{z}{N},\frac{y}{N}\right)}\right)\right\}^4\right]\notag\\
				&\leq 16d^{3}N^{-4-4d}\sum_{j=1}^d\E_\rho\left[\left\{\sum_{\substack{ z\in \mathcal Z_L }}
				\phi_{L,j}^{z}\pi_{x,x+e_i}\left(\sum_{y:y\notin \Lambda_L^z}\bar\eta^N(y){\nabla_{1,j}^{N}f\left(\frac{z}{N},\frac{y}{N}\right)}\right)\right\}^4\right]\notag,
			\end{align}
			where the last step follows from Cauchy--Schwarz inequality. We have a quick observation that the expectation can have a non-zero value if the four squares do pair each other. Therefore, we have
			\begin{align}\label{eq.expectation.R_{1,1}.fourth.2}
				&\quad\sum_x\E_\rho\left[\left\{\sum_{\substack{ z\in \mathcal Z_L }}
				\phi_{L,j}^{z}\pi_{x,x+e_i}\left(\sum_{y:y\notin \Lambda_L^z}\bar\eta^N(y){\nabla_{1,j}^{N}f\left(\frac{z}{N},\frac{y}{N}\right)}\right)\right\}^4\right]\\
				&=3\sum_x\sum_{\substack{ z_1,z_2\in \mathcal Z_L\\
						z_1\neq z_2}}\E_\rho\left[\left(\phi_{L,j}^ {z_1}\right)^2\right]\E_\rho\left[\left|\pi_{x,x+e_i}\left(\sum_{y_1:y_1\notin \Lambda_L^{z_1}}\bar\eta^N(y_1){\nabla_{1,j}^{N}f\left(\frac{z_1}{N},\frac{y_1}{N}\right)}\right)\right|^2\right]\notag\\
				&\hspace{2.4cm}\E_\rho\left[\left(\phi_{L,j}^ {z_2}\right)^2\right]\E_\rho\left[\left|\pi_{x,x+e_i}\left(\sum_{y_2:y_2\notin \Lambda_L^{z_2}}\bar\eta^N(y_2){\nabla_{1,j}^{N}f\left(\frac{z_2}{N},\frac{y_2}{N}\right)}\right)\right|^2\right]\notag\\
				&\quad+\sum_{x}\sum_{\substack{ z\in \mathcal Z_L}}\E_\rho\left[\left(
				\phi_{L,j}^ z\right)^4\right]\E_\rho\left[\left|\pi_{x,x+e_i}\left(\sum_{y:y\notin \Lambda_L^z}\bar\eta^N(y){\nabla_{1,j}^{N}f\left(\frac{z}{N},\frac{y}{N}\right)}\right)\right|^4\right]\notag\\
				&\leq C_{f}\left(L^{d+2}\right)^2\left(\left(\frac{N}{L}\right)^{2d}N^d\left(N^{-2}\right)^2+\left(\frac{N}{L}\right)^{2d}L^{d-1}N^{-2}+\left(\frac{N}{L}\right)^{d}L^{d-1}\right)\notag\\
				&\quad+C_{f}\left(\frac{N}{L}\right)^{d}\Vert \phi_L\Vert_\infty^4\left(N^{d}N^{-4}+L^{d-1}\right)\notag\\
				&\leq C_{f}\left(N^{-4+3d}L^4+N^{-2+2d}L^{d+3}+N^{d}L^{2d+3}+\left(N^{-4+2d}L^{-2d}+N^{d}L^{-1}\right)\Vert \phi_L\Vert_\infty^4\right)\notag.
			\end{align}
			Combining \eqref{eq.expectation.R_{1,1}.fourth} and \eqref{eq.expectation.R_{1,1}.fourth.2}, we have
			\begin{multline*}
				N^4\sum_{i=1}^d\sum_x\E_{\rho}\left[\mathcal R_{1,1}^N(f,x,i)^4\right]\\
				\leq C_{f}\left(N^{-4-d}L^4+N^{-2-2d}L^{d+3}+N^{-3d}L^{2d+3}+\left(N^{-4-2d}L^{-2d}+N^{-3d}L^{-1}\right)\Vert \phi_L^z\Vert_\infty^4\right),
			\end{multline*}
			which concludes the proof.
		\end{proof}
		\begin{lemma}\label{lemma.R_{1,2}.expectation}
			For test function $f$, we have
			\begin{equation*}
				N^2\sum_{i=1}^d\sum_x\E_\rho\left[\mathcal R_{1,2}^N(f,x,i)^2 \right]\leq C_{f}N^{-d} L^{d},
			\end{equation*}
			and
			\begin{equation*}
				N^4\sum_{i=1}^d\sum_x\E_\rho\left[\mathcal R_{1,2}^N(f,x,i)^4\right]\leq C_{f}N^{-3d}L^{2d}.
			\end{equation*}
		\end{lemma}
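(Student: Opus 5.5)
We estimate $\mathcal R_{1,2}^N$ by a direct moment computation, exploiting the independence of the Bernoulli product measure. Recall
\[
\mathcal R_{1,2}^N(f,x,i)=2N^{-1-d}\bigl(\bar\eta^N(x)-\bar\eta^N(x+e_i)\bigr)\sum_{\substack{y\in \Lambda_L^{z(x)}\\ y\neq x,x+e_i}}\bar\eta^N(y)\,\nabla_{1,i}^{N}f\Bigl(\frac{x}{N},\frac{y}{N}\Bigr).
\]
Since $f$ is smooth, $|\nabla_{1,i}^N f|\leq C_f$ uniformly. Moreover, $|\bar\eta^N(x)-\bar\eta^N(x+e_i)|\leq 2$.

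For the second moment, bound the prefactor $(\bar\eta^N(x)-\bar\eta^N(x+e_i))^2$ by $4$. The variables $\bar\eta^N(y)$ with $y\neq x,x+e_i$ are independent, centered, and independent of the prefactor. Expanding the square, only the diagonal terms $y=y'$ survive:
\[
\E_\rho\bigl[\mathcal R_{1,2}^N(f,x,i)^2\bigr]\leq C_f N^{-2-2d}\,\chi(\rho)\,|\Lambda_L|\leq C_f N^{-2-2d}L^d .
\]
Multiplying by $N^2$ and summing over the $dN^d$ pairs $(x,i)$ gives $C_fN^{-d}L^d$, which is the first claim.

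For the fourth moment, bound the prefactor to the fourth power by $16$ and expand
\[
\E_\rho\Bigl[\Bigl(\sum_{y}\bar\eta^N(y)a_y\Bigr)^4\Bigr],\qquad a_y:=\nabla_{1,i}^N f\Bigl(\frac{x}{N},\frac{y}{N}\Bigr)\mathbf 1_{y\in\Lambda_L^{z(x)},\,y\neq x,x+e_i}.
\]
The only step requiring care is the Wick-type pairing count. Because the $\bar\eta^N(y)$ are independent and centered, a term is nonzero only when every index appears at least twice. The surviving configurations are therefore either two distinct pairs, contributing at most $3(\sum_y a_y^2)^2\leq C_fL^{2d}$, or all four indices equal, contributing at most $C_f L^d$. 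Hence
\[
\E_\rho\bigl[\mathcal R_{1,2}^N(f,x,i)^4\bigr]\leq C_f N^{-4-4d}L^{2d}.
\]
Multiplying by $N^4$ and summing over the $dN^d$ pairs $(x,i)$ gives $C_fN^{-3d}L^{2d}$, which is the second claim.

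Neither step presents a real obstacle. The point to keep straight is that the prefactor $\bar\eta^N(x)-\bar\eta^N(x+e_i)$ is excluded from the $y$-sum. It can therefore be bounded deterministically rather than paired, so no cross-correlation between the prefactor and the $y$-sum arises.
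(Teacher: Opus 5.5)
Your proof is correct and follows essentially the same approach as the paper. Both arguments expand the moments directly under the product measure: only the diagonal $y=y'$ terms survive in the second moment, and in the fourth moment only paired configurations survive, giving $O(L^{2d}+L^d)$. The one cosmetic difference is how the prefactor $\bar\eta^N(x)-\bar\eta^N(x+e_i)$ is handled: you bound it pointwise, whereas the paper uses independence to factor out its moment.
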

		\begin{proof}
			We make a direct calculation for the second moment:
			\begin{align*}
				&\quad N^2\sum_{i=1}^d\sum_x\E_\rho\left[\mathcal R_{1,2}^N(f,x,i)^2 \right]\\
				&=4N^{-2d}\sum_{i=1}^d\sum_{\substack{ z\in \mathcal Z_L }}\sum_{\substack{x,y\in \Lambda_L^z\\y\neq x,x+e_i}}\E_{\rho}\left[\bar\eta^N(y)^2\right]\E_{\rho}\left[
				\left(\bar\eta^N(x)-\bar\eta^N(x+e_i)\right)^2\right]\nabla_{1,i}^{N}f\left(\frac{x}{N},\frac{y}{N}\right)^2\notag\\
				&\leq C_{f}N^{-2d} \left(\frac{N}{L}\right)^d L^d L^d \notag\\
				&\leq C_{f}N^{-d} L^{d}\notag.
			\end{align*}
			For the fourth moment, we have
			\begin{align*}
				&\quad N^4\sum_{i=1}^d\sum_x\E_\rho\left[\mathcal R_{1,2}^N(f,x,i)^4\right]\\
				&=4\sum_{\substack{ z\in \mathcal Z_L }}\sum_{i=1}^d\sum_{x\in \Lambda_L^z}\E_\rho\left[\left(\bar\eta^N(x)-\bar\eta^N(x+e_i)\right)^4\right]\E_\rho\left[\left\{\sum_{\substack{y\in \Lambda_L^z\\y\neq x,x+e_i}}\bar\eta^N(y)\nabla_{1,i}^{N}f\left(\frac{x}{N},\frac{y}{N}\right)\right\}^4\right]\\
				&\leq C_{f}N^{-4d} \left(\frac{N}{L}\right)^d L^d \left(L^{2d}+L^d\right)\\
				&\leq C_{f}N^{-3d}L^{2d},
			\end{align*}
			where from the second line to the third line, the expectation can be non-zero as long as four $y$'s pair each other.
		\end{proof}
		\begin{lemma}\label{lemma.R_{1,3}.expectation}
			For test function $f$, we have
			\begin{equation*}
				N^2\sum_{i=1}^d\sum_x\E_\rho\left[\mathcal R_{1,3}^N(f,x,i)^2 \right]\leq C_{f}N^{-2} L^2,
			\end{equation*}
			and
			\begin{equation*}
				N^4\sum_{i=1}^d\sum_x\E_\rho\left[\mathcal R_{1,3}^N(f,x,i)^4\right]\leq C_{f}N^{-4-d} L^4.
			\end{equation*}
		\end{lemma}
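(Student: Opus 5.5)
The plan is a direct moment computation, in the same spirit as Lemma~\ref{lemma.R_{1,2}.expectation}. Write
\[
\mathcal R_{1,3}^N(f,x,i)=2N^{-1-d}\left(\bar\eta^N(x)-\bar\eta^N(x+e_i)\right)S(x,i),
\]
where
\[
S(x,i):=\sum_{y\notin \Lambda_L^{z(x)},\,y\neq x+e_i}\bar\eta^N(y)\,h_{x,i}(y),
\qquad
h_{x,i}(y):=\nabla_{1,i}^{N}f\Big(\tfrac{x}{N},\tfrac{y}{N}\Big)-\nabla_{1,i}^{N}f\Big(\tfrac{z(x)}{N},\tfrac{y}{N}\Big).
\]
The key pointwise input is the smoothness of $f$. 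Since $|x-z(x)|\leq CL$, the mean value theorem applied to $\nabla_{1,i}^N f$ (whose first-variable derivative is bounded by $C_f$) gives
\[
\sup_{y}|h_{x,i}(y)|\leq C_f L N^{-1}.
\]

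The next step is decoupling. Because $y\neq x+e_i$ and $y\notin\Lambda_L^{z(x)}\ni x$, every $\bar\eta^N(y)$ in $S$ is independent of $\bar\eta^N(x)-\bar\eta^N(x+e_i)$. Hence, for $k=1,2$,
\[
\E_\rho\big[(\mathcal R_{1,3}^N)^{2k}\big]=(2N^{-1-d})^{2k}\,\E_\rho\big[(\bar\eta^N(x)-\bar\eta^N(x+e_i))^{2k}\big]\,\E_\rho\big[S(x,i)^{2k}\big],
\]
and the first expectation is bounded by a constant.

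For the moments of $S$, the $\bar\eta^N(y)$ are independent and centered. The second moment is therefore $\sum_y \chi(\rho)h_{x,i}(y)^2\leq C_f N^{d}L^2N^{-2}$. For the fourth moment, only pairings and diagonal terms survive, so
\[
\E_\rho[S^4]\leq C\Big(\sum_y h^2\Big)^2+C\sum_y h^4\leq C_f\big(N^{2d}+N^{d}\big)L^4N^{-4}\leq C_f N^{2d}L^4N^{-4}.
\]

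Summing over the $dN^d$ pairs $(x,i)$ gives
\[
N^2\sum_{i,x}\E_\rho\big[(\mathcal R_{1,3}^N)^2\big]\leq C N^2 N^d N^{-2-2d} N^{d}L^2N^{-2}=C_fN^{-2}L^2,
\]
and
\[
N^4\sum_{i,x}\E_\rho\big[(\mathcal R_{1,3}^N)^4\big]\leq C N^4 N^d N^{-4-4d} N^{2d}L^4N^{-4}=C_f N^{-4-d}L^4,
\]
which are the claimed bounds.

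There is no real obstacle. The only point needing care is the uniform Lipschitz bound on $h_{x,i}$. The discrete gradient $\nabla^N_{1,i}f$ is $C^1$ in its first argument with derivative bounded by $\|f\|_{C^2}$, and the torus distance $|x/N-z(x)/N|\leq dL/N$. Both hold uniformly in $N$ and $y$, so the bound is clean.
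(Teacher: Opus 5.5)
Your proposal is correct and follows essentially the same route as the paper's proof. Both factor out the bond variable $\bar\eta^N(x)-\bar\eta^N(x+e_i)$ by independence, use the uniform bound $C_f L/N$ on the difference of discrete gradients, and use the pairing count $N^{2d}+N^d$ for the fourth moment of the sum over $y$, which yields the stated $N^{-2}L^2$ and $N^{-4-d}L^4$ bounds.
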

		\begin{proof}
			We make a direct calculation for the second moment:
			\begin{align*}
				&\quad N^2\sum_{i=1}^d\sum_x\E_\rho\left[\mathcal R_{1,3}^N(f,x,i)^2 \right]\\
				&=4N^{-2d}\sum_{\substack{ z\in \mathcal Z_L }}\sum_{i=1}^d\sum_{x\in \Lambda_L^z}\sum_{\substack{y:y\notin \Lambda_L^z\\y\neq x+e_i}}\E_{\rho}\left[\bar\eta^N(y)^2\right]\E_{\rho}\left[
				\left(\bar\eta^N(x)-\bar\eta^N(x+e_i)\right)^2\right]\\
				&\hspace{6cm}\left\{\nabla_{1,i}^{N}f\left(\frac{x}{N},\frac{y}{N}\right)-{\nabla_{1,i}^{N}f\left(\frac{z}{N},\frac{y}{N}\right)}\right\}^2\notag\\
				&\leq C_{f}N^{-2d} \left(\frac{N}{L}\right)^d L^d N^d \left(\frac{L}{N}\right)^2\notag\\
				&\leq C_{f}N^{-2} L^2\notag.
			\end{align*}
			For the fourth moment, we have
			\begin{align*}
				&\quad N^4\sum_{i=1}^d\sum_x\E_\rho\left[\mathcal R_{1,3}^N(f,x,i)^4\right]\\
				&=16N^{-4d}\sum_{\substack{ z\in \mathcal Z_L }}\sum_{i=1}^d\sum_{x\in \Lambda_L^z}\E_\rho\left[\left(\bar\eta^N(x)-\bar\eta^N(x+e_i)\right)^4\right]\\
				&\qquad\E_\rho\left[\left\{\sum_{\substack{y:y\notin \Lambda_L^z\\y\neq x+e_i}}\bar\eta^N(y)\left(\nabla_{1,i}^{N}f\left(\frac{x}{N},\frac{y}{N}\right)-{\nabla_{1,i}^{N}f\left(\frac{z}{N},\frac{y}{N}\right)}\right)\right\}^4\right]\\
				&\leq C_{f}N^{-4d} \left(\frac{N}{L}\right)^d L^d \left(N^{2d}+N^d\right) \left(\frac{L}{N}\right)^4\notag\\
				&\leq C_{f}N^{-4-d}L^4,
			\end{align*}
			where the second step follows from the observation that the expectation can be non-zero when four $y$'s pair each other.
		\end{proof}
        \begin{proof}[Proof of Lemma~\ref{lemma.R_1.expectation}]
            Combining Lemma~\ref{lemma.R_{1,1}.expectation}, Lemma~\ref{lemma.R_{1,2}.expectation}, Lemma~\ref{lemma.R_{1,3}.expectation} and Cauchy--Schwarz inequality, we get the bound for the second and fourth moment for $\mathcal R_1^N$ in Lemma~\ref{lemma.R_1.expectation} immediately.
        \end{proof}
		
		Next, we give the detailed calculation for the second moment of $\mathcal R_2^N$, which is used in the proof of Proposition~\ref{prop.limit.quadratic.variation} in Section~\ref{sec.M.limit}.
		
		\begin{lemma}\label{lemma.R_2.expectation}
			For test function $f$, we have
			\begin{equation*}
				\quad N^2\sum_{i=1}^d\sum_{x}\E_\rho\left[\mathcal R_{2}^N(f,x,i)^2\right]\leq C_fN^{-\gamma},
			\end{equation*}
			for some $\gamma>0$.
		\end{lemma}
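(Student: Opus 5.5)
We need $N^2\sum_{i,x}\E_\rho[\mathcal R_2^N(f,x,i)^2]\le C_fN^{-\gamma}$, where $\mathcal R_2^N=\mathcal I_2^N-\pi_{x,x+e_i}\mathcal Q^N(f)$. The plan is to write $\pi_{x,x+e_i}\mathcal Q^N(f)=\mathcal I_1^N+\mathcal R_1^N$ as in \eqref{eq.pi.Q}. This splits the error as
\[
\mathcal R_2^N=\left(\mathcal I_2^N-\mathcal I_1^N\right)-\mathcal R_1^N .
\]
The $\mathcal R_1^N$ contribution is already controlled by Lemma~\ref{lemma.R_1.expectation}. It gives $C_f(N^{-2}L^2+N^{-d}L^{d+2})$, which is a negative power of $N$ under Hypothesis~\ref{hyp}. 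It therefore remains to bound $N^2\sum_{i,x}\E_\rho[(\mathcal I_2^N-\mathcal I_1^N)^2]$.

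Next I decompose $\mathcal I_2^N-\mathcal I_1^N$ according to the two ways $\mathcal I_1^N$ and $\mathcal I_2^N$ differ.

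\emph{Near-diagonal part.} The sum in $\mathcal I_2^N$ runs over all $y$, while $\mathcal I_1^N$ excludes $y\in\Lambda_L^{z(x)}\cup\{x,x+e_i\}$. This gives a term
\[
2N^{-1-d}\sum_{y\in\Lambda_L^{z(x)}\cup\{x,x+e_i\}}\bar\eta^N(y)\,\mathbf v_{x,i}\cdot\nabla_1^Nf\left(\frac{x}{N},\frac{y}{N}\right).
\]
I bound it crudely by Cauchy--Schwarz over at most $L^d+2$ values of $y$, using $|\mathbf v_{x,i}|\le C\|\phi_L\|_\infty$. Summing over $x$ yields a bound of order $N^{-2d}N^dL^{2d}\|\phi_L\|_\infty^2\le CN^{-d}L^{4d+4}(\log L)^2$ by Lemma~\ref{lem.Homogenization}~\eqref{lem.CorrectorLp}. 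This is $N^{-\gamma}$ by Hypothesis~\ref{hyp}.

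\emph{Slope part.} On $y\notin\Lambda_L^{z(x)}$ the test functions differ: one is evaluated at $x/N$, the other at $z(x)/N$. The difference is
\[
2N^{-1-d}\sum_{y\notin\Lambda_L^{z(x)}}\bar\eta^N(y)\,\mathbf v_{x,i}\cdot\left\{\nabla_1^Nf\left(\frac{x}{N},\frac{y}{N}\right)-\nabla_1^Nf\left(\frac{z(x)}{N},\frac{y}{N}\right)\right\},
\]
and the bracket is $O(L/N)$. Since $\mathbf v_{x,i}$ is $\F_{\Lambda_L^{z(x)}\cup\{x+e_i\}}$-measurable, I use independence from $\bar\eta^N(y)$ for $y$ outside this set, exactly as in \eqref{eq.I_1.expectation.square.1}, handling the single extra site $y=x+e_i$ separately. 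This gives $C_fN^{-d}(L/N)^2\sum_{z}\sum_{x\in\Lambda_L^z}\E_\rho[|\mathbf v_{x,i}|^2]$. By \eqref{eq.I_1.expectation.square.2} the last sum is $\le CN^d$, so the slope part is $O(L^2N^{-2})$.

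The main care point is the near-diagonal block. There I lose independence between $\bar\eta^N(y)$ and $\mathbf v_{x,i}$, so I can only use the $L^\infty$ corrector bound, not the $L^2$ bound. The polynomial loss $L^{2d+4}$ is absorbed only thanks to the strong assumption $L^{100}\ll N$. One could instead use the $L^2$ bound for cross terms with $y\ne y'$, but it is not needed. Collecting the three pieces with $(a+b+c)^2\le3(a^2+b^2+c^2)$ gives the claim with some $\gamma>0$.
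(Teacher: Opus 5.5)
Your proof is correct and is essentially the paper's argument. The paper compares $\mathcal I_2^N$ with $\pi_{x,x+e_i}\mathcal Q^N(f)$ directly, giving $\mathcal R_2^N=-\mathcal R_{1,1}^N+\mathcal R_{2,1}^N+\mathcal R_{2,2}^N+\mathcal R_{2,3}^N$; it bounds the near-diagonal corrector term by $C_fN^{-1-d}L^d\Vert\phi_L\Vert_\infty$ exactly as you do and reuses Lemma~\ref{lemma.R_{1,1}.expectation}. Your detour through $\mathcal I_1^N$ only adds $\mathcal R_{1,2}^N,\mathcal R_{1,3}^N$, which cancel against the $\pi_{x,x+e_i}\ell_{e_j}$-components of your near-diagonal and slope parts (leaving just $\mathcal R_{2,1}^N$), so bounding them separately is harmless under Hypothesis~\ref{hyp}, and your energy-based $O(L^2N^{-2})$ bound on the slope part is slightly sharper than the paper's $O(L^2N^{-2}\Vert\phi_L\Vert_\infty^2)$ for $\mathcal R_{2,3}^N$.
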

        The remainder consists of four terms:
		\begin{equation*}
			\mathcal R_{2}^N(f,x,i)=-\mathcal R_{1,1}^N(f,x,i)+\mathcal R_{2,1}^N(f,x,i)+\mathcal R_{2,2}^N(f,x,i)+\mathcal R_{2,3}^N(f,x,i).
		\end{equation*}
		The three new remainder terms are as follows.
		\begin{align*}
			\mathcal R_{2,1}^N(f,x,i)&=2N^{-1-d}\left(\eta^N(x)-\eta^N(x+e_i)\right)\\
			&\hspace{2.9cm}\left(\bar\eta^N(x)\nabla^N_{1,i}f\left(\frac{x}{N},\frac{x}{N}\right)+\bar\eta^N(x+e_i)\nabla^N_{1,i}f\left(\frac{x}{N},\frac{x+e_i}{N}\right)\right),\notag\\
			\mathcal R_{2,2}^N(f,x,i)&=2N^{-1-d}\sum_{y\in \Lambda_L^{z(x)}\cup\{x+e_i\}}\bar\eta^N(y)\nabla^N_{1}f\left(\frac{x}{N},\frac{y}{N}\right)\cdot\pi_{x,x+e_i}\phi_L^{z(x)},\\
			\mathcal R_{2,3}^N(f,x,i)&=2N^{-1-d}\sum_{\substack{y:y\neq x+e_i\\y\notin \Lambda_L^{z(x)}}}\bar\eta^N(y)\left\{\nabla^N_{1}f\left(\frac{x}{N},\frac{y}{N}\right)-{\nabla^N_{1}f\left(\frac{z(x)}{N},\frac{y}{N}\right)}\right\}\cdot\pi_{x,x+e_i}\phi_L^{z(x)}.
		\end{align*}
		\begin{proof}
			From the definition, it is easy to see that for each $1\leq i\leq d$ and $x\in \Td_N$,
			\begin{equation}\label{eq.R_{2.1}.bound}
				\left|\mathcal R_{2,1}^N(f,x,i)\right|\leq C_{f}N^{-1-d},
			\end{equation}
			and
			\begin{equation}\label{eq.R_{2.2}.bound}
				\left|\mathcal R_{2,2}^N(f,x,i)\right|\leq C_{f}N^{-1-d}L^d\Vert\phi_L\Vert_\infty.
			\end{equation}
			We calculate for test function $f$,
			\begin{align}\label{eq.R_{2.3}.bound}
				&\quad N^2\sum_{i=1}^d\sum_{x}\E_\rho\left[\mathcal R_{3,3}^N(f,x,i)^2\right]\\
				&\leq dN^{-2d}\sum_{z\in \mathcal Z_L}\sum_{i,j=1}^d\sum_{x\in \Lambda_L^z}\sum_{\substack{y:y\notin \Lambda_L^{z}\\y\neq x+e_i}}\E_{\rho}\left[\bar\eta^N(y)^2\right]\E_\rho\left[\left(\pi_{x,x+e_i}\phi_{L,j}^{z}\right)^2\right]\notag\\
				&\hspace{4cm}\left\{\nabla^N_{1,j}f\left(\frac{x}{N},\frac{y}{N}\right)-{\nabla^N_{1,j}f\left(\frac{z}{N},\frac{y}{N}\right)}\right\}^2\notag\\
				&\leq C_{f}N^{-2d}N^{d}N^d\Vert\phi_L\Vert_\infty^2\left(\frac{L}{N}\right)^2\notag\\
				&\leq N^{-2}L^2\Vert\phi_L\Vert_\infty^2\notag.
			\end{align}
			Combining \eqref{eq.R_{2.1}.bound}-\eqref{eq.R_{2.3}.bound}, Lemma~\ref{lemma.R_{1,1}.expectation} and Cauchy-Schwarz inequality, we conclude the proof.
			
		\end{proof}
		
		Finally, we give two detailed calculations, which are used in the proof of Proposition~\ref{prop.replacement} in Section~\ref{sec.A.limit}.
		
		\begin{lemma}\label{lemma.minus.expectation}
			For test function $f$, we have
			\begin{equation*}
				N^2\sum_{i=1}^d\sum_x\E_\rho\left[\left(\mathcal I_{1}^N(f,x,i)-\mathcal I_3^N(f,x,i)\right)^2 \right]\leq C_{f}N^{-2} L^2.
			\end{equation*}
            and 
            \begin{equation*}
				N^2\sum_{i=1}^d\sum_x\E_\rho\left[\left(\mathcal J_2^N(f,x,i)-\mathcal J_1^N(f,x,i)\right)^2 \right]\leq C_{f}\left(N^{-2} L^2+N^{-d}L^d\right).
			\end{equation*}
		\end{lemma}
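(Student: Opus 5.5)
The plan is to expand both differences explicitly, use the product structure of $\Pr$ and the measurability in (\ref{lem.Correctorlocal}) of Lemma~\ref{lem.Homogenization} to kill every cross term in the second moment, and then count sites. This is the same scheme as Lemmas~\ref{lemma.R_{1,2}.expectation} and \ref{lemma.R_{1,3}.expectation}.

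\textbf{First estimate.} Both $\mathcal I_1^N$ and $\mathcal I_3^N$ carry the same vector $\mathbf v_{x,i}$ and the same slope $\nabla_1^N f(z(x)/N,\cdot)$. They differ only in the range of $y$, so with $z=z(x)$,
\begin{equation*}
\mathcal I_1^N(f,x,i)-\mathcal I_3^N(f,x,i)=2N^{-1-d}\sum_{y\in A_x}\bar\eta^N(y)\,\mathbf v_{x,i}\cdot\nabla_1^N f\Bigl(\frac{z}{N},\frac{y}{N}\Bigr),\qquad A_x:=\Lambda_{2L}^{z}\setminus\bigl(\Lambda_L^{z}\cup\{x+e_i\}\bigr).
\end{equation*}
By (\ref{lem.Correctorlocal}) of Lemma~\ref{lem.Homogenization}, $\mathbf v_{x,i}$ is measurable with respect to $\F_{(\Lambda_L^z)^-\cup\{x,x+e_i\}}$. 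Every $y\in A_x$ lies outside this set, so $\bar\eta^N(y)$ is independent of $\mathbf v_{x,i}$, and distinct $y$'s are orthogonal. The second moment therefore collapses to a diagonal sum:
\begin{equation*}
\E_\rho\Bigl[\bigl(\mathcal I_1^N-\mathcal I_3^N\bigr)^2\Bigr]\le 4dN^{-2-2d}\chi(\rho)\,\|\nabla f\|_\infty^2\,|A_x|\,\E_\rho\bigl[|\mathbf v_{x,i}|^2\bigr].
\end{equation*}
Next use $|A_x|\le CL^d$ and the block energy bound \eqref{eq.I_1.expectation.square.2}, namely $\sum_i\sum_{x\in\Lambda_L^z}\E_\rho|\mathbf v_{x,i}|^2\le CL^d$. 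Summing over the $(N/L)^d$ blocks then gives
\begin{equation*}
N^2\sum_{i,x}\E_\rho\bigl[(\mathcal I_1^N-\mathcal I_3^N)^2\bigr]\le C_fN^{-2d}\,L^d\,(N/L)^d L^d=C_f(L/N)^d.
\end{equation*}
For $d\ge 2$ we have $L/N\le 1$, so $(L/N)^d\le (L/N)^2=N^{-2}L^2$, which is the stated bound.

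\textbf{Second estimate.} Write $\mathcal J_2^N-\mathcal J_1^N=T_1+T_2$ with the common factor $2N^{-1-d}\mathbf D_{ij}(\rho)(\bar\eta^N(x)-\bar\eta^N(x+e_i))$.
\begin{itemize}
\item $T_1$ is the slope change: $\bar\eta^N(y)\bigl\{\nabla_{1,j}^Nf(z(x)/N,y/N)-\nabla_{1,j}^Nf(x/N,y/N)\bigr\}$ summed over $y\notin\Lambda_{2L}^{z(x)}$. The bracket is $O(L/N)$ by smoothness of $f$.
\item $T_2$ is the near-diagonal part: $-\bar\eta^N(y)\nabla_{1,j}^Nf(x/N,y/N)$ summed over $y\in\Lambda_{2L}^{z(x)}\setminus\{x,x+e_i\}$.
\end{itemize}
In both terms $y\ne x,x+e_i$, so the factor $(\bar\eta^N(x)-\bar\eta^N(x+e_i))$ is independent of the $\bar\eta^N(y)$'s, and the $y$'s are mutually orthogonal. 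Counting sites exactly as in Lemma~\ref{lemma.R_{1,3}.expectation} gives $N^{-2d}N^dN^d(L/N)^2=N^{-2}L^2$ for $T_1$. Counting as in Lemma~\ref{lemma.R_{1,2}.expectation} gives $N^{-2d}N^dL^d=N^{-d}L^d$ for $T_2$. Combining the two with $(a+b)^2\le 2a^2+2b^2$ yields $C_f(N^{-2}L^2+N^{-d}L^d)$.

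\textbf{Main obstacle.} The obstacle is the first estimate when $d=1$. There the diagonal count only gives $C_fL/N$, and the independence argument cannot improve it: there is no sign cancellation in $\bar\eta^N(y)^2|\mathbf v_{x,i}|^2$. I will first check whether the statement is meant for $d\ge2$, or whether $\Lambda_{2L}$ in the definition of $\mathcal I_3^N$ can be taken with the excluded region of $\mathcal I_1^N$ already enlarged, so that $A_x$ is empty. If neither holds, I will record the bound $C_f(N^{-2}L^2+N^{-d}L^d)$ for the first estimate too. This weaker form still tends to zero as a power of $N$ under Hypothesis~\ref{hyp}, which is all that Step~1.2 of Proposition~\ref{prop.replacement} uses.
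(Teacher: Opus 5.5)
Your proposal is correct and follows the paper's own scheme. Independence of the $\bar\eta^N(y)$ from $\mathbf v_{x,i}$ (respectively from $\bar\eta^N(x)-\bar\eta^N(x+e_i)$) collapses each second moment to a diagonal sum, and the block bound \eqref{eq.I_1.expectation.square.2} controls $\mathbf v_{x,i}$. Your $T_1,T_2$ are, up to sign, the paper's $R_2^N,R_1^N$, with the same counts $N^{-2}L^2$ and $N^{-d}L^d$.

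The only divergence is in the first count. The paper sums $y$ over the thin shell $\Lambda^z_{L+2\mathbf r}\setminus\Lambda^z_L$ and reports $C_fN^{-d}L^{d-1}$. That shell does not match the annulus $\Lambda^z_{2L}\setminus\Lambda^z_L$ dictated by the definition of $\mathcal I_3^N$, and the paper's bound still misses $N^{-2}L^2$ when $d=1$.

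Your count $C_f(L/N)^d$ is the right one. Your $d=1$ caveat is genuine: the diagonal reduction is an identity, so for generic $f$ the left side is then of order $L/N$, not $O(L^2/N^2)$. The power decay in $N$ that you do obtain is all that Step~1.2 of Proposition~\ref{prop.replacement} uses.
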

		\begin{proof}
			We make a direct calculation for the second moment of $\mathcal I_{1}^N(f,x,i)-\mathcal I_3^N(f,x,i)$:
			\begin{align*}
				&\quad N^2\sum_{i=1}^d\sum_x\E_\rho\left[\left(\mathcal I_{1}^N(f,x,i)-\mathcal I_3^N(f,x,i)\right)^2 \right]\\
				&=N^2\sum_{i=1}^d\sum_x\E_\rho\left[\left(2N^{-1-d}\sum_{y\in \Lambda_{2L}^{z(x)}\backslash \left(\Lambda_{L}^{z(x)}\cup\{x+e_i\}\right)}\bar\eta^N(y)
				\mathbf{v}_{x,i}\cdot{\nabla_{1}^{N}f\left(\frac{z(x)}{N},\frac{y}{N}\right)}\right)^2 \right]\\
				&=4N^{-2d}\sum_{\substack{ z\in \mathcal Z_L }}\sum_{i=1}^d\sum_{x\in \Lambda_L^z}\sum_{y\in \Lambda_{L+2\mathbf{r}}^{z}\backslash \left(\Lambda_{L}^{z}\cup\{x+e_i\}\right)}\E_{\rho}\left[\bar\eta^N(y)^2\right]\E_{\rho}\left[
				\left(\mathbf{v}_{x,i}\cdot{\nabla_{1}^{N}f\left(\frac{z}{N},\frac{y}{N}\right)}\right)^2\right]\\
				&\leq 4N^{-2d}\sum_{\substack{ z\in \mathcal Z_L }}\sum_{y\in \Lambda_{L+2\mathbf{r}}^{z}\backslash \left(\Lambda_{L}^{z}\right)}\E_{\rho}\left[\bar\eta^N(y)^2\right]\sum_{i=1}^d\sum_{x\in \Lambda_L^z}\E_{\rho}\left[
				\left(\mathbf{v}_{x,i}\cdot{\nabla_{1}^{N}f\left(\frac{z}{N},\frac{y}{N}\right)}\right)^2\right]\\
				&\leq C_fN^{-d}L^{d-1}.
			\end{align*}
			For term $\mathcal J_1^N(f,x,i)-\mathcal J_2^N(f,x,i)$, we have the following decomposition
			\begin{align*}
				\mathcal J_1^N(f,x,i)-\mathcal J_2^N(f,x,i)=R_1^N(f,x,i)+R_2^N(f,x,i),
			\end{align*}
			where
			\begin{align*}
				&R_1^N(f,x,i):=2N^{-1-d}\sum_{j=1}\sum_{\substack{y\in \Lambda_{2L}^{z(x)}\\y\neq x,x+e_i}}\mathbf D_{ij}(\rho)\left(\bar\eta^N(x)-\bar\eta^N(x+e_i)\right)\bar\eta^N(y)\nabla_{1,j}^{N}f\left(\frac{x}{N},\frac{y}{N}\right),\\
				&R_2^N(f,x,i):=2N^{-1-d}\sum_{j=1}\sum_{\substack{y:y\notin \Lambda_{2L}^{z(x)}}}\mathbf D_{ij}(\rho)\left(\bar\eta^N(x)-\bar\eta^N(x+e_i)\right)\bar\eta^N(y)\\
				&\hspace{7cm}\left\{\nabla_{1,j}^{N}f\left(\frac{x}{N},\frac{y}{N}\right)-{\nabla_{1,j}^{N}f\left(\frac{z(x)}{N},\frac{y}{N}\right)}\right\}.
			\end{align*}
			Therefore, we have
			\begin{align*}
				&\quad \E_\rho\left[R_1^N(f,x,i)^2 \right]\\
				&=4N^{-2-2d}\sum_{j=1}^d\sum_{\substack{y\in \Lambda_{2L}^{z(x)}\\y\neq x,x+e_j}}
				\mathbf{D}_{ij}^2(\rho)\E_{\rho}\left[\bar\eta^N(y)^2\right]\E_{\rho}\left[\left(\bar\eta^N(x)-\bar\eta^N(x+e_i)\right)^2\right]\nabla_{1,j}^{N}f\left(\frac{x}{N},\frac{y}{N}\right)^2\\
				&\leq C_fN^{-2-2d}L^d,
			\end{align*}
			and
			\begin{align*}
				&\quad \E_\rho\left[R_2^N(f,x,i)^2 \right]\\
				&\leq 4N^{-2-2d}\sum_{j=1}^d\sum_{\substack{y:y\notin \Lambda_{2L}^{z(x)}}}
				\mathbf{D}_{ij}^2(\rho)\E_{\rho}\left[\bar\eta^N(y)^2\right]\E_{\rho}\left[\left(\bar\eta^N(x)-\bar\eta^N(x+e_i)\right)^2\right]\\
				&\hspace{7.7cm}\left\{\nabla_{1,j}^{N}f\left(\frac{x}{N},\frac{y}{N}\right)-{\nabla_{1,j}^{N}f\left(\frac{z(x)}{N},\frac{y}{N}\right)}\right\}^2\\
				&\leq C_fN^{-4-d}L^2.
			\end{align*}
			Finally, we sum over $1\leq i\leq d$ and $x\in \Td_N$ to conclude.
		\end{proof}
		
	\section*{Acknowledgements}
	This research is supported by the National Key R\&D Program of China (No. 2023YFA1010400) and NSFC (No. 12301166). We thank Claudio Landim for the comments on the preliminary version of the manuscript.
	
	\bibliographystyle{plain}
	\bibliography{KawasakiRef}

@preamble{"\def\cprime{$'$} "}

@article{FGW24,
  author  = {Funaki, Tadahisa and Gu, Chenlin and Wang, Han},
  title   = {{Quantitative Homogenization and Hydrodynamic Limit of Nongradient Exclusion Process}},
  journal = {Communications on Pure and Applied Mathematics},
  year    = {2024},
  pages   = {e70034}
}

@article {GJ19,
    AUTHOR = {Goncalves, Patr\'icia and Jara, Milton},
     TITLE = {Quadratic fluctuations of the symmetric simple exclusion},
   JOURNAL = {ALEA Lat. Am. J. Probab. Math. Stat.},
  FJOURNAL = {ALEA. Latin American Journal of Probability and Mathematical
              Statistics},
    VOLUME = {16},
      YEAR = {2019},
    NUMBER = {1},
     PAGES = {605--632},
      ISSN = {1980-0436},
   MRCLASS = {60K35 (35K10 35R60)},
  MRNUMBER = {3949272},
MRREVIEWER = {Adrian\ Muntean},
       DOI = {10.30757/alea.v16-22},
       URL = {https://doi.org/10.30757/alea.v16-22},
}

@incollection{FUY96,
  title={Hydrodynamic limit for lattice gas reversible under Bernoulli measures},
  author={Funaki, Tadahisa and Uchiyama, Kohei and Yau, Horng-Tzer},
  booktitle={Nonlinear Stochastic PDEs: Hydrodynamic Limit and Burgers’ Turbulence},
  pages={1--40},
  year={1996},
  publisher={Springer}
}

@preamble{"\newcommand{\noop}[1]{} "}

@incollection {fun96,
	AUTHOR = {Funaki, T.},
	TITLE = {Equilibrium fluctuations for lattice gas},
	BOOKTITLE = {It\^{o}'s stochastic calculus and probability theory},
	PAGES = {63--72},
	PUBLISHER = {Springer, Tokyo},
	YEAR = {1996},
	MRCLASS = {60K35 (60F05 82C20 82C22)},
	MRNUMBER = {1439518},
	MRREVIEWER = {Timo Sepp\"{a}l\"{a}inen},
}

@book{kipnis1998scaling,
	AUTHOR = {Kipnis, Claude and Landim, Claudio},
	TITLE = {Scaling limits of interacting particle systems},
	SERIES = {Grundlehren der Mathematischen Wissenschaften},
	VOLUME = {320},
	PUBLISHER = {Springer-Verlag, Berlin},
	YEAR = {1999},
	PAGES = {xvi+442},
	ISBN = {3-540-64913-1},
	MRCLASS = {60-02 (60F05 60F10 60K35 82B05 82C22)},
	MRNUMBER = {1707314},
	MRREVIEWER = {Timo Sepp\"{a}l\"{a}inen},
	DOI = {10.1007/978-3-662-03752-2},
	URL = {https://doi.org/10.1007/978-3-662-03752-2},
}

@book{komorowski2012fluctuations,
	AUTHOR = {Komorowski, Tomasz and Landim, Claudio and Olla, Stefano},
	TITLE = {Fluctuations in {M}arkov processes},
	SERIES = {Grundlehren der Mathematischen Wissenschaften},
	VOLUME = {345},
	PUBLISHER = {Springer, Heidelberg},
	YEAR = {2012},
	PAGES = {xviii+491},
	ISBN = {978-3-642-29879-0},
	MRCLASS = {60J25 (60F05 60G44 60J10 60J60 60K35 60K37)},
	MRNUMBER = {2952852},
	MRREVIEWER = {B\'{a}lint T\'{o}th},
	DOI = {10.1007/978-3-642-29880-6},
	URL = {https://doi.org/10.1007/978-3-642-29880-6},
}

@article{gu2025relaxation,
  title={Relaxation to equilibrium of conservative dynamics II: non-gradient exclusion processes},
  author={Gu, Chenlin and Yang, Linzhi},
  journal={arXiv preprint arXiv:2509.20797},
  year={2025}
}

@article {Mitoma1983,
    AUTHOR = {Mitoma, Itaru},
     TITLE = {Tightness of probabilities on {$C([0,1];{\mathcal S}\sp{\prime}
              )$}\ and {$D([0,1];{\mathcal S}\sp{\prime} )$}},
   JOURNAL = {Ann. Probab.},
  FJOURNAL = {The Annals of Probability},
    VOLUME = {11},
      YEAR = {1983},
    NUMBER = {4},
     PAGES = {989--999},
      ISSN = {0091-1798,2168-894X},
   MRCLASS = {60B11 (60B12)},
  MRNUMBER = {714961},
MRREVIEWER = {Peter\ Z.\ Daffer},
       URL =
              {http://links.jstor.org/sici?sici=0091-1798(198311)11:4<989:TOPOA>2.0.CO;2-P&origin=MSN},
}

@book {Limit_theorems_for_stochastic_processes,
    AUTHOR = {Jacod, Jean and Shiryaev, Albert N.},
     TITLE = {Limit theorems for stochastic processes},
    SERIES = {Grundlehren der mathematischen Wissenschaften [Fundamental
              Principles of Mathematical Sciences]},
    VOLUME = {288},
   EDITION = {Second},
 PUBLISHER = {Springer-Verlag, Berlin},
      YEAR = {2003},
     PAGES = {xx+661},
      ISBN = {3-540-43932-3},
   MRCLASS = {60-02 (60F17 60G48 60H05)},
  MRNUMBER = {1943877},
MRREVIEWER = {Dominique\ L\'epingle},
       DOI = {10.1007/978-3-662-05265-5},
       URL = {https://doi.org/10.1007/978-3-662-05265-5},
}

@book {Convergence_of_probability_measures,
    AUTHOR = {Billingsley, Patrick},
     TITLE = {Convergence of probability measures},
    SERIES = {Wiley Series in Probability and Statistics: Probability and
              Statistics},
   EDITION = {Second},
      NOTE = {A Wiley-Interscience Publication},
 PUBLISHER = {John Wiley \& Sons, Inc., New York},
      YEAR = {1999},
     PAGES = {x+277},
      ISBN = {0-471-19745-9},
   MRCLASS = {60B10 (28A33 60F17)},
  MRNUMBER = {1700749},
       DOI = {10.1002/9780470316962},
       URL = {https://doi.org/10.1002/9780470316962},
}

@incollection{LyonsZheng1988,
  author    = {Lyons, Terry J. and Zheng, Weian},
  title     = {A Crossing Estimate for the Canonical Process on a Dirichlet Space and a Tightness Result},
  booktitle = {Colloque Paul L{\'e}vy sur les Processus Stochastiques},
  series    = {Ast{\'e}risque},
  volume    = {157--158},
  pages     = {249--271},
  year      = {1988},
  publisher = {Soci{\'e}t{\'e} Math{\'e}matique de France}
}
\end{document}